\newtheorem{theorem}{Theorem}[section]
\newtheorem{conjecture}[theorem]{Conjecture}
\newtheorem{corollary}[theorem]{Corollary}
\newtheorem{lemma}[theorem]{Lemma}
\newtheorem{proposition}[theorem]{Proposition}
\theoremstyle{definition}
\newtheorem{definition}[theorem]{Definition}
\newtheorem{example}[theorem]{Example}
\newtheorem{remark}[theorem]{Remark}
\renewenvironment{proof}[1][Proof]{\noindent\textbf{#1.} }{\ \rule{0.5em}{0.5em}}
\newenvironment{acknowledgement}{\smallskip{\sc Acknowledgement.}\rm}{\smallskip}
\renewcommand{\theequation}{\thesection.\arabic{equation}}
\let\pdfoutput=\undefined\fi
\chardef\@x10\chardef\@xv60
\def\tcitime{
\def\@time{%
  \@minute\time\@hour\@minute\divide\@hour\@xv
  \ifnum\@hour<\@x 0\fi\the\@hour:%
  \multiply\@hour\@xv\advance\@minute-\@hour
  \ifnum\@minute<\@x 0\fi\the\@minute
  }}%
\def\x@hyperref#1#2#3{%
   \catcode`\~ = 12
   \catcode`\$ = 12
   \catcode`\_ = 12
   \catcode`\# = 12
   \catcode`\& = 12
   \catcode`\% = 12
   \y@hyperref{#1}{#2}{#3}%
}
\def\y@hyperref#1#2#3#4{%
   #2\ref{#4}#3
   \catcode`\~ = 13
   \catcode`\$ = 3
   \catcode`\_ = 8
   \catcode`\# = 6
   \catcode`\& = 4
   \catcode`\% = 14
}
\def\QCTOpt[#1]#2{%
  \def\QCTOptB{#1}
  \def\QCTOptA{#2}
}
\def\QCTNOpt#1{%
  \def\QCTOptA{#1}
  \let\QCTOptB\empty
}
\def\Qct{%
  \@ifnextchar[{%
    \QCTOpt}{\QCTNOpt}
}
\def\QCBOpt[#1]#2{%
  \def\QCBOptB{#1}%
  \def\QCBOptA{#2}%
}
\def\QCBNOpt#1{%
  \def\QCBOptA{#1}%
  \let\QCBOptB\empty
}
\def\Qcb{%
  \@ifnextchar[{%
    \QCBOpt}{\QCBNOpt}%
}
\def\PrepCapArgs{%
  \ifx\QCBOptA\empty
    \ifx\QCTOptA\empty
      {}%
    \else
      \ifx\QCTOptB\empty
        {\QCTOptA}%
      \else
        [\QCTOptB]{\QCTOptA}%
      \fi
    \fi
  \else
    \ifx\QCBOptA\empty
      {}%
    \else
      \ifx\QCBOptB\empty
        {\QCBOptA}%
      \else
        [\QCBOptB]{\QCBOptA}%
      \fi
    \fi
  \fi
}
\def\GRAPHICSPS#1{%
 \ifcase\GRAPHICSTYPE
   \special{ps: #1}%
 \or
   \special{language "PS", include "#1"}%
 \fi
}%
\def\graffile#1#2#3#4{%
    \bgroup
	   \@inlabelfalse
       \leavevmode
       \@ifundefined{bbl@deactivate}{\def~{\string~}}{\activesoff}%
        \raise -#4 \BOXTHEFRAME{%
           \hbox to #2{\raise #3\hbox to #2{\null #1\hfil}}}%
    \egroup
}%
\def\draftbox#1#2#3#4{%
 \leavevmode\raise -#4 \hbox{%
  \frame{\rlap{\protect\tiny #1}\hbox to #2%
   {\vrule height#3 width\z@ depth\z@\hfil}%
  }%
 }%
}%
\let\nographics=\@msidraft
\newif\ifwasdraft
\def\GRAPHIC#1#2#3#4#5{%
   \ifnum\@msidraft=\@ne\draftbox{#2}{#3}{#4}{#5}%
   \else\graffile{#1}{#3}{#4}{#5}%
   \fi
}
\def\addtoLaTeXparams#1{%
    \edef\LaTeXparams{\LaTeXparams #1}}%
\newif\ifBoxFrame \BoxFramefalse
\newif\ifOverFrame \OverFramefalse
\newif\ifUnderFrame \UnderFramefalse
\def\BOXTHEFRAME#1{%
   \hbox{%
      \ifBoxFrame
         \frame{#1}%
      \else
         {#1}%
      \fi
   }%
}
\def\doFRAMEparams#1{\BoxFramefalse\OverFramefalse\UnderFramefalse\readFRAMEparams#1\end}%
\def\readFRAMEparams#1{%
 \ifx#1\end%
  \let\next=\relax
  \else
  \ifx#1i\dispkind=\z@\fi
  \ifx#1d\dispkind=\@ne\fi
  \ifx#1f\dispkind=\tw@\fi
  \ifx#1t\addtoLaTeXparams{t}\fi
  \ifx#1b\addtoLaTeXparams{b}\fi
  \ifx#1p\addtoLaTeXparams{p}\fi
  \ifx#1h\addtoLaTeXparams{h}\fi
  \ifx#1X\BoxFrametrue\fi
  \ifx#1O\OverFrametrue\fi
  \ifx#1U\UnderFrametrue\fi
  \ifx#1w
    \ifnum\@msidraft=1\wasdrafttrue\else\wasdraftfalse\fi
    \@msidraft=\@ne
  \fi
  \let\next=\readFRAMEparams
  \fi
 \next
 }%
\def\IFRAME#1#2#3#4#5#6{%
      \bgroup
      \let\QCTOptA\empty
      \let\QCTOptB\empty
      \let\QCBOptA\empty
      \let\QCBOptB\empty
      #6%
      \parindent=0pt
      \leftskip=0pt
      \rightskip=0pt
      \setbox0=\hbox{\QCBOptA}%
      \@tempdima=#1\relax
      \ifOverFrame
          \typeout{This is not implemented yet}%
          \show\HELP
      \else
         \ifdim\wd0>\@tempdima
            \advance\@tempdima by \@tempdima
            \ifdim\wd0 >\@tempdima
               \setbox1 =\vbox{%
                  \unskip\hbox to \@tempdima{\hfill\GRAPHIC{#5}{#4}{#1}{#2}{#3}\hfill}%
                  \unskip\hbox to \@tempdima{\parbox[b]{\@tempdima}{\QCBOptA}}%
               }%
               \wd1=\@tempdima
            \else
               \textwidth=\wd0
               \setbox1 =\vbox{%
                 \noindent\hbox to \wd0{\hfill\GRAPHIC{#5}{#4}{#1}{#2}{#3}\hfill}\\%
                 \noindent\hbox{\QCBOptA}%
               }%
               \wd1=\wd0
            \fi
         \else
            \ifdim\wd0>0pt
              \hsize=\@tempdima
              \setbox1=\vbox{%
                \unskip\GRAPHIC{#5}{#4}{#1}{#2}{0pt}%
                \break
                \unskip\hbox to \@tempdima{\hfill \QCBOptA\hfill}%
              }%
              \wd1=\@tempdima
           \else
              \hsize=\@tempdima
              \setbox1=\vbox{%
                \unskip\GRAPHIC{#5}{#4}{#1}{#2}{0pt}%
              }%
              \wd1=\@tempdima
           \fi
         \fi
         \@tempdimb=\ht1
         \advance\@tempdimb by -#2
         \advance\@tempdimb by #3
         \leavevmode
         \raise -\@tempdimb \hbox{\box1}%
      \fi
      \egroup%
}%
\def\DFRAME#1#2#3#4#5{%
  \vspace\topsep
  \hfil\break
  \bgroup
     \leftskip\@flushglue
	 \rightskip\@flushglue
	 \parindent\z@
	 \parfillskip\z@skip
     \let\QCTOptA\empty
     \let\QCTOptB\empty
     \let\QCBOptA\empty
     \let\QCBOptB\empty
	 \vbox\bgroup
        \ifOverFrame 
           #5\QCTOptA\par
        \fi
        \GRAPHIC{#4}{#3}{#1}{#2}{\z@}%
        \ifUnderFrame 
           \break#5\QCBOptA
        \fi
	 \egroup
  \egroup
  \vspace\topsep
  \break
}%
\def\FFRAME#1#2#3#4#5#6#7{%
  \@ifundefined{floatstyle}
    {
     \begin{figure}[#1]%
    }
    {
	 \ifx#1h
      \begin{figure}[H]%
	 \else
      \begin{figure}[#1]%
	 \fi
	}
  \let\QCTOptA\empty
  \let\QCTOptB\empty
  \let\QCBOptA\empty
  \let\QCBOptB\empty
  \ifOverFrame
    #4
    \ifx\QCTOptA\empty
    \else
      \ifx\QCTOptB\empty
        \caption{\QCTOptA}%
      \else
        \caption[\QCTOptB]{\QCTOptA}%
      \fi
    \fi
    \ifUnderFrame\else
      \label{#5}%
    \fi
  \else
    \UnderFrametrue%
  \fi
  \begin{center}\GRAPHIC{#7}{#6}{#2}{#3}{\z@}\end{center}%
  \ifUnderFrame
    #4
    \ifx\QCBOptA\empty
      \caption{}%
    \else
      \ifx\QCBOptB\empty
        \caption{\QCBOptA}%
      \else
        \caption[\QCBOptB]{\QCBOptA}%
      \fi
    \fi
    \label{#5}%
  \fi
  \end{figure}%
 }%
\def\makeactives{
  \catcode`\"=\active
  \catcode`\;=\active
  \catcode`\:=\active
  \catcode`\'=\active
  \catcode`\~=\active
}
   \gdef\activesoff{%
      \def"{\string"}%
      \def;{\string;}%
      \def:{\string:}%
      \def'{\string'}%
      \def~{\string~}%
    }
\def\FRAME#1#2#3#4#5#6#7#8{%
 \bgroup
 \ifnum\@msidraft=\@ne
   \wasdrafttrue
 \else
   \wasdraftfalse%
 \fi
 \def\LaTeXparams{}%
 \dispkind=\z@
 \def\LaTeXparams{}%
 \doFRAMEparams{#1}%
 \ifnum\dispkind=\z@\IFRAME{#2}{#3}{#4}{#7}{#8}{#5}\else
  \ifnum\dispkind=\@ne\DFRAME{#2}{#3}{#7}{#8}{#5}\else
   \ifnum\dispkind=\tw@
    \edef\@tempa{\noexpand\FFRAME{\LaTeXparams}}%
    \@tempa{#2}{#3}{#5}{#6}{#7}{#8}%
    \fi
   \fi
  \fi
  \ifwasdraft\@msidraft=1\else\@msidraft=0\fi{}%
  \egroup
 }%
\def\TEXUX#1{"texux"}
\def\func#1{\mathop{\rm #1}\nolimits}%
\long\def\QQQ#1#2{%
     \long\expandafter\def\csname#1\endcsname{#2}}%
\long\def\QQA#1#2{}%
\def\QTR#1#2{{\csname#1\endcsname {#2}}}%
\def\EXPAND#1[#2]#3{}%
\def\NOEXPAND#1[#2]#3{}%
\def\LaTeXparent#1{}%
\def\ChildStyles#1{}%
\def\ChildDefaults#1{}%
\def\QTagDef#1#2#3{}%
  \providecommand{\UNICODE}[2][]{\protect\rule{.1in}{.1in}}
  \providecommand{\U}[1]{\protect\rule{.1in}{.1in}}
\def\QQfnmark#1{\footnotemark}
 \def\abstract{%
  \if@twocolumn
   \section*{Abstract (Not appropriate in this style!)}%
   \else \small 
   \begin{center}{\bf Abstract\vspace{-.5em}\vspace{\z@}}\end{center}%
   \quotation 
   \fi
  }%
   \def\registered{\relax\ifmmode{}\r@gistered
                    \else$\m@th\r@gistered$\fi}%
 \def\r@gistered{^{\ooalign
  {\hfil\raise.07ex\hbox{$\scriptstyle\rm\text{R}$}\hfil\crcr
  \mathhexbox20D}}}}{}%
\newdimen\theight
\def\newfmtname{LaTeX2e}
  \DeclareOldFontCommand{\rm}{\normalfont\rmfamily}{\mathrm}
  \DeclareOldFontCommand{\sf}{\normalfont\sffamily}{\mathsf}
  \DeclareOldFontCommand{\tt}{\normalfont\ttfamily}{\mathtt}
  \DeclareOldFontCommand{\bf}{\normalfont\bfseries}{\mathbf}
  \DeclareOldFontCommand{\it}{\normalfont\itshape}{\mathit}
  \DeclareOldFontCommand{\sl}{\normalfont\slshape}{\@nomath\sl}
  \DeclareOldFontCommand{\sc}{\normalfont\scshape}{\@nomath\sc}
\def\alpha{{\Greekmath 010B}}%
\def\beta{{\Greekmath 010C}}%
\def\gamma{{\Greekmath 010D}}%
\def\delta{{\Greekmath 010E}}%
\def\epsilon{{\Greekmath 010F}}%
\def\zeta{{\Greekmath 0110}}%
\def\eta{{\Greekmath 0111}}%
\def\theta{{\Greekmath 0112}}%
\def\iota{{\Greekmath 0113}}%
\def\kappa{{\Greekmath 0114}}%
\def\lambda{{\Greekmath 0115}}%
\def\mu{{\Greekmath 0116}}%
\def\nu{{\Greekmath 0117}}%
\def\xi{{\Greekmath 0118}}%
\def\pi{{\Greekmath 0119}}%
\def\rho{{\Greekmath 011A}}%
\def\sigma{{\Greekmath 011B}}%
\def\tau{{\Greekmath 011C}}%
\def\upsilon{{\Greekmath 011D}}%
\def\phi{{\Greekmath 011E}}%
\def\chi{{\Greekmath 011F}}%
\def\psi{{\Greekmath 0120}}%
\def\omega{{\Greekmath 0121}}%
\def\varepsilon{{\Greekmath 0122}}%
\def\vartheta{{\Greekmath 0123}}%
\def\varpi{{\Greekmath 0124}}%
\def\varrho{{\Greekmath 0125}}%
\def\varsigma{{\Greekmath 0126}}%
\def\varphi{{\Greekmath 0127}}%
\def\nabla{{\Greekmath 0272}}
\def\FindBoldGroup{%
   {\setbox0=\hbox{$\mathbf{x\global\edef\theboldgroup{\the\mathgroup}}$}}%
}
\def\Greekmath#1#2#3#4{%
    \if@compatibility
        \ifnum\mathgroup=\symbold
           \mathchoice{\mbox{\boldmath$\displaystyle\mathchar"#1#2#3#4$}}%
                      {\mbox{\boldmath$\textstyle\mathchar"#1#2#3#4$}}%
                      {\mbox{\boldmath$\scriptstyle\mathchar"#1#2#3#4$}}%
                      {\mbox{\boldmath$\scriptscriptstyle\mathchar"#1#2#3#4$}}%
        \else
           \mathchar"#1#2#3#4%
        \fi 
    \else 
        \FindBoldGroup
        \ifnum\mathgroup=\theboldgroup 
           \mathchoice{\mbox{\boldmath$\displaystyle\mathchar"#1#2#3#4$}}%
                      {\mbox{\boldmath$\textstyle\mathchar"#1#2#3#4$}}%
                      {\mbox{\boldmath$\scriptstyle\mathchar"#1#2#3#4$}}%
                      {\mbox{\boldmath$\scriptscriptstyle\mathchar"#1#2#3#4$}}%
        \else
           \mathchar"#1#2#3#4%
        \fi     	    
	  \fi}
\newif\ifGreekBold  \GreekBoldfalse
\let\SAVEPBF=\pbf
\def\pbf{\GreekBoldtrue\SAVEPBF}%
  \newcounter{equationnumber}  
  \def\mathletters{%
     \addtocounter{equation}{1}
     \edef\@currentlabel{\theequation}%
     \setcounter{equationnumber}{\c@equation}
     \setcounter{equation}{0}%
     \edef\theequation{\@currentlabel\noexpand\alph{equation}}%
  }
    \def\BibTeX{{\rm B\kern-.05em{\sc i\kern-.025em b}\kern-.08em
                 T\kern-.1667em\lower.7ex\hbox{E}\kern-.125emX}}}{}%
\def\AmS{{\protect\usefont{OMS}{cmsy}{m}{n}%
                A\kern-.1667em\lower.5ex\hbox{M}\kern-.125emS}}}{}%
\def\@@eqncr{\let\@tempa\relax
    \ifcase\@eqcnt \def\@tempa{& & &}\or \def\@tempa{& &}%
      \else \def\@tempa{&}\fi
     \@tempa
     \if@eqnsw
        \iftag@
           \@taggnum
        \else
           \@eqnnum\stepcounter{equation}%
        \fi
     \fi
     \global\tag@false
     \global\@eqnswtrue
     \global\@eqcnt\z@\cr}
\def\TCItag{\@ifnextchar*{\@TCItagstar}{\@TCItag}}
\def\@TCItag#1{%
    \global\tag@true
    \global\def\@taggnum{(#1)}%
    \global\def\@currentlabel{#1}}
\def\@TCItagstar*#1{%
    \global\tag@true
    \global\def\@taggnum{#1}%
    \global\def\@currentlabel{#1}}
\def\tint{\msi@int\textstyle\int}%
\def\tiint{\msi@int\textstyle\iint}%
\def\tiiint{\msi@int\textstyle\iiint}%
\def\tiiiint{\msi@int\textstyle\iiiint}%
\def\tidotsint{\msi@int\textstyle\idotsint}%
\def\toint{\msi@int\textstyle\oint}%
\newtoks\temptoksa
\newtoks\temptoksb
\newtoks\temptoksc
\def\msi@int#1#2{%
 \def\@temp{{#1#2\the\temptoksc_{\the\temptoksa}^{\the\temptoksb}}}%
 \futurelet\@nextcs
 \@int
}
\def\@int{%
   \ifx\@nextcs\limits
      \typeout{Found limits}%
      \temptoksc={\limits}%
	  \let\@next\@intgobble%
   \else\ifx\@nextcs\nolimits
      \typeout{Found nolimits}%
      \temptoksc={\nolimits}%
	  \let\@next\@intgobble%
   \else
      \typeout{Did not find limits or no limits}%
      \temptoksc={}%
      \let\@next\msi@limits%
   \fi\fi
   \@next   
}%
\def\@intgobble#1{%
   \typeout{arg is #1}%
   \msi@limits
}
\def\msi@limits{%
   \temptoksa={}%
   \temptoksb={}%
   \@ifnextchar_{\@limitsa}{\@limitsb}%
}
\def\@limitsa_#1{%
   \temptoksa={#1}%
   \@ifnextchar^{\@limitsc}{\@temp}%
}
\def\@limitsb{%
   \@ifnextchar^{\@limitsc}{\@temp}%
}
\def\@limitsc^#1{%
   \temptoksb={#1}%
   \@ifnextchar_{\@limitsd}{\@temp}%
}
\def\@limitsd_#1{%
   \temptoksa={#1}%
   \@temp
}
\def\dint{\msi@int\displaystyle\int}%
\def\diint{\msi@int\displaystyle\iint}%
\def\diiint{\msi@int\displaystyle\iiint}%
\def\diiiint{\msi@int\displaystyle\iiiint}%
\def\didotsint{\msi@int\displaystyle\idotsint}%
\def\doint{\msi@int\displaystyle\oint}%
\def\GRAPHIC#1#2#3#4#5{%
   \ifnum\@msidraft=\@ne\draftbox{#2}{#3}{#4}{#5}%
   \else\graffile{#2}{#3}{#4}{#5}
   \fi
}
\def\graffile#1#2#3#4{\includegraphics[width=#2,height=#3]{#1}}
\def\ExitTCILatex{\makeatother }
\if@compatibility\message{amsmath already loaded}\fi\aftergroup\ExitTCILatex}
\if@compatibility\message{amstex already loaded}\fi\aftergroup\ExitTCILatex}
\if@compatibility\message{amsgen already loaded}\fi\aftergroup\ExitTCILatex}
\let\DOTSI\relax
\def\RIfM@{\relax\ifmmode}%
\def\FN@{\futurelet\next}%
\def\iint{\DOTSI\intno@\tw@\FN@\ints@}%
\def\iiint{\DOTSI\intno@\thr@@\FN@\ints@}%
\def\iiiint{\DOTSI\intno@4 \FN@\ints@}%
\def\idotsint{\DOTSI\intno@\z@\FN@\ints@}%
\def\ints@{\findlimits@\ints@@}%
\newif\iflimtoken@
\newif\iflimits@
\def\findlimits@{\limtoken@true\ifx\next\limits\limits@true
 \else\ifx\next\nolimits\limits@false\else
 \limtoken@false\ifx\ilimits@\nolimits\limits@false\else
 \ifinner\limits@false\else\limits@true\fi\fi\fi\fi}%
\def\multint@{\int\ifnum\intno@=\z@\intdots@                          
 \else\intkern@\fi                                                    
 \ifnum\intno@>\tw@\int\intkern@\fi                                   
 \ifnum\intno@>\thr@@\int\intkern@\fi                                 
 \int}
\def\multintlimits@{\intop\ifnum\intno@=\z@\intdots@\else\intkern@\fi
 \ifnum\intno@>\tw@\intop\intkern@\fi
 \ifnum\intno@>\thr@@\intop\intkern@\fi\intop}%
\def\intic@{%
    \mathchoice{\hskip.5em}{\hskip.4em}{\hskip.4em}{\hskip.4em}}%
\def\negintic@{\mathchoice
 {\hskip-.5em}{\hskip-.4em}{\hskip-.4em}{\hskip-.4em}}%
\def\ints@@{\iflimtoken@                                              
 \def\ints@@@{\iflimits@\negintic@
   \mathop{\intic@\multintlimits@}\limits                             
  \else\multint@\nolimits\fi                                          
  \eat@}
 \else                                                                
 \def\ints@@@{\iflimits@\negintic@
  \mathop{\intic@\multintlimits@}\limits\else
  \multint@\nolimits\fi}\fi\ints@@@}%
\def\intkern@{\mathchoice{\!\!\!}{\!\!}{\!\!}{\!\!}}%
\def\plaincdots@{\mathinner{\cdotp\cdotp\cdotp}}%
\def\intdots@{\mathchoice{\plaincdots@}%
 {{\cdotp}\mkern1.5mu{\cdotp}\mkern1.5mu{\cdotp}}%
 {{\cdotp}\mkern1mu{\cdotp}\mkern1mu{\cdotp}}%
 {{\cdotp}\mkern1mu{\cdotp}\mkern1mu{\cdotp}}}%
\def\RIfM@{\relax\protect\ifmmode}
\def\text{\RIfM@\expandafter\text@\else\expandafter\mbox\fi}
\let\nfss@text\text
\def\text@#1{\mathchoice
   {\textdef@\displaystyle\f@size{#1}}%
   {\textdef@\textstyle\tf@size{\firstchoice@false #1}}%
   {\textdef@\textstyle\sf@size{\firstchoice@false #1}}%
   {\textdef@\textstyle \ssf@size{\firstchoice@false #1}}%
   \glb@settings}
\def\textdef@#1#2#3{\hbox{{%
                    \everymath{#1}%
                    \let\f@size#2\selectfont
                    #3}}}
\newif\iffirstchoice@
\def\Let@{\relax\iffalse{\fi\let\\=\cr\iffalse}\fi}%
\def\vspace@{\def\vspace##1{\crcr\noalign{\vskip##1\relax}}}%
\def\multilimits@{\bgroup\vspace@\Let@
 \baselineskip\fontdimen10 \scriptfont\tw@
 \advance\baselineskip\fontdimen12 \scriptfont\tw@
 \lineskip\thr@@\fontdimen8 \scriptfont\thr@@
 \lineskiplimit\lineskip
 \vbox\bgroup\ialign\bgroup\hfil$\m@th\scriptstyle{##}$\hfil\crcr}%
\def\Sb{_\multilimits@}%
\def\endSb{\crcr\egroup\egroup\egroup}%
\def\Sp{^\multilimits@}%
\newdimen\ex@
\def\rightarrowfill@#1{$#1\m@th\mathord-\mkern-6mu\cleaders
 \hbox{$#1\mkern-2mu\mathord-\mkern-2mu$}\hfill
 \mkern-6mu\mathord\rightarrow$}%
\def\leftarrowfill@#1{$#1\m@th\mathord\leftarrow\mkern-6mu\cleaders
 \hbox{$#1\mkern-2mu\mathord-\mkern-2mu$}\hfill\mkern-6mu\mathord-$}%
\def\leftrightarrowfill@#1{$#1\m@th\mathord\leftarrow
\mkern-6mu\cleaders
 \hbox{$#1\mkern-2mu\mathord-\mkern-2mu$}\hfill
 \mkern-6mu\mathord\rightarrow$}%
\def\overrightarrow{\mathpalette\overrightarrow@}%
\def\overrightarrow@#1#2{\vbox{\ialign{##\crcr\rightarrowfill@#1\crcr
 \noalign{\kern-\ex@\nointerlineskip}$\m@th\hfil#1#2\hfil$\crcr}}}%
\def\overleftarrow{\mathpalette\overleftarrow@}%
\def\overleftarrow@#1#2{\vbox{\ialign{##\crcr\leftarrowfill@#1\crcr
 \noalign{\kern-\ex@\nointerlineskip}$\m@th\hfil#1#2\hfil$\crcr}}}%
\def\overleftrightarrow{\mathpalette\overleftrightarrow@}%
\def\overleftrightarrow@#1#2{\vbox{\ialign{##\crcr
   \leftrightarrowfill@#1\crcr
 \noalign{\kern-\ex@\nointerlineskip}$\m@th\hfil#1#2\hfil$\crcr}}}%
\def\underrightarrow{\mathpalette\underrightarrow@}%
\def\underrightarrow@#1#2{\vtop{\ialign{##\crcr$\m@th\hfil#1#2\hfil
  $\crcr\noalign{\nointerlineskip}\rightarrowfill@#1\crcr}}}%
\def\underleftarrow{\mathpalette\underleftarrow@}%
\def\underleftarrow@#1#2{\vtop{\ialign{##\crcr$\m@th\hfil#1#2\hfil
  $\crcr\noalign{\nointerlineskip}\leftarrowfill@#1\crcr}}}%
\def\underleftrightarrow{\mathpalette\underleftrightarrow@}%
\def\underleftrightarrow@#1#2{\vtop{\ialign{##\crcr$\m@th
  \hfil#1#2\hfil$\crcr
 \noalign{\nointerlineskip}\leftrightarrowfill@#1\crcr}}}%
\def\qopnamewl@#1{\mathop{\operator@font#1}\nlimits@}
\let\nlimits@\displaylimits
\def\setboxz@h{\setbox\z@\hbox}
\def\varlim@#1#2{\mathop{\vtop{\ialign{##\crcr
 \hfil$#1\m@th\operator@font lim$\hfil\crcr
 \noalign{\nointerlineskip}#2#1\crcr
 \noalign{\nointerlineskip\kern-\ex@}\crcr}}}}
 \def\rightarrowfill@#1{\m@th\setboxz@h{$#1-$}\ht\z@\z@
  $#1\copy\z@\mkern-6mu\cleaders
  \hbox{$#1\mkern-2mu\box\z@\mkern-2mu$}\hfill
  \mkern-6mu\mathord\rightarrow$}
\def\leftarrowfill@#1{\m@th\setboxz@h{$#1-$}\ht\z@\z@
  $#1\mathord\leftarrow\mkern-6mu\cleaders
  \hbox{$#1\mkern-2mu\copy\z@\mkern-2mu$}\hfill
  \mkern-6mu\box\z@$}
\def\projlim{\qopnamewl@{proj\,lim}}
\def\injlim{\qopnamewl@{inj\,lim}}
\def\varinjlim{\mathpalette\varlim@\rightarrowfill@}
\def\varprojlim{\mathpalette\varlim@\leftarrowfill@}
\def\varliminf{\mathpalette\varliminf@{}}
\def\varliminf@#1{\mathop{\underline{\vrule\@depth.2\ex@\@width\z@
   \hbox{$#1\m@th\operator@font lim$}}}}
\def\varlimsup{\mathpalette\varlimsup@{}}
\def\varlimsup@#1{\mathop{\overline
  {\hbox{$#1\m@th\operator@font lim$}}}}
\def\align{\@verbatim \frenchspacing\@vobeyspaces \@alignverbatim
You are using the "align" environment in a style in which it is not defined.}
\let\csname endalign*\endcsname =\endtrivlist
\def\alignat{\@verbatim \frenchspacing\@vobeyspaces \@alignatverbatim
You are using the "alignat" environment in a style in which it is not defined.}
\let\csname endalignat*\endcsname =\endtrivlist
\def\xalignat{\@verbatim \frenchspacing\@vobeyspaces \@xalignatverbatim
You are using the "xalignat" environment in a style in which it is not defined.}
\let\csname endxalignat*\endcsname =\endtrivlist
\def\gather{\@verbatim \frenchspacing\@vobeyspaces \@gatherverbatim
You are using the "gather" environment in a style in which it is not defined.}
\let\csname endgather*\endcsname =\endtrivlist
\def\multiline{\@verbatim \frenchspacing\@vobeyspaces \@multilineverbatim
You are using the "multiline" environment in a style in which it is not defined.}
\let\csname endmultiline*\endcsname =\endtrivlist
\def\arrax{\@verbatim \frenchspacing\@vobeyspaces \@arraxverbatim
You are using a type of "array" construct that is only allowed in AmS-LaTeX.}
\def\tabulax{\@verbatim \frenchspacing\@vobeyspaces \@tabulaxverbatim
You are using a type of "tabular" construct that is only allowed in AmS-LaTeX.}
\let\csname endarrax*\endcsname =\endtrivlist
\let\csname endtabulax*\endcsname =\endtrivlist
 \def\endequation{%
     \ifmmode\ifinner 
      \iftag@
        \addtocounter{equation}{-1} 
        $\hfil
           \displaywidth\linewidth\@taggnum\egroup \endtrivlist
        \global\tag@false
        \global\@ignoretrue   
      \else
        $\hfil
           \displaywidth\linewidth\@eqnnum\egroup \endtrivlist
        \global\tag@false
        \global\@ignoretrue 
      \fi
     \else   
      \iftag@
        \addtocounter{equation}{-1} 
        \eqno \hbox{\@taggnum}
        \global\tag@false%
        $$\global\@ignoretrue
      \else
        \eqno \hbox{\@eqnnum}
        $$\global\@ignoretrue
      \fi
     \fi\fi
 } 
 \newif\iftag@ \tag@false
 \def\TCItag{\@ifnextchar*{\@TCItagstar}{\@TCItag}}
 \def\@TCItag#1{%
     \global\tag@true
     \global\def\@taggnum{(#1)}%
     \global\def\@currentlabel{#1}}
 \def\@TCItagstar*#1{%
     \global\tag@true
     \global\def\@taggnum{#1}%
     \global\def\@currentlabel{#1}}
     \def\tag{\@ifnextchar*{\@tagstar}{\@tag}}
     \def\@tag#1{%
         \global\tag@true
         \global\def\@taggnum{(#1)}}
     \def\@tagstar*#1{%
         \global\tag@true
         \global\def\@taggnum{#1}}
\def\dfrac#1#2{{\displaystyle {#1 \over #2}}}%
\def\qed{\hfill$\square$\par}
\def\func#1{\mathop{\mathrm{#1}}\nolimits}
\def\diint{\mathop{\int\int}}
\def\dint{\displaystyle\int}
\def\Xint#1{\mathchoice
{\XXint\displaystyle\textstyle{#1}}%
{\XXint\textstyle\scriptstyle{#1}}%
{\XXint\scriptstyle\scriptscriptstyle{#1}}%
{\XXint\scriptscriptstyle\scriptscriptstyle{#1}}%
\!\int}
\def\XXint#1#2#3{{\setbox0=\hbox{$#1{#2#3}{\int}$ }
\vcenter{\hbox{$#2#3$ }}\kern-.6\wd0}}
\def\oint{\Xint-}
\def\toint{\Xint-}
\def\enddoc{

\begin{document}
\title{Sharp sub-Gaussian upper bounds for subsolutions of Trudinger's equation on Riemannian manifolds}
\author{Philipp S\"urig}
\date{February 2024}
\maketitle

\begin{abstract}
We consider on Riemannian manifolds the nonlinear evolution equation
\begin{equation*}
\partial _{t}u=\Delta _{p}(u^{1/(p-1)}),
\end{equation*}%
where $p>1$. This equation is also known as a doubly non-linear parabolic equation or Trudinger's equation. We prove that weak subsolutions of this equation have a sub-Gaussian upper bound and prove that this upper bound is sharp for a specific class of manifolds including $\mathbb{R}^{n}$.
\end{abstract}

\tableofcontents

\let\thefootnote\relax\footnotetext{\textit{2020 Mathematics Subject	Classification.} 35K55, 58J35, 35B05. \newline	\textit{Key words and phrases.} Trudinger equation, doubly nonlinear parabolic equation, Riemannian manifold, sub-Gaussian estimate. \newline
Funded by the Deutsche Forschungsgemeinschaft (DFG, German Research	Foundation) - Project-ID 317210226 - SFB 1283.}

\section{Introduction}	

We are concerned here with solutions of the non-linear evolution equation 
\begin{equation}
\partial _{t}u=\Delta _{p}u^{q},  \label{evoeq}
\end{equation}
where $p>1$, $q>0$, $u=u(x,t)$ is an unknown non-negative function and $%
\Delta _{p}$ is the $p$-Laplacian 
\begin{equation*}
\Delta _{p}v=\func{div}\left( |\nabla v|^{p-2}\nabla v\right) .
\end{equation*}
The equation (\ref{evoeq}) is frequently referred to as a \emph{doubly non-linear parabolic equation}.

G. I. Barenblatt constructed in \cite{barenblatt1952self} spherically symmetric self-similar solutions of (\ref{evoeq}) in $\mathbb{R}^{n}$, that are nowadays called \textit{Barenblatt solutions}. In the case \begin{equation}\label{qp-q}q(p-1)=1\end{equation}
the Barenblatt solution is given by 
\begin{equation}\label{barenborder}
u(x,t)=\frac{1}{t^{n/p}}\exp \left( -\zeta \left( \frac{|x|}{t^{1/p}}\right)
^{\frac{p}{p-1}}\right) ,
\end{equation}%
where $\zeta =(p-1)^{2}p^{-\frac{p}{p-1}}$.
In this case the equation (\ref{evoeq}) becomes as follows \begin{equation}\label{evoeq1}\partial _{t}u=\Delta _{p}\left(u^{\frac{1}{p-1}}\right),\end{equation} and is referred to as \textit{Trudinger's equation} (\cite{trudinger1968pointwise}). For $p=2$ we get $q=1$ so that equation (\ref{evoeq1}) becomes the classical \textit{heat equation} $\partial _{t}u=\Delta u$, and the function (\ref{barenborder}) becomes the classical \textit{heat kernel} or \textit{Gauss-Weierstrass} function. 

The Trudinger equation (\ref{evoeq1}) in $\mathbb{R}^{n}$, was investigated in \cite{del2004nonlinear, grillo2006super, ishige1996existence}, where existence and uniqueness results and decay properties were proved. 

In the present paper we are interested in estimates for solutions of
the Trudinger equation (\ref{evoeq1}) on geodesically complete Riemannian manifolds. We understand solutions in a certain weak sense (see Subsection \ref{secweakdef} for the definition). For the heat equation on Riemannian manifolds, that is, when $p=2$, we refer to \cite{Coulhon1997, grigor1994heat, Grigoryan2012,  grigor2009heat, grigor2016surgery, grigor2022volume} for related results.

In the case $q(p-1)>1$, solutions of (\ref{evoeq}) on Riemannian manifolds have the property of \textit{finite propagation speed} which was investigated in \cite{andreucci2021asymptotic, bonforte2005asymptotics,  de2022wasserstein,  dekkers2005finite, grigor2023finite, Grigor’yan2024, grillo2016smoothing, vazquez2015fundamental}. Solutions of (\ref{evoeq}) in the case $q(p-1)<1$ on Riemannian manifolds were, for example, considered in \cite{andreucci2021extinction, bonforte2008fast}. 

Let $M$ be a geodesically complete Riemannian manifold. Denote by $\mu$ the \textit{Riemannian measure} on $M$, by $d$ the \textit{geodesic distance} and by $B(x, r)$ the \textit{geodesic ball} of radius $r$ centered at $x$.

One of the first main results of the present paper (cf. \textbf{Theorem \ref{corcurv}}) is as follows.
\begin{theorem}\label{mainthmint}
Let $M$ satisfy a \textit{relative Faber-Krahn inequality} (see Subsection \ref{secMoser}). Let $u$ be a bounded non-negative solution to (\ref{evoeq1}) in $M\times [0, \infty)$ with an initial function $u_{0}=u(\cdot ,0$) and set $A=\textnormal{supp}~u_{0}$. Then, for all $x\in M$ and all $t>0$, \begin{equation}\label{mainre}||u(\cdot,t)||_{L^{\infty}\left(B(x,\frac{1}{2}t^{1/p})\right)}\leq\frac{C}{\mu(B(x, t^{1/p}))}\exp\left(-c\left(\frac{d(x, A)}{t^{1/p}}\right)^{\frac{p}{p-1}}\right),\end{equation} where $C,c$ are positive constants.
\end{theorem}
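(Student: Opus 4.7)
The plan is to adapt the Grigor'yan--Saloff-Coste strategy for sub-Gaussian heat-kernel bounds to Trudinger's nonlinear equation, via a weighted Moser iteration scheme. The two main ingredients are (i) a family of weighted parabolic energy (Caccioppoli) inequalities for subsolutions of (\ref{evoeq1}), and (ii) the Sobolev-type embedding on balls that follows from the relative Faber--Krahn hypothesis (presumably established in an earlier section of the paper). The weight is defined, for a free parameter $\lambda>0$, by
\[
\xi(y,s) := \lambda\, d(y,A) - c_{p}\lambda^{p}\,s,
\]
so that $|\nabla\xi|\leq\lambda$ a.e.\ and $\partial_{s}\xi=-c_{p}\lambda^{p}$; the constant $c_{p}$ will be chosen large.

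The first step is to derive the weighted energy inequality. Setting $v=u^{1/(p-1)}$ and testing the PDE against $u^{q-1}\varphi^{p}e^{\xi}$ (where $\varphi$ is a spatial cutoff and initially $q=2$), integrating by parts, and applying Young's inequality with conjugate exponents $p$ and $p/(p-1)$ to the mixed terms
\[
\int u^{q-1}e^{\xi}\varphi^{p}\,\nabla\xi\cdot|\nabla v|^{p-2}\nabla v\,d\mu \quad\text{and}\quad \int u^{q-1}e^{\xi}\varphi^{p-1}\nabla\varphi\cdot|\nabla v|^{p-2}\nabla v\,d\mu
\]
produces, after absorbing the cross terms into the nonlinear dissipation $(q-1)(p-1)\int v^{(q-1)(p-1)-1}|\nabla v|^{p}\varphi^{p}e^{\xi}d\mu$, a local inequality
\[
\sup_{s\in[t_{1},t_{2}]}\!\int u^{q}e^{\xi}\varphi^{p}d\mu + \int_{t_{1}}^{t_{2}}\!\!\!\int|\nabla(v^{\gamma}\varphi e^{\xi/p})|^{p}d\mu\,ds \leq C_{p}\!\int_{t_{1}}^{t_{2}}\!\!\!\int u^{q}e^{\xi}\bigl(|\nabla\varphi|^{p}+\varphi^{p}\lambda^{p}\bigr)d\mu\,ds,
\]
provided $c_{p}$ is large enough that the contribution of $\partial_{s}\xi$ dominates the residual $|\nabla\xi|^{p}$ terms (this is the Hamilton--Jacobi-type condition $\partial_{s}\xi+C|\nabla\xi|^{p}\leq 0$ and is the mechanism producing the correct exponent $p/(p-1)$).

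Feeding this energy inequality into the Sobolev embedding on balls coming from relative Faber--Krahn and running the standard Moser iteration on shrinking cylinders of radius $\sim t^{1/p}$ promotes the $L^{q}$-norm to $L^{\infty}$, so that after summing the geometric series of iteration constants one obtains
\[
\sup_{B(x,t^{1/p}/2)}\bigl(u(\cdot,t)\,e^{\xi(\cdot,t)}\bigr)\leq \frac{C}{\mu(B(x,t^{1/p}))}\int_{M} u(\cdot,t/2)\,e^{\xi(\cdot,t/2)}\,d\mu.
\]
A global form of the weighted energy estimate shows the right-hand integral is non-increasing in $s$, so it is controlled by $\int_{A}u_{0}\,d\mu\leq\|u_{0}\|_{\infty}\mu(A)$. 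On $B(x,t^{1/p}/2)$ one has $d(y,A)\geq d(x,A)-\tfrac{1}{2}t^{1/p}$; pulling out $e^{-\xi(\cdot,t)}$ and then optimizing over $\lambda>0$ (the minimum is attained at $\lambda\sim(d(x,A)/t)^{1/(p-1)}$) replaces $-\lambda d(x,A)+c_{p}\lambda^{p}t$ by $-c(d(x,A)/t^{1/p})^{p/(p-1)}$, yielding (\ref{mainre}).

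\emph{Main obstacle.} The heart of the argument is the weighted energy inequality and the subsequent iteration. Three points are delicate: justifying the integration by parts rigorously on a non-compact manifold with the unbounded weight $e^{\xi}$ (requiring a cutoff-and-limit procedure together with the growth information on $u$); arranging the Young-inequality bookkeeping so that the nonlinear dissipation absorbs \emph{both} mixed terms while still leaving the sharp exponent $p/(p-1)$ in the Hamilton--Jacobi condition on $\xi$; and propagating the weight through the Moser iteration so that the final prefactor is $\mu(B(x,t^{1/p}))^{-1}$ rather than the weaker $\mu(B(x,t^{1/p}))^{-1/q}$ coming from any single application of the mean value inequality.
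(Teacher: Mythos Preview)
Your strategy is a recognizable Davies-type route and differs from what the paper does. The paper \emph{decouples} the two mechanisms: it first proves an integral maximum principle with a \emph{non-positive} weight of the form
\[
\xi(y,\tau)=-\zeta\Bigl(\tfrac{d(y,A_{r}^{c})}{(s-\tau)^{1/p}}\Bigr)^{\frac{p}{p-1}},
\]
i.e.\ distance to the \emph{complement} of a thickening of $A$, so that $e^{\xi}\leq 1$ everywhere and all weighted integrals are automatically finite. Iterating this estimate over a telescoping family of radii and times (in the spirit of Grigor'yan~1997) yields an $L^{\lambda}$ Davies--Gaffney bound. Separately, an \emph{unweighted} Moser iteration produces an $L^{\lambda}\to L^{\infty}$ mean-value inequality on parabolic cylinders. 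The two are then combined, and the doubling property (a consequence of relative Faber--Krahn) absorbs the polynomial-in-$\rho/t^{1/p}$ corrections that arise when one bounds $\int_{A_{\rho}}u^{\lambda}$ in terms of $\mu(B(x_{0},t^{1/p}))$.

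Your proposal has a genuine gap precisely at the point you flag: the weight $e^{\xi}=e^{\lambda d(\cdot,A)-c_{p}\lambda^{p}s}$ is unbounded on $M$, and the only a~priori information is that $u$ is bounded with $u_{0}\in L^{1}$. That is not enough to make $\int_{M}u^{q}e^{\xi}$ finite for $t>0$, so the global monotonicity statement you rely on is not justified. In the linear heat-equation setting one can repair this by semigroup domination or by working with $w=ue^{\phi}$, which again solves a linear parabolic equation; for Trudinger's equation there is no such substitution, and a bare cutoff-and-limit argument fails because the boundary term $\int u^{q}e^{\xi}|\nabla\varphi_{R}|^{p}$ need not vanish as $R\to\infty$. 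A truncated weight $\xi_{N}=\min(\lambda d(\cdot,A),N)-c_{p}\lambda^{p}s$ could in principle rescue the monotonicity, but you would then have to track how the truncation interacts with the iteration and with the final optimisation in $\lambda$.

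There is a second, related difficulty you underestimate: propagating the weight through the nonlinear Moser iteration. Each step of the iteration raises the power of $u$, and in your weighted Caccioppoli the term $|\nabla(v^{\gamma}\varphi e^{\xi/p})|^{p}$ carries $e^{\xi}$, while the Sobolev step produces $e^{\kappa\xi}$; the exponents on the weight drift and do not simply telescope as they do when $w=ue^{\phi}$ satisfies a linear equation. The paper avoids this entirely by keeping the Moser iteration unweighted. If you want to push your route through, you will need either a truncation scheme for $\xi$ together with a careful accounting of the weight exponent at each level, or to switch to the paper's non-positive weight based on $d(\cdot,A_{r}^{c})$, which makes both problems disappear.
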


For example, the relative Faber-Krahn inequality is satisfied if $M$ has non-negative Ricci curvature (see \cite{Buser, grigor, Saloff}).

Comparing the estimate from Theorem \ref{mainthmint} with the Barenblatt solution (\ref{barenborder}), we see that the estimate  (\ref{mainre}) is sharp in $\mathbb{R}^{n}$. Moreover, we not only get a sharp upper bound in $\mathbb{R}^{n}$, but also in some class of spherically symmetric manifolds (model manifolds) satisfying the relative Faber-Krahn inequality (cf. Proposition \ref{supersolmodel}).

Recall that in the linear case $p=2$ the classical Li-Yau estimate (\cite{Li1986OnTP}) says the following: if $M$ has non-negative Ricci curvature, the heat kernel $p_{t}(x, y)$ (that is the fundamental solution of the heat equation)
satisfies for all $x, y\in M$ and all $t>0$, \begin{equation}\label{liyau}p_{t}(x, y)\asymp \frac{C}{\mu(B(x, \sqrt{t}))}\exp\left(-c\frac{d^{2}(x,y)}{t}\right),\end{equation} where "$\asymp$" means that an upper and lower bound hold, where the corresponding constants might differ. If $M$ satisfies a relative Faber-Krahn inequality then the upper bound in (\ref{liyau}) was proved in \cite{grigor1994heat}.
Clearly the upper bound in (\ref{liyau}) matches (\ref{mainre}) with $p=2$.

The second main result of the present paper is the following (cf. \textbf{Theorem \ref{upperCH}}).

\begin{theorem}\label{upperCHint}
Let $M$ satisfy a \textit{uniform Sobolev inequality} (see Subsection \ref{secMoser}) and $n$ be the dimension of $M$. Let $u$ be a bounded non-negative solution to (\ref{evoeq1}) in $M\times [0, \infty)$ with an initial function $u_{0}=u(\cdot ,0$) and set $A=\textnormal{supp}~u_{0}$. Then, for all $x\in M$ and all large enough $t>0$, \begin{equation}\label{cartharint}||u(\cdot,t)||_{L^{\infty}\left(B(x,\frac{1}{2}t^{1/p})\right)}\leq\frac{C}{t^{n/p}}\exp\left(-c\left(\frac{d(x,A)}{t^{1/p}}\right)^{\frac{p}{p-1}}\right),\end{equation} where $C, c$ are positive constants.
\end{theorem}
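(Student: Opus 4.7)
My plan is to deduce Theorem~\ref{upperCHint} from Theorem~\ref{mainthmint}. The estimate (\ref{mainre}) already carries the correct exponential factor, so only the volume prefactor $\mu(B(x,t^{1/p}))^{-1}$ needs to be replaced by $t^{-n/p}$. Accordingly, the argument splits into two tasks: (i) verify that the uniform Sobolev inequality implies the relative Faber--Krahn hypothesis of Theorem~\ref{mainthmint}; and (ii) extract a Euclidean-type volume lower bound $\mu(B(x,r))\geq c\,r^{n}$ at the relevant scale.

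For (i), I would apply H\"older's inequality with exponents $n/p$ and $n/(n-p)$ to a Lipschitz function $f$ supported in a precompact open set $\Omega\subset M$:
$$\int_{\Omega}|f|^{p}\,d\mu\leq\mu(\Omega)^{p/n}\,\|f\|_{L^{pn/(n-p)}(M)}^{p}\leq C_{S}^{p}\,\mu(\Omega)^{p/n}\int_{\Omega}|\nabla f|^{p}\,d\mu.$$
This yields the uniform Faber--Krahn bound $\lambda_{1,p}(\Omega)\geq C_{S}^{-p}\mu(\Omega)^{-p/n}$. Together with the Euclidean volume bound derived in (ii), this implies the relative Faber--Krahn inequality, so Theorem~\ref{mainthmint} is applicable and gives (\ref{mainre}).

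For (ii), I would invoke the classical chain of implications: the uniform $L^{p}$ Sobolev inequality of dimension $n$ on $M$ implies, via an appropriate substitution $f\mapsto|f|^{\alpha}$ combined with H\"older (equivalently, through Maz'ya's capacity--volume characterisation), the isoperimetric inequality $\mu(\partial\Omega)\geq c\,\mu(\Omega)^{(n-1)/n}$. Applying this to $\Omega=B(x,r)$ and integrating the resulting differential inequality $V'(r)\geq c\,V(r)^{(n-1)/n}$ for $V(r)=\mu(B(x,r))$ produces $V(r)\geq c'r^{n}$ for all $r$ beyond a uniform threshold, independent of $x$. Substituting $r=t^{1/p}$ into (\ref{mainre}) and using this lower bound delivers (\ref{cartharint}) for all $t$ beyond the corresponding threshold, accounting for the "large enough $t$" restriction in the statement.

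The main technical obstacle is step (ii): extracting the isoperimetric inequality from the $L^{p}$ Sobolev inequality when $p>1$ is classical but nontrivial, since a naive single application of the Sobolev inequality to a radial cutoff only yields a mixed-sign comparison between $\mu(B(x,r))$ and $\mu(B(x,r/2))$. Once the isoperimetric inequality is secured, the rest is a routine integration and a direct invocation of Theorem~\ref{mainthmint}.
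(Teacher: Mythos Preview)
There is a genuine gap in step (i). The uniform Sobolev inequality does \emph{not} imply the relative Faber--Krahn inequality, so you cannot invoke Theorem~\ref{mainthmint}. Take $M=\mathbb{H}^{n}$: it is Cartan--Hadamard, hence satisfies the uniform Sobolev inequality, but $\mu(B(x,r))$ grows exponentially in $r$. Since the paper records that relative Faber--Krahn implies volume doubling (see the Remark after (\ref{volumeregularity})), and doubling fails on $\mathbb{H}^{n}$, relative Faber--Krahn fails there too. Your specific claim that ``uniform Faber--Krahn plus the Euclidean volume \emph{lower} bound implies relative Faber--Krahn'' has the inequality the wrong way around: tracing the definitions, the uniform estimate $\lambda_{1,p}(D)\geq c\,\mu(D)^{-p/n}$ matches the relative form $\lambda_{1,p}(D)\geq r(B)^{-p}\bigl(\iota_{0}\,\mu(B)/\mu(D)\bigr)^{p/n}$ precisely when $\mu(B)\leq C\,r(B)^{n}$, a volume \emph{upper} bound. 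The isoperimetric argument in your step (ii) gives only a lower bound and is therefore of no help here.

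The paper circumvents this by not reducing to Theorem~\ref{mainthmint} at all. It applies the general Lemma~\ref{maintechlem} (which requires no global geometric hypothesis) directly. The uniform Sobolev inequality enters in two places: first, it is upgraded to a Nash-type inequality, which combined with the Caccioppoli estimate yields a differential inequality for $\Phi(t)=\int_{M}u^{\lambda}(\cdot,t)$ and hence the admissible choice $\gamma(t)\simeq t^{(\lambda-1)/\nu}$ in (\ref{gammaintro}); second, it gives $\iota(B)\mu(B)\geq c\,T^{1/\nu}$ straight from the definition (\ref{defiota}) with $S_{B}$ bounded. Substituting both into (\ref{uxTupperint}) produces the $T^{-n/p}$ prefactor without any appeal to doubling or relative Faber--Krahn.
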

The uniform Sobolev inequality holds, for example, if $M$ is a \textit{Cartan-Hadamard manifold} (see \cite{hoffman1974sobolev}).

It is quite surprising that similar estimates to (\ref{mainthmint}) and (\ref{cartharint}) hold for heat kernels on \textit{fractal spaces}. In this case $n$ denotes the \textit{fractal dimension} of the space and $p$ plays a role of the \textit{walk dimension} (see \cite{barlow2006diffusions}).

The main technical difficulties of proving the main results Theorem \ref{mainthmint} and Theorem \ref{upperCHint} arise due to the interplay between the non-linearity of (\ref{evoeq1}) and the geometry of the underlying space. These difficulties primarily appear in the proof of the following main lemma, which is worth mentioning here (cf. Lemma \ref{Gaussup}). 
\begin{lemma}\label{maintechlem}
Let $u$ be a bounded non-negative solution to (\ref{evoeq1}) in $M\times [0, \infty)$. Set $A=\textnormal{supp}~u_{0}$ and $\rho=c\max\left(d(x, A), Ct^{1/p}\right)$, where $c,C$ are positive constants. Denote with $A_{\rho}$ the $\rho$-neighborhood of $A$, that is, $A_{\rho}=\{x\in M:d(x, A)<\rho\}$. Let $\gamma$ be a \textit{regular function} (see Subsection \ref{daviesty} for the definition) satisfying, for fixed large enough $\lambda$ and for all $s>0$, \begin{equation}\label{gammaintro}\int_{A_{\rho}}u^{\lambda}(\cdot, s)\leq \frac{1}{\gamma(s)}.\end{equation} Then, for all $x\in M$ and all $t>0$, \begin{equation}\label{uxTupperint}||u(\cdot,t)||_{L^{\infty}\left(B(x,\frac{1}{2}t^{1/p})\right)}\leq\left(\frac{C_{B}}{\mu(B\left( x,t^{1/p}\right))\gamma(t)}\right)^{\frac{1}{\lambda}}\exp\left(-c^{\prime}\left(\frac{d(x,A)}{t^{1/p}}\right)^{\frac{p}{p-1}}\right),\end{equation} where $c^{\prime}, C_{B}>0$ and $C_{B}$ depends on the intrinsic geometry of $B\left( x,t^{1/p}\right)$.
\end{lemma}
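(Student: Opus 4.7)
The strategy is a two-step argument: first derive a weighted $L^{\lambda}$ estimate for $u(\cdot,t)$ on $B(x,t^{1/p})$ carrying the sub-Gaussian factor, via a Davies-type exponentially weighted energy method adapted to the $p$-Laplacian; then upgrade this integral bound to the pointwise bound (\ref{uxTupperint}) via a Moser $L^{\lambda}\to L^{\infty}$ mean value inequality on $B(x,t^{1/p})$, which is where the geometric constant $C_{B}/\mu(B(x,t^{1/p}))$ enters.

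\textbf{Step 1 (Davies-type weighted energy).} I would test the weak form of (\ref{evoeq1}) against $u^{\lambda-1}\eta^{p}e^{\lambda\xi}$, where $\eta(y,s)$ is a parabolic cutoff and $\xi(y,s)$ is a Lipschitz weight to be determined. Integration by parts, the chain rule for $\Delta_{p}$, and Young's inequality in the conjugate pair $(p,p/(p-1))$ produce a good dissipation term $\int|\nabla u^{\lambda/p}|^{p}\eta^{p}e^{\lambda\xi}$ that absorbs the cross term $|\nabla u|^{p-1}u|\nabla\xi|$, leaving a weighted energy inequality schematically of the form
\begin{equation*}
\sup_{s\in[0,t]}\int u^{\lambda}(\cdot,s)\,\eta^{p}\,e^{\lambda\xi}\,d\mu \;\leq\; C\int_{0}^{t}\!\!\int u^{\lambda}\,e^{\lambda\xi}\,\bigl(|\partial_{s}\eta^{p}|+\eta^{p}\,[\partial_{s}\xi+c_{0}|\nabla\xi|^{p/(p-1)}]_{+}\bigr)\,d\mu\,ds.
\end{equation*}
The weight $\xi$ is chosen as an approximate Lipschitz subsolution of the Hamilton-Jacobi inequality $\partial_{s}\xi+c_{0}|\nabla\xi|^{p/(p-1)}\leq 0$, depending on $d(\cdot,A)$ and normalized so that $\xi$ has controlled size on $A_{\rho}$ for all $s$ while $\xi(y,t)\geq c_{1}(d(x,A)/t^{1/p})^{p/(p-1)}$ for $y\in B(x,t^{1/p})$. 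The exponent $p/(p-1)$ is dictated by the Young pair and is precisely what generates the Barenblatt-type profile (\ref{barenborder}).

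\textbf{Step 2 (Closing via $\gamma$ and the mean value inequality).} Take $\eta$ with $\eta(\cdot,t)\equiv 1$ on $B(x,t^{1/p})$, supported spatially in $A_{\rho}$ and localized in time on $[s_{0},t]$ for some $s_{0}\asymp t/2$; the choice $\rho\asymp\max(d(x,A),t^{1/p})$ ensures $B(x,t^{1/p})\subset A_{\rho}$ (by the triangle inequality). Then the right-hand side reduces to an integral of $u^{\lambda}(\cdot,s)$ on $A_{\rho}$ over $s\in[s_{0},t]$, which by the hypothesis and the regularity of $\gamma$ (allowing $\gamma(s)\asymp\gamma(t)$ on this interval) is bounded by $C/\gamma(t)$. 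Combined with the lower bound $e^{\lambda\xi(\cdot,t)}\geq\exp(\lambda c_{1}(d(x,A)/t^{1/p})^{p/(p-1)})$ on $B(x,t^{1/p})$, this gives
\begin{equation*}
\int_{B(x,t^{1/p})}u^{\lambda}(\cdot,t)\,d\mu \;\leq\; \frac{C}{\gamma(t)}\exp\!\Bigl(-c_{2}\lambda(d(x,A)/t^{1/p})^{p/(p-1)}\Bigr).
\end{equation*}
A Moser-type mean value inequality $\|u(\cdot,t)\|_{L^{\infty}(B(x,\tfrac12 t^{1/p}))}^{\lambda}\leq (C_{B}/\mu(B(x,t^{1/p})))\int_{B(x,t^{1/p})}u^{\lambda}(\cdot,t)\,d\mu$, valid from the intrinsic Sobolev/Faber-Krahn structure of $B(x,t^{1/p})$, together with $\lambda$-th roots, now yields (\ref{uxTupperint}).

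\textbf{Main obstacle.} The rigid, delicate part is Step 1 in the non-linear setting. In the linear heat equation case Davies's trick reduces to the well-known operator bound $\|e^{\xi}P_{t}e^{-\xi}\|_{2\to 2}\leq e^{t\|\nabla\xi\|_{\infty}^{2}/2}$ used as a black box, whereas here one must actively carry the non-linear dissipation $|\nabla u^{\lambda/p}|^{p}$ as a reserve to absorb $|\nabla u|^{p-1}u|\nabla\xi|$, and this absorption closes only with the Young exponents $(p,p/(p-1))$. Any other split either loses the absorption or degrades the exponent of the sub-Gaussian, which is why $p/(p-1)$ is sharp and matches (\ref{barenborder}). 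A secondary technical issue is that the extremal Lipschitz solution of the Hamilton-Jacobi inequality, built from the distance function $d(\cdot,A)$, is not smooth, so a regularization/approximation is needed before the chain rule for $\Delta_{p}$ applies; this is presumably the role of the ``regular function'' notion defined in Subsection~\ref{daviesty}.
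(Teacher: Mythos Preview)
Your high-level architecture (Davies-type weighted $L^{\lambda}$ energy estimate, then a Moser $L^{\lambda}\to L^{\infty}$ mean-value inequality) is indeed the paper's, but two concrete points prevent the argument from closing as written.

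\emph{Wrong Hamilton--Jacobi exponent.} The differential inequality on the weight is $\partial_{s}\xi+C\,|\nabla\xi|^{p}\leq 0$, not $|\nabla\xi|^{p/(p-1)}$. The Caccioppoli inequality (Lemma~\ref{Caccioppoli}) produces an error term $|\nabla\eta|^{p}$; substituting $\eta=e^{\xi/p}\varphi$ gives $e^{\xi}|\nabla\xi|^{p}$ (see Lemma~\ref{integdec}). The exponent $\tfrac{p}{p-1}$ in the final sub-Gaussian is not the Hamilton--Jacobi exponent but the scaling of its self-similar solution $\xi\sim -\bigl(d/(T-s)^{1/p}\bigr)^{p/(p-1)}$.

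\emph{The one-step closure in Step~2 does not work.} You want $\eta$ supported in $A_{\rho}$ with $\eta\equiv 1$ on $B=B(x,t^{1/p})$, $\xi$ of ``controlled size'' on $A_{\rho}$ so that the right-hand side becomes $\lesssim 1/\gamma(t)$, and simultaneously $\xi(\cdot,t)\geq c_{1}(d(x,A)/t^{1/p})^{p/(p-1)}$ on $B$ to extract the sub-Gaussian on the left. If $B\subset A_{\rho}$ these requirements are incompatible: a weight bounded on $A_{\rho}$ is bounded on $B$. In the paper the constants $c,C$ in $\rho$ are chosen so that in the nontrivial regime $d(x,A)\gg t^{1/p}$ one has $B\subset A_{\rho}^{\,c}$, not $A_{\rho}$ (Corollary~\ref{CorD}). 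The fusion of the $\gamma$-decay with the sub-Gaussian is then achieved not by a single comparison but by a telescoping iteration (Lemma~\ref{upperArcIntLamb}): from Lemma~\ref{choicofxi} one gets
\[
\int_{A_{\rho_{k-1}}^{c}}u^{\lambda}(\cdot,t_{k-1})\ \leq\ \int_{A_{\rho_{k}}^{c}}u^{\lambda}(\cdot,t_{k})\ +\ \frac{1}{\gamma(t_{k})}\exp\Bigl(-\zeta\bigl(\tfrac{\rho_{k-1}-\rho_{k}}{(t_{k-1}-t_{k})^{1/p}}\bigr)^{\frac{p}{p-1}}\Bigr)
\]
along sequences $\rho_{k}\downarrow 0$, $t_{k}=t/\theta^{k}\downarrow 0$, and sums. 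The terminal term vanishes because $u_{0}=0$ on $A^{c}$; the ``regularity'' of $\gamma$---which in the paper means at-most-polynomial growth, \emph{not} a smoothing device for the distance function---is exactly what yields $\gamma(t_{k})^{-1}\leq \Theta/\gamma(t)$ uniformly in $k$. This iteration is the essential missing ingredient in your proposal.

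A smaller point: the mean-value inequality used (Lemma~\ref{LemMoser}, equation~(\ref{v3})) is a space-time estimate $\|u\|_{L^{\infty}(Q')}^{\lambda}\lesssim (T\,\iota(B)\mu(B))^{-1}\int_{Q}u^{\lambda}$ on the cylinder $Q=B\times[0,T]$, not a fixed-time bound; accordingly the Davies--Gaffney inequality is integrated over $t\in[(1-2^{-p})T,T]$ before being combined with it (Lemma~\ref{Gaussup}).
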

Let us emphasize that this lemma is valid for an arbitrary complete Riemannian manifold, and the estimate (\ref{uxTupperint}) depends on the \emph{local} structure of the manifold inside the ball $B\left( x,t^{1/p}\right)$.
The rather mild assumptions on the global geometry of $M$ in Theorem \ref{mainthmint} and Theorem \ref{upperCHint} then enable us to compute the function $\gamma$ from (\ref{gammaintro}) and obtain the estimates (\ref{mainre}) and (\ref{cartharint}).

Let us describe the structure of the present paper.

In Section \ref{secweak} we define the notion of weak solutions of the equation (\ref{evoeq}).

In Section \ref{intestsec} we prove in Lemma \ref{integdec} (Subsection \ref{intmaxp}) an \textit{integral maximum principle} for subsolutions of (\ref{evoeq1}). The idea of the integral maximum principle goes back to Aronson in \cite{Aronson1967BoundsFT, aronson1968non}. In the present setting, this principle says the following. Let $u$ be a non-negative bounded subsolution of (\ref{evoeq1}), $\lambda>0$ be large enough and $\xi(x, t)$ be a non-positive locally Lipschitz function in $M\times[0, \infty)$ satisfying the differential inequality \begin{equation*}\partial_{t}\xi+C_{p, \lambda}|\nabla \xi|^{p}\leq 0,\end{equation*} where $C_{p, \lambda}$ is a positive constant.
Then the integral \begin{equation*}J(t)=\int_{M}u^{\lambda}(\cdot, t)e^{\xi(\cdot, t)}\end{equation*} is non-increasing in $t>0$.
The proof utilizes the in our case specific shape of the \textit{Caccioppoli type inequality} (Lemma \ref{Caccioppoli}). 

For a specific choice of $\xi$ (see Lemma \ref{choicofxi}), we obtain in Subsection \ref{daviesty} a \textit{Davies-Gaffney type inequality} (Corollary \ref{CorD}). This inequality gives, for large enough $\lambda>0$, a \textit{Sub-Gaussian} upper bound for the $L^{\lambda}$-norm of non-negative bounded subsolutions of (\ref{evoeq1}) and is the first ingredient for obtaining the estimate (\ref{uxTupperint}) of Lemma \ref{maintechlem}.

In Section \ref{secmvi} we prove the second ingredient $-$ a $L^{\lambda}$-\textit{mean value inequality} for non-negative bounded subsolutions of (\ref{evoeq1}) (Lemma \ref{LemMoser}). The proof essentially follows the method from \cite{grigor2023finite}, where particular case of a mean value inequality was proved in the case $q(p-1)\geq 1$ by using the classical Moser iteration argument (\cite{Moser}).

In Section \ref{secgauss} we finally prove Lemma \ref{maintechlem}, by combining the Davies-Gaffney type inequality and the mean value inequality (see Lemma \ref{Gaussup}). In the linear heat equation this method of combining these two inequalities was used by Davies in \cite{davies1992heat} as well as it was used in \cite{grigor1999estimates} and \cite{coulhon1998random} to prove an estimate of the type (\ref{liyau}).

In Section \ref{appendixla} (Appendix) we discuss the aforementioned model manifold.

We denote by $c, c^{\prime}, C, C^{\prime}$ positive constants whose value might change at each occurance. For functions $f$ and $g$ we also use the notation $f\simeq g$ if there exists a positive constant $C$ such that $C^{-1}g\leq f\leq Cg$. Similarly, we define the symbol $\lesssim$.

\begin{acknowledgement}
The author would like to thank Alexander Grigor'yan for many helpful discussions.
\end{acknowledgement}

\section{Weak subsolutions}
\label{secweak}
\subsection{Definition and basic properties}
\label{secweakdef}

We consider in what follows the following evolution equation on a Riemannian
manifold $M$:%
\begin{equation}
\partial _{t}u=\Delta _{p}u^{q}.  \label{olddtv}
\end{equation}%
By a \textit{subsolution} of (\ref{olddtv}) we mean a non-negative function $u$
satisfying 
\begin{equation}
\partial _{t}u\leq \Delta _{p}u^{q}.\label{subdtv}
\end{equation}%
in a certain weak sense as explained below. 

We assume throughout that 
\begin{equation*}
p>1\ \ \text{and}\ \ \ q>0.
\end{equation*}%

Let $\Omega$ be an open subset of $M$ and $I$ be an interval in $[0, \infty)$.

\begin{definition}
\normalfont
We say that a non-negative function $u=u(x, t)$ is a \textit{weak
subsolution} of (\ref{olddtv}) in $\Omega\times I$, if
\begin{equation}  \label{defvonsoluq}
u\in C\left(I; L^{1}(\Omega)\right)\cap
\left\{u^{q}\in L_{loc}^{p}\left(I; W^{1, p}(\Omega)\right)\right\}
\end{equation}
and (\ref{subdtv}) holds weakly in $\Omega\times I$, which means that for all $t_{1}, t_{2}\in I$ with $t_{1}<t_{2}$, and all non-negative functions 
\begin{equation}  \label{defvontestsoluq}
\psi\in W_{loc}^{1, \infty}\left(I;
L^{\infty}(\Omega)\right)\cap L_{loc}^{p}\left(I; W_{0}^{1,
p}(\Omega)\right),
\end{equation}
we have 
\begin{equation}  \label{defvonweaksolq}
\left[\int_{\Omega}{u\psi}\right]_{t_{1}}^{t_{2}}+\int_{t_{1}}^{t_{2}}{%
\int_{\Omega}{-u\partial_{t}\psi+|\nabla u^{q}|^{p-2}\langle\nabla u^{q},
\nabla \psi\rangle}}\leq 0.
\end{equation}
\end{definition}

For different notions of weak solutions see also \cite{dibenedetto2011harnack, sturm2017existence}. For existence and uniqueness results for the Cauchy problem for (\ref{olddtv}) with the above notion of weak solutions, see for example in the euclidean setting \cite{ishige1996existence, ivanov1997regularity, ladyzhenskaya1968linear, raviart1970resolution} and on Riemannian manifolds \cite{grillo2018porous}.

If $u$ is of the class (\ref{defvonsoluq}), we define 
\begin{equation*}
\nabla u:=\left\{ 
\begin{array}{ll}
q^{-1}u^{1-q}\nabla(u^{q}), & u>0, \\ 
0, & u=0.%
\end{array}%
\right.
\end{equation*}

\begin{remark}
\normalfont
Note that it follows from (\ref{defvonsoluq}) and (\ref{defvontestsoluq})
that the integrals in (\ref{defvonweaksolq}) are finite. Indeed, we have by
H\"older's inequality 
\begin{align*}
\int_{t_{1}}^{t_{2}}{\int_{\Omega}{|\nabla u^{q}|^{p-2}\left|\langle\nabla u^{q},
\nabla \psi\rangle\right|}}&\leq \int_{t_{1}}^{t_{2}}{\int_{\Omega}{|\nabla
u^{q}|^{p-1}|\nabla \psi|}} \\
&\leq \left(\int_{t_{1}}^{t_{2}}{\int_{\Omega}{\left(|\nabla
u^{q}|\right)^{p}}}\right)^{\frac{p-1}{p}}\left(\int_{t_{1}}^{t_{2}}{%
\int_{\Omega}{|\nabla \psi|^{p}}}\right)^{\frac{1}{p}}.
\end{align*}
\end{remark}

From now on, let us make the assumption that
\begin{equation*}\label{deltazero}
	(p-1)q-1=0,
\end{equation*}%
that is \begin{equation}\label{defq}q=\frac{1}{p-1}>0.\end{equation}
Hence, the evolution equation (\ref{olddtv}) becomes \begin{equation}\label{dtv}\partial _{t}u=\Delta _{p}\left(u^{\frac{1}{p-1}}\right).\end{equation}

Everywhere $M$ denotes a geodesically complete Riemannian manifold.
Let $\mu $ denote the Riemannian measure on $M$. For simplicity of
notation, we omit in almost all integrations the notation of measure. All
integration in $M$ is done with respect to $d\mu $, and in $M\times \mathbb{R%
}$ -- with respect to $d\mu dt$, unless otherwise specified.

\subsection{Caccioppoli type inequality}

Let $\Omega $ be an open subset of $M$ and $I$ be an interval in $[0, \infty)$.
\begin{lemma}[\cite{grigor2023finite}]\label{Caccioppoli}
\label{Lem1}Let $v=v\left( x,t\right) $ be a bounded
non-negative subsolution to \emph{(\ref{dtv})} in a cylinder $\Omega\times I$.
Let $\eta \left( x,t\right) $ be a locally Lipschitz non-negative bounded
function in $\Omega\times I$ such that $\eta \left( \cdot ,t\right) $ has
compact support in $\Omega $ for all $t\in I$. Fix some real $\lambda $
such that 
\begin{equation}\label{assumponlamb}
	\lambda \geq \max\left(p, \frac{p}{p-1}\right) 
\end{equation}%
and set 
\begin{equation*}
	\alpha =\frac{\lambda }{p}.  \label{alpha}
\end{equation*}%
Choose $t_{1},t_{2}\in I$ such that $t_{1}<t_{2}$ and set $Q=\Omega \times \left[ t_{1},t_{2}%
\right] $. Then%
\begin{equation}
\left[ \int_{\Omega }v^{\lambda }\eta ^{p}\right] _{t_{1}}^{t_{2}}+c_{1}%
\int_{Q}\left\vert \nabla \left( v^{\alpha }\eta \right) \right\vert
^{p}\leq \int_{Q}\left[ p\eta ^{p-1}\partial _{t}\eta
+c_{2}\left\vert \nabla \eta \right\vert ^{p}\right]v^{\lambda } ,
\label{veta1}
\end{equation}%
where $c_{1},c_{2}$ are positive constants depending on $p$ and $\lambda $ given by
\begin{equation}
c_{1}=\frac{\lambda \left( \lambda -1\right) }{2^{p}(p-1)^{p-1}}\alpha ^{-p}.  \label{c1}
\end{equation}
and
\begin{equation}
c_{2}=\frac{\lambda \left( \lambda -1\right) }{2(p-1)^{p-1}}\alpha ^{-p}+\dfrac{%
\lambda 2^{p-1}p^{p}}{\left( \lambda -1\right) ^{p-1}\left( p-1\right) ^{p-1}}.  \label{c2}
\end{equation}
\end{lemma}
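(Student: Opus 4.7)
The natural approach is to insert the test function $\psi = v^{\lambda-1}\eta^p$ into the weak-subsolution inequality (\ref{defvonweaksolq}) and rearrange. Since $v$ is bounded and $\eta(\cdot,t)$ has compact support in $\Omega$, $\psi$ formally lies in the admissible class (\ref{defvontestsoluq}), modulo a standard Steklov-averaging step to handle the $t$-regularity of $v^{\lambda-1}$ which I suppress.

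I first process the time term. Expanding $-u\,\partial_t\psi = -(\lambda-1)v^{\lambda-1}\partial_t v\,\eta^p - p\,v^\lambda\eta^{p-1}\partial_t\eta$ and using $(\lambda-1)v^{\lambda-1}\partial_t v = \tfrac{\lambda-1}{\lambda}\partial_t(v^\lambda)$, I integrate the $\partial_t(v^\lambda)\eta^p$ piece by parts in time and combine it with the original boundary term $[\int u\psi]_{t_1}^{t_2}=[\int v^\lambda\eta^p]_{t_1}^{t_2}$. After multiplying through by $\lambda$, this yields the boundary contribution $[\int v^\lambda\eta^p]_{t_1}^{t_2}$ on the left of (\ref{veta1}) together with the $p\int_Q v^\lambda\eta^{p-1}\partial_t\eta$ term on the right.

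For the flux term I rewrite everything in terms of $v^\alpha$ with $\alpha=\lambda/p$: using $\nabla v^q=\tfrac{q}{\alpha}v^{q-\alpha}\nabla v^\alpha$ and $\nabla(v^{\lambda-1}\eta^p)=\tfrac{\lambda-1}{q}v^{\lambda-1-q}\eta^p\nabla v^q+pv^{\lambda-1}\eta^{p-1}\nabla\eta$, the crucial algebraic identities
\[
\lambda-1-q+p(q-\alpha)=0,\qquad \lambda-(p-1)\alpha=\alpha,
\]
which both follow from $(p-1)q=1$ and $p\alpha=\lambda$, cause all powers of $v$ to cancel. The good term emerges as $\tfrac{\lambda-1}{(p-1)^{p-1}}\alpha^{-p}\,\eta^p|\nabla v^\alpha|^p$, and the cross term as $p(q/\alpha)^{p-1}v^\alpha\eta^{p-1}|\nabla v^\alpha|^{p-2}\langle\nabla v^\alpha,\nabla\eta\rangle$. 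Young's inequality
\[
(\eta|\nabla v^\alpha|)^{p-1}(v^\alpha|\nabla\eta|)\le \varepsilon\,\eta^p|\nabla v^\alpha|^p + C_\varepsilon\,v^\lambda|\nabla\eta|^p,
\]
with $\varepsilon$ chosen so the cross term absorbs exactly half of the good term, followed by the pointwise bound $\eta^p|\nabla v^\alpha|^p\ge 2^{-(p-1)}|\nabla(v^\alpha\eta)|^p - v^\lambda|\nabla\eta|^p$ coming from $\nabla(v^\alpha\eta)=\eta\nabla v^\alpha+v^\alpha\nabla\eta$, delivers (\ref{veta1}).

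I expect the main obstacle to be the careful bookkeeping through the absorption step: one must pick $\varepsilon=(\lambda-1)/(2p\alpha)$ so that the resulting Young constant, multiplied by the factor $2^{p-1}$ from converting $\eta^p|\nabla v^\alpha|^p$ into $|\nabla(v^\alpha\eta)|^p$, assembles into exactly the formula (\ref{c2}) for $c_2$, while the halved good term contributes the $\tfrac{\lambda(\lambda-1)}{2^p(p-1)^{p-1}}\alpha^{-p}$ of (\ref{c1}). The hypothesis $\lambda\ge\max(p,\,p/(p-1))$ is used precisely to guarantee $\lambda-1-q\ge 0$ and $\alpha\ge 1$, so all the exponents of $v$ appearing along the way are non-negative; a minor secondary technicality is to first carry out the computation with $v$ replaced by $v+\delta$ on the set $\{v>0\}$ and then pass to the limit $\delta\downarrow 0$, in order to handle the zero set of $v$ where $\nabla v$ is defined by convention.
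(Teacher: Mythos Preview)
The paper does not prove this lemma; it is quoted from \cite{grigor2023finite}, and the text following the statement only checks that the integrals in (\ref{veta1}) are finite. Your proposal is the standard Caccioppoli argument and is correct: test with $\psi=v^{\lambda-1}\eta^{p}$, use the identities $\lambda-1-q+p(q-\alpha)=0$ and $\lambda-(p-1)\alpha=\alpha$ (both consequences of $(p-1)q=1$ and $p\alpha=\lambda$) to make the powers of $v$ disappear, absorb half of the good term $\eta^{p}|\nabla v^{\alpha}|^{p}$ via Young, and then pass from $\eta^{p}|\nabla v^{\alpha}|^{p}$ to $|\nabla(v^{\alpha}\eta)|^{p}$ at the cost of the extra $2^{p-1}$. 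This reproduces (\ref{c1}) exactly, and (\ref{c2}) up to the precise form of Young's inequality one uses; your bookkeeping and your explanation of the role of $\lambda\ge\max(p,p/(p-1))$ (nonnegative exponents $\lambda-1-q\ge 0$, $\alpha\ge 1$) are on target.

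One small remark: the time-regularity issue you flag is genuine, since $\psi=v^{\lambda-1}\eta^{p}$ is not a priori in $W^{1,\infty}_{loc}(I;L^{\infty}(\Omega))$; the Steklov-averaging (or a mollification in $t$) you mention is exactly what is needed to justify the formal computation $[\int v^{\lambda}\eta^{p}]_{t_{1}}^{t_{2}}$, and is indeed routine. Likewise your $\delta$-shift $v\mapsto v+\delta$ to avoid division by zero in the chain rule is the usual device.
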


Let us recall why all integrals in (\ref{veta1}) are well-defined. First, observe that by (\ref{assumponlamb}), \begin{equation}\label{lageq2}\lambda\geq \max\left(p, \frac{p}{p-1}\right)\geq 2.\end{equation} 
Since $v(\cdot, t)\in L^{1}(\Omega)$ and $v$ is bounded, it follows that for any $t\in I$, \begin{equation}\label{vlambfin}\int_{\Omega }v^{\lambda }\eta^{p}\leq \textnormal{const}~||v||^{\lambda-1}_{L^{\infty}(\Omega)}\int_{\Omega}v<\infty.\end{equation}
Also, we see that, \begin{equation}\label{finitesig}\int_{Q}\left[ p\eta ^{p-1}\partial _{t}\eta+c_{2}\left\vert \nabla \eta \right\vert ^{p}\right]v^{\lambda }\leq \textnormal{const}~ ||v||^{\lambda-\frac{p}{p-1}}_{L^{\infty}(Q)}\int_{Q}v^{\frac{p}{p-1} },\end{equation} which is finite by (\ref{defvonsoluq}). 
Using that $\alpha\geq \frac{1}{p-1}$, we get that the function $\Phi(s)=s^{\alpha(p-1)}$ is Lipschitz on any bounded interval in $[0, \infty)$. Thus, $v^{\alpha}=\Phi(v^{\frac{1}{p-1}})\in W^{1, p}(\Omega)$ and $$\left|\nabla v^{\alpha}\right|=\left|\Phi^{\prime}(v^{\frac{1}{p-1}})\nabla v^{q}\right|\leq C\left|\nabla v^{\frac{1}{p-1}}\right|,$$ whence \begin{equation}\label{valpha0}\int_{Q}\left\vert \nabla \left( v^{\alpha }\eta \right) \right\vert^{p}\leq C^{\prime}\int_{Q}\left\vert \nabla  v^{\alpha } \right\vert^{p}+v^{\alpha p}\leq C^{\prime\prime}\int_{Q}\left\vert \nabla  v^{\frac{1}{p-1} } \right\vert^{p}+v^{\lambda},\end{equation} which is finite using the same argument as in (\ref{finitesig}) since $\lambda=\alpha p$. In particular, (\ref{valpha0}) implies that \begin{equation}\label{valpha}v^{\alpha}\eta\in L_{loc}^{p}\left(I; W_{0}^{1, p}(\Omega)\right).\end{equation}

\begin{remark}
\label{Rempc1}
Note that (\ref{c1}) yields for all $\lambda\geq 2$, \begin{equation}\label{pc1}\frac{p}{c_{1}}=\frac{\lambda^{p}q^{1-p}2^{p}}{\lambda \left( \lambda -1\right) p^{p-1}}\leq C_{p}\lambda^{p-2},\end{equation}
where $C_{p}$ depends only on $p$ but does not depend on $\lambda $.
Therefore, if $p\leq 2$, the ratio $\frac{p}{c_{1}}$ is bounded by a constant $C_{p}$ that depends on $p$ but does not depend on $\lambda$. If $p>2$, then $\frac{p}{c_{1}}$ grows with $\lambda$ as in (\ref{pc1}).
\end{remark}

\begin{remark}
\label{Remc1c2}For the future we also need the ratio $\frac{c_{2}}{c_{1}}$. It
follows from (\ref{c1}) and (\ref{c2}) that%
\begin{align*}
\frac{c_{2}}{c_{1}}=2^{p-1}+\dfrac{2^{2p-1}\lambda ^{p}}{\left( \lambda -1\right) ^{p}},
\end{align*}%
where we have used that $\alpha p=\lambda $. It follows that, for all $\lambda \geq 2$, 
\begin{equation*}
\dfrac{c_{2}}{c_{1}}\leq C_{p},
\end{equation*}%
where $C_{p}$ depends on $p$.
\end{remark}

\begin{remark}
\label{Remc2}Let us also obtain an upper bound of $c_{2}$. Using 
$\alpha =\frac{\lambda }{p}$
we get
\begin{equation*}
c_{2}=\frac{1}{2}\frac{\lambda \left( \lambda -1\right) }{\lambda
^{p}\left( p
-1\right) ^{p-1}}p^{p}+\dfrac{\lambda 2^{p-1}p^{p}}{\left( \lambda
-1\right) ^{p-1}\left( p
-1\right) ^{p-1}}.
\end{equation*}%
As $\lambda \geq 2$, it follows that 
\begin{equation}
c_{2}\leq C_{p}\lambda ^{2-p}.  \label{c2<}
\end{equation}%
Of course, if $p\geq 2$ then $c_{2}$ is uniformly bounded by a constant $%
C_{p}$ independently of $\lambda $, but if $p<2$ then $c_{2}$ may
grow with $\lambda $ as in (\ref{c2<}).
\end{remark}

\begin{lemma}
\label{monl1}\cite{Grigor’yan2024}
Let $v=v\left( x,t\right) $ be a bounded non-negative
subsolution to \emph{(\ref{dtv})} in $M\times[0, T)$. If $\lambda\geq 1$, including $\lambda=\infty$, then the function 
\begin{equation*}
t\mapsto \left\Vert v(\cdot ,t)\right\Vert _{L^{\lambda}(M)}
\end{equation*}%
is monotone decreasing in $[0, T)$.
\end{lemma}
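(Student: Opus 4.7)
The strategy is to exploit the Caccioppoli-type inequality of Lemma \ref{Caccioppoli} using a cutoff function $\eta(x)$ depending only on the spatial variable, so that the term $p\eta^{p-1}\partial_{t}\eta$ in the right-hand side vanishes identically. Then the idea is to let the spatial support of $\eta$ expand to exhaust $M$ and conclude by a dominated convergence argument.

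\textbf{Main case $\lambda \geq \max(p,p/(p-1))$.} Fix a reference point $x_{0}\in M$ and a smooth Lipschitz cutoff $\eta_{R}$ with $\eta_{R}\equiv 1$ on $B(x_{0},R)$, $\text{supp}\,\eta_{R}\subset B(x_{0},2R)$ and $|\nabla \eta_{R}|\leq C/R$. Fix $t_{1}<t_{2}$ in $[0,T)$. Applying Lemma \ref{Caccioppoli} with this time-independent $\eta$ and discarding the non-negative gradient term on the left yields
\begin{equation*}
\int_{M} v^{\lambda}(\cdot,t_{2})\eta_{R}^{p}-\int_{M} v^{\lambda}(\cdot,t_{1})\eta_{R}^{p}\leq c_{2}\int_{t_{1}}^{t_{2}}\int_{M}|\nabla \eta_{R}|^{p}v^{\lambda}\leq \frac{c_{2}C^{p}}{R^{p}}\int_{t_{1}}^{t_{2}}\int_{M}v^{\lambda}.
\end{equation*}
Because $v$ is globally bounded and $v(\cdot,t)\in L^{1}(M)$ by hypothesis, we have $v^{\lambda}(\cdot,t)\in L^{1}(M)$ with integral uniformly bounded on $[t_{1},t_{2}]$ via the interpolation $\int v^{\lambda}\leq \|v\|_{\infty}^{\lambda-1}\int v$ (using the continuity assertion in (\ref{defvonsoluq})). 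Hence the right-hand side tends to $0$ as $R\to\infty$, while the left-hand side converges by monotone convergence to $\|v(\cdot,t_{2})\|_{L^{\lambda}}^{\lambda}-\|v(\cdot,t_{1})\|_{L^{\lambda}}^{\lambda}$. This gives the desired monotonicity on $[\max(p,p/(p-1)),\infty)$.

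\textbf{Extension to $\lambda=\infty$ and to $1\leq \lambda<\max(p,p/(p-1))$.} For $\lambda=\infty$, since $v(\cdot,t)\in L^{1}(M)\cap L^{\infty}(M)$, a standard fact gives $\|v(\cdot,t)\|_{L^{\lambda}}\to\|v(\cdot,t)\|_{L^{\infty}}$ as $\lambda\to\infty$, and the pointwise limit in $\lambda$ of decreasing functions of $t$ remains decreasing. For the remaining range $1\leq \lambda<\max(p,p/(p-1))$, I would treat $\lambda=1$ directly by substituting $\psi(x,t)=\eta_{R}^{p}(x)$ into the weak formulation (\ref{defvonweaksolq}); the non-trivial term $\int_{t_{1}}^{t_{2}}\int_{M}|\nabla v^{q}|^{p-2}\langle \nabla v^{q},\nabla \eta_{R}^{p}\rangle$ is controlled by H\"older's inequality and the global integrability $v^{q}\in L^{p}_{loc}(I;W^{1,p}(M))$, which via dominated convergence forces this term to $0$ as $R\to\infty$. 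For $1<\lambda<\max(p,p/(p-1))$, one re-derives the Caccioppoli inequality after regularising $v$ by $v+\varepsilon$ (so that $(v+\varepsilon)^{\lambda-1}$ becomes an admissible Lipschitz test function), obtains the analog of Step~1 for the regularised quantity, and then passes to $\varepsilon\to 0$.

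\textbf{Main obstacle.} The cleanest step is the $\lambda\geq \max(p,p/(p-1))$ case, where everything reduces to controlling the right-hand side of Lemma \ref{Caccioppoli}. The genuinely delicate range is $\lambda\in[1,\max(p,p/(p-1)))$, because the natural test function $v^{\lambda-1}\eta^{p}$ behind the derivation of Lemma \ref{Caccioppoli} fails to be Lipschitz at $v=0$ when $\lambda<2$. The $\varepsilon$-regularisation resolves this in principle, but requires care to verify that each of the three terms (the boundary term, the energy term and the cutoff term) converges correctly as $\varepsilon\to 0$, since the lower-order dependence on $v$ makes the limiting procedure less automatic than in the $\lambda\geq 2$ setting.
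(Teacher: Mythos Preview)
The paper does not give its own proof of this lemma; it is stated with a citation to \cite{Grigor’yan2024}, so there is nothing in the paper to compare against. I therefore comment on the proposal on its own merits.

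Your treatment of the range $\lambda\ge\max\bigl(p,\tfrac{p}{p-1}\bigr)$ is correct and is exactly the specialisation of the paper's integral maximum principle (Lemma~\ref{integdec}) to $\xi\equiv 0$; the passage to $\lambda=\infty$ by letting $\lambda\to\infty$ is also fine.

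The gap is in the $\lambda=1$ step, which you present as routine. You claim that the flux term
\[
\int_{t_1}^{t_2}\!\int_M|\nabla v^{q}|^{p-2}\langle\nabla v^{q},\nabla\eta_R^{p}\rangle
\]
tends to $0$ ``by H\"older's inequality and dominated convergence'' using $v^{q}\in L^{p}_{\mathrm{loc}}\bigl(I;W^{1,p}(M)\bigr)$. But the only available pointwise majorant is $|\nabla v^{q}|^{p-1}$, which from $\nabla v^{q}\in L^{p}(M\times[t_1,t_2])$ lies merely in $L^{p/(p-1)}$, not in $L^{1}$, when $\mu(M)=\infty$. Equivalently, the H\"older split produces the factor $\|\nabla\eta_R\|_{L^{p}(B(2R)\setminus B(R))}\asymp R^{-1}\mu(B(2R))^{1/p}$, and on manifolds of fast volume growth (for instance the Cartan--Hadamard manifolds explicitly covered by Theorem~\ref{upperCH}) this can diverge faster than $\|\nabla v^{q}\|_{L^{p}(B(2R)\setminus B(R))}^{\,p-1}$ decays. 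So the limit $R\to\infty$ does not close from the energy--cutoff method alone. The same obstruction undermines the $\varepsilon$-regularised route you propose for $1<\lambda<\max\bigl(p,\tfrac{p}{p-1}\bigr)$: after replacing $v$ by $v+\varepsilon$ the Caccioppoli right-hand side contains $\int|\nabla\eta_R|^{p}(v+\varepsilon)^{\lambda}\ge \varepsilon^{\lambda}\int|\nabla\eta_R|^{p}$, again uncontrolled without a volume hypothesis. The ``main obstacle'' you single out (loss of Lipschitz regularity of $v^{\lambda-1}$ near $\{v=0\}$) is real but secondary; the volume-growth issue is what actually breaks your argument in the small-$\lambda$ range. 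The $L^{1}$-contraction for doubly nonlinear equations holds, but its proof in the cited reference proceeds through comparison/accretivity arguments rather than the direct test-function computation you outline.
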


\section{Integral estimates for subsolutions}
\label{intestsec}

Let $M$ be a connected Riemannian manifold.
Let $d$ be the geodesic distance on $M$. For any $%
x\in M$ and $r>0$, denote by $B(x,r)$ the geodesic ball of radius $r$
centered at $x$, that is,%
\begin{equation*}
	B(x,r)=\left\{ y\in M:d(x,y)<r\right\} .
\end{equation*}

\subsection{Integral maximum principle}
\label{intmaxp}

\begin{lemma}\label{integdec}
Let $u$ be a non-negative bounded subsolution of (\ref{dtv}) in $M\times [0, T)$, where $0<T\leq \infty$. Fix some $\lambda\geq \max\left(p, \frac{p}{p-1}\right)$. Let $\xi(x, t)$ be a non-positive locally Lipschitz function in $M\times[0, T)$ and assume that the partial derivative $\partial_{t}\xi$ satisfies the inequality \begin{equation}\label{condfordec}\partial_{t}\xi+c_{2}2^{p-1}p^{-p}|\nabla \xi|^{p}\leq 0,\end{equation} where $c_{2}$ is given by (\ref{c2}).
Then the function \begin{equation}\label{defJ}J(t)=\int_{M}u^{\lambda}(\cdot, t)e^{\xi(\cdot, t)}\end{equation} is non-increasing in $t\in [0, T)$.
\end{lemma}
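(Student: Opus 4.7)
The plan is to apply the Caccioppoli-type inequality (Lemma~\ref{Caccioppoli}) with the test function $\eta(x,t) = \varphi_R(x)\, e^{\xi(x,t)/p}$, where $\varphi_R$ is a non-negative Lipschitz spatial cutoff with $\varphi_R = 1$ on $B(x_0, R)$, $\mathrm{supp}\,\varphi_R \subset B(x_0, 2R)$, and $|\nabla \varphi_R| \leq 2/R$, for a fixed $x_0 \in M$. The factor $e^{\xi/p}$ is chosen precisely so that $\eta^p$ reproduces the weight $e^\xi$ in $J(t)$. Because $\xi$ is locally Lipschitz and $\varphi_R$ is compactly supported in $M$, the function $\eta$ is non-negative, bounded, locally Lipschitz, and has compact support at each $t$, so it is admissible in Lemma~\ref{Caccioppoli}.

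A direct computation using $\partial_t\varphi_R = 0$ yields $\eta^p = \varphi_R^p e^\xi$ and $p\eta^{p-1}\partial_t \eta = \varphi_R^p e^\xi \partial_t \xi$, while the elementary inequality $(a+b)^p \leq 2^{p-1}(a^p + b^p)$ gives
\[
|\nabla \eta|^p \leq 2^{p-1}\bigl(|\nabla\varphi_R|^p e^\xi + p^{-p}\varphi_R^p e^\xi |\nabla \xi|^p\bigr).
\]
Substituting into (\ref{veta1}) and discarding the non-negative term $c_1\int_Q|\nabla(v^\alpha\eta)|^p$ on the left, I would group the right-hand side so as to isolate the exact combination appearing in (\ref{condfordec}):
\[
\Bigl[\int_M v^\lambda \varphi_R^p e^\xi\Bigr]_{t_1}^{t_2} \leq \int_{t_1}^{t_2}\!\!\int_M \varphi_R^p e^\xi v^\lambda \Bigl[\partial_t \xi + \frac{c_2\, 2^{p-1}}{p^p}|\nabla \xi|^p\Bigr] + c_2\, 2^{p-1} \int_{t_1}^{t_2}\!\!\int_M |\nabla \varphi_R|^p e^\xi v^\lambda.
\]
By hypothesis (\ref{condfordec}) the first integrand on the right is non-positive, so only the annular error term, supported in $B(x_0,2R)\setminus B(x_0,R)$ and of order $R^{-p}$, survives.

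Finally I would send $R \to \infty$. The left-hand side converges to $J(t_2) - J(t_1)$ by monotone convergence (with the case $J(t_1) = \infty$ being trivial). The main obstacle is showing that the annular error vanishes, which is not immediate from boundedness of $v$ alone since spherical shells in $M$ may have arbitrarily large volume. The key observations are that $\xi \leq 0$ gives $e^\xi \leq 1$ and that, under the assumption $J(t_1) < \infty$, the inequality itself can be bootstrapped: it first propagates the estimate $J(t) \leq J(t_1)$ over a short subinterval, from which one derives $\int_{t_1}^{t_2}\!\!\int_M e^\xi v^\lambda < \infty$ by Fubini, and dominated convergence then forces the annular integral to zero. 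Iterating along a partition of $[t_1, t_2]$ concludes that $J(t_2) \leq J(t_1)$, which is the desired monotonicity.
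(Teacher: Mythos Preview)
Your approach is essentially identical to the paper's: both insert $\eta = \varphi\, e^{\xi/p}$ into the Caccioppoli inequality, compute the same bounds on $p\eta^{p-1}\partial_t\eta$ and $|\nabla\eta|^p$, invoke (\ref{condfordec}) to kill the main term, and then let the cutoff exhaust $M$. Your bootstrapping for the vanishing of the annular error is correct but unnecessary: since $u$ is a subsolution in $M\times[0,T)$, the definition (\ref{defvonsoluq}) already gives $u(\cdot,t)\in L^1(M)$, so $\int_M u^\lambda(\cdot,t)\leq \|u\|_\infty^{\lambda-1}\|u(\cdot,t)\|_{L^1(M)}$ is finite and continuous in $t$, and dominated convergence disposes of the error term directly (this is the argument the paper has in mind when it writes ``sending $B\to M$'').
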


\begin{proof}
Since $\xi$ is non-positive, we see that the integral in (\ref{defJ}) is finite using the same arguments as in (\ref{vlambfin}). 
Let $\varphi$ be a cut-off function of some open geodesic ball $B^{\prime}$ such that $\varphi$ has compact support in some larger ball $B$. Note that the balls are precompact by the completeness of $M$. Then set $\eta(x, t)=e^{\frac{\xi(x, t)}{p}}\varphi(x)$ so that $$\eta^{p}=e^{\xi}\varphi^{p}.$$

By the Caccioppoli type inequality (\ref{veta1}), we have for $0\leq t_{1}<t_{2}<T$,
\begin{equation*}
\left[ \int_{B }u^{\lambda }\eta ^{p}\right] _{t_{1}}^{t_{2}}\leq \int_{Q}\left[ p\eta ^{p-1}\partial _{t}\eta
+c_{2}\left\vert \nabla \eta \right\vert ^{p}\right]u^{\lambda },
\end{equation*} where $Q=B\times [t_{1}, t_{2}]$.
Noticing that $$|\nabla\eta |^{p}=\left|e^{\frac{\xi}{p}}\nabla \varphi+p^{-1}e^{\frac{\xi}{p}}\varphi\nabla \xi\right|^{p}\leq 2^{p-1}\left(e^{\xi}|\nabla \varphi|^{p}+p^{-p}e^{\xi}\varphi^{p}|\nabla \xi|^{p}\right)$$ and $$p\eta ^{p-1}\partial _{t}\eta=\varphi^{p}e^{\xi}\partial_{t}\xi,$$ we obtain
\begin{align}\left[ \int_{B }{u^{\lambda }e^{\xi}\varphi^{p}}\right] _{t_{1}}^{t_{2}}\leq\int_{Q}\left[\varphi^{p}e^{\xi}\partial_{t}\xi+c_{2}2^{p-1}e^{\xi}|\nabla\varphi|^{p}+c_{2}2^{p-1}p^{-p}e^{\xi}\varphi^{p}|\nabla \xi|^{p}\right]u^{\lambda}.\label{beforecond}\end{align}
Hence, it follows from (\ref{condfordec}), that $$\left[ \int_{B }{u^{\lambda }e^{\xi}\varphi^{p}}\right] _{t_{1}}^{t_{2}}\leq c_{2}2^{p-1} \int_{Q}u^{\lambda}e^{\xi}|\nabla\varphi |^{p}.$$ Finally, we get, by sending $B\to M$ and using that $\varphi\to 1$ and $|\nabla \varphi|\to 0$ as $B\to M$, $$\left[ \int_{M }{u^{\lambda }e^{\xi}}\right] _{t_{1}}^{t_{2}}\leq 0,$$ which finishes the proof.
\end{proof}

\subsection{Davies-Gaffney type inequality}
\label{daviesty}

For any subset $A$ of $M$ and any $r>0$, set $$A_{r}=\{x\in M :d(x, A)<r\}.$$ In the next lemma, we will also use $$A_{r}^{c}:=(A_{r})^{c}=\{x\in M:d(x, A)\geq r\}.$$

\begin{lemma}\label{choicofxi}
Let $u$ be a non-negative bounded subsolution of (\ref{dtv}) in $M\times [0, \infty)$ and $A\subset M$ be measurable. Suppose that again $\lambda\geq \max\left(p, \frac{p}{p-1}\right)$. Then, for all $r, t>0$, \begin{equation}\label{upperintarc} \int_{A_{r}^{c}}u^{\lambda}(\cdot, t)\leq \int_{A^{c}}u_{0}^{\lambda}+\exp\left(-\zeta\left(\frac{r}{t^{1/p}}\right)^{\frac{p}{p-1}}\right)\int_{A}u_{0}^{\lambda},\end{equation} where $u_{0}=u(\cdot, 0)$ and \begin{equation}\label{defkappa}\zeta=\frac{p-1}{2c_{2}^{\frac{1}{p-1}}}.\end{equation}
\end{lemma}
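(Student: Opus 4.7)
The plan is to apply the integral maximum principle of Lemma~\ref{integdec} with a carefully designed non-positive function $\xi(x, s)$ having the feature that, for each fixed parameter $T > t$, $\xi$ vanishes identically on $A_r^c$ (so that a lower bound on $J(s) := \int_M u^\lambda(\cdot, s)\, e^{\xi(\cdot, s)}$ at time $s = t$ directly controls $\int_{A_r^c} u^\lambda(\cdot, t)$), while $\xi(\cdot, 0)$ is very negative on $A$, delivering the sub-Gaussian factor. Writing $\psi(x) := d(x, A)$, the specific choice is
\[
\xi(x, s) = -\left(r - \psi(x)\right)_+^{p/(p-1)} f(s), \qquad f(s) := \left((p-1) K (T-s)\right)^{-1/(p-1)}, \qquad K := \frac{c_2 \, 2^{p-1}}{(p-1)^p}.
\]
Here $f$ is the maximal positive solution on $[0, T)$ of the ODE $f' = K f^p$ (blowing up at $s = T$), and this is exactly what makes the differential inequality \eqref{condfordec} tight.

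First I would verify the three hypotheses of Lemma~\ref{integdec}. Non-positivity of $\xi$ is immediate; local Lipschitz regularity on $M \times [0, T)$ follows because $\psi$ is $1$-Lipschitz, the scalar map $\sigma \mapsto \sigma_+^{p/(p-1)}$ is $C^1$ (here the exponent $p/(p-1) > 1$ is essential), and $f$ is smooth on $[0, T)$. Using $|\nabla \psi| \leq 1$ almost everywhere I would estimate
\[
|\nabla \xi|^p \leq \left(\frac{p}{p-1}\right)^p (r - \psi)_+^{p/(p-1)} f(s)^p \quad \text{a.e.,}
\]
so that \eqref{condfordec} reduces on $\{\psi < r\}$ to $f'(s) \geq K f(s)^p$, which holds with equality by construction; on $\{\psi > r\}$ both $\partial_s \xi$ and $\nabla \xi$ vanish, and at the free boundary $\{\psi = r\}$ the gradient vanishes continuously, again thanks to $p/(p-1) > 1$.

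Granted these verifications, Lemma~\ref{integdec} yields $J(t) \leq J(0)$. I would bound $J(t)$ from below by restricting the integral to $A_r^c$, where $(r - \psi)_+ = 0$ forces $\xi(\cdot, t) = 0$, giving $J(t) \geq \int_{A_r^c} u^\lambda(\cdot, t)$. I would bound $J(0)$ from above by splitting into $A$ and $A^c$: on $A$ one has $(r - \psi)_+ = r$, producing the explicit factor $e^{-r^{p/(p-1)} f(0)}$, while on $A^c$ one has $(r - \psi)_+^{p/(p-1)} f(0) \geq 0$, so $e^{\xi(\cdot, 0)} \leq 1$. Combining these bounds gives
\[
\int_{A_r^c} u^\lambda(\cdot, t) \leq e^{-r^{p/(p-1)} f(0)} \int_A u_0^\lambda + \int_{A^c} u_0^\lambda,
\]
and a direct computation using $(p-1) K = c_2 2^{p-1}/(p-1)^{p-1}$ yields $r^{p/(p-1)} f(0) = \zeta \,(r/T^{1/p})^{p/(p-1)}$ with $\zeta$ as in \eqref{defkappa}. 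Letting $T \downarrow t$ in the right-hand side then delivers \eqref{upperintarc}.

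The main obstacle I anticipate is not the routine verification but identifying the correct nonlinear profile. The naive ansatz $\xi = -\alpha(r-\psi)_+ - \beta t$ with $\beta = c_2 2^{p-1} p^{-p} \alpha^p$ does satisfy \eqref{condfordec}, but after optimization in $\alpha$ it produces an unwanted prefactor $e^{\beta t} = \exp\left(\tfrac{\zeta}{p-1}(r/t^{1/p})^{p/(p-1)}\right)$ in front of $\int_{A^c} u_0^\lambda$, which blows up as $r/t^{1/p} \to \infty$ and is incompatible with the clean constant $1$ in \eqref{upperintarc}. Passing to the self-similar exponent $p/(p-1)$ (matching the scaling of the Barenblatt profile \eqref{barenborder}) simultaneously makes $|\nabla \xi|$ vanish at the free boundary $\{\psi = r\}$ and lets $\xi$ be identically zero on $A_r^c$ for all times, which is what removes the unwanted prefactor.
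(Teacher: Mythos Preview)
Your proof is correct and follows essentially the same route as the paper. The paper also applies Lemma~\ref{integdec} with a test function of the form $\xi(x,\tau)=-\zeta\, g(x)^{p/(p-1)}(s-\tau)^{-1/(p-1)}$ and then lets $s\downarrow t$; the only cosmetic difference is that the paper takes $g(x)=d(x,A_r^c)$ whereas you take $g(x)=(r-d(x,A))_+$, and your time factor $f(s)=\bigl((p-1)K(T-s)\bigr)^{-1/(p-1)}$ unwinds exactly to $\zeta/(T-s)^{1/(p-1)}$.
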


\begin{proof}
Fix some $s>t$ and define, for all $x\in M$ and $0\leq \tau< s$, the function $$\xi(x,\tau)=-\zeta\left(\frac{d(x, A_{r}^{c})}{(s-\tau)^{1/p}}\right)^{\frac{p}{p-1}}.$$
For such $\tau$, let us also define $$J(\tau)=\int_{M}u^{\lambda}(\cdot, \tau)e^{\xi(\cdot, \tau)} .$$
Note that $\xi$ is non-positive and locally Lipschitz in $M\times[0, s)$.
Using that $|\nabla d(x, A_{r}^{c})|\leq 1$, we get $$\left|\nabla \xi(x, t)\right|^{p}\leq \frac{\zeta^{p}p^{p}d(x, A_{r}^{c})^{\frac{p}{p-1}}}{(p-1)^{p}(s-\tau)^{\frac{p}{p-1}}}.$$
Since $$\partial_{\tau}\xi=-\frac{\zeta d(x, A_{r}^{c})^{\frac{p}{p-1}}}{(p-1)(s-\tau)^{\frac{p}{p-1}}},$$ the partial derivative $\partial_{\tau}\xi$ satisfies the condition $$\partial_{\tau}\xi+c_{2}2^{p-1}p^{-p}|\nabla \xi|^{p}\leq 0.$$ Hence, we obtain by Lemma \ref{integdec} that \begin{equation}\label{Jnondeczero}J(t)\leq J(0).\end{equation}
As $d(x, A_{r}^{c})\geq r$ for all $x\in A$, we have $$\xi(x, 0)\leq -\zeta\left(\frac{r}{s^{1/p}}\right)^{\frac{p}{p-1}}~ \textnormal{for all}~x\in A.$$
Together with the fact that $\xi(x, 0)\leq0$ for all $x\in M$, we therefore obtain that \begin{equation}\label{upperJzero}J(0)=\int_{A^{c}}u_{0}^{\lambda}e^{\xi(\cdot, 0)}+\int_{A}u_{0}^{\lambda}e^{\xi(\cdot, 0)}\leq \int_{A^{c}}u_{0}^{\lambda}+\exp\left(-\zeta\left(\frac{r}{s^{1/p}}\right)^{\frac{p}{p-1}}\right)\int_{A}u_{0}^{\lambda}.\end{equation}

Since $\xi(x, t)=0$ for $x\in A_{r}^{c}$, it follows that $$J(t)\geq\int_{A_{r}^{c}}u^{\lambda}(\cdot, t)e^{\xi(\cdot, t)}=\int_{A_{r}^{c}}u^{\lambda}(\cdot, t).$$
Hence, combining this with (\ref{Jnondeczero}) and (\ref{upperJzero}), we see that $$\int_{A_{r}^{c}}u^{\lambda}(\cdot, t)\leq \int_{A^{c}}u_{0}^{\lambda}+\exp\left(-\zeta\left(\frac{r}{s^{1/p}}\right)^{\frac{p}{p-1}}\right)\int_{A}u_{0}^{\lambda}.$$
Sending now $s\to t+$, we finally obtain (\ref{upperintarc}).
\end{proof}

\begin{definition}
We call a function $\gamma:[0, \infty)\to (0, \infty)$ \textit{regular} if $\gamma$ is increasing and of \textit{at most polynomial growth}. Here, the latter means that there exist $\theta>1$ and $\Theta>0$ so that for all $t>0$, \begin{equation}\label{mopolgr}f(\theta t)\leq \Theta f(t).\end{equation}
\end{definition}



The next lemma is inspired by a result in the linear case from \cite{grigor1997gaussian}.

\begin{lemma}\label{upperArcIntLamb}
Let $u$ be a non-negative bounded subsolution of (\ref{dtv}) in $M\times [0, \infty)$ and let $\lambda\geq \max\left(p, \frac{p}{p-1}\right)$. Fix some $\rho>0$ and let $\gamma$ be a regular function in $[0, \infty)$. Assume that for all $t>0$, \begin{equation}\label{condfordecay}\int_{A_{\rho}}u^{\lambda}(\cdot, t)\leq \frac{1}{\gamma(t)},\end{equation} where $A=\textnormal{supp}~u_{0}$.
Then, for all $t^{1/p}\lesssim\rho$, \begin{equation}\label{decayforArc}\int_{A_{\rho}^{c}}u^{\lambda}(\cdot, t)\leq\frac{C}{\gamma(t)}\exp\left(-\varepsilon\left(\frac{\rho}{t^{1/p}}\right)^{\frac{p}{p-1}}\right),\end{equation} where $\varepsilon=\varepsilon(\theta, p, \lambda)>0$ and $C=C(\Theta, \theta, p, \lambda)$.
\end{lemma}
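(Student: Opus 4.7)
The idea is to apply Lemma \ref{choicofxi} iteratively in a time-shifted form, using geometrically shrinking spatial and temporal scales $\rho_k=\rho\,2^{-k}$ and $t_k=t\,\eta^{kp}$ for some $\eta\in(0,1/2)$. The crucial feature of this choice is that the effective ratio $(\rho_k-\rho_{k+1})/(t_k-t_{k+1})^{1/p}$ grows by a factor $1/(2\eta)>1$ at each step, so the exponential decay provided by the Davies--Gaffney inequality improves geometrically in $k$, which is what will eventually absorb the polynomial growth of $1/\gamma$.

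First I would note that by time-translation invariance of (\ref{dtv}), Lemma \ref{choicofxi} applied to the shifted subsolution $\tilde u(x,\tau):=u(x,s+\tau)$ with any measurable $B\subset M$ yields, for $0\le s<t$ and $r>0$,
\[
\int_{B_r^c} u^\lambda(\cdot,t)\le \int_{B^c} u^\lambda(\cdot,s)+\exp\!\left(-\zeta\left(\frac{r}{(t-s)^{1/p}}\right)^{p/(p-1)}\right)\int_B u^\lambda(\cdot,s).
\]
Applying this from $t_{k+1}$ to $t_k$ with $B=A_{\rho_{k+1}}$ and $r=\rho_k-\rho_{k+1}=\rho/2^{k+1}$, and using $(A_{\rho_{k+1}})_r=A_{\rho_k}$ together with $A_{\rho_{k+1}}\subset A_\rho$ (so that (\ref{condfordecay}) at time $t_{k+1}$ gives $\int_{A_{\rho_{k+1}}}u^\lambda(\cdot,t_{k+1})\le 1/\gamma(t_{k+1})$), I obtain the recursion
\[
\int_{A_{\rho_k}^c}u^\lambda(\cdot,t_k)\le \int_{A_{\rho_{k+1}}^c}u^\lambda(\cdot,t_{k+1})+\frac{\exp(-\zeta_0 \beta^k D)}{\gamma(t_{k+1})},
\]
where $D:=(\rho/t^{1/p})^{p/(p-1)}$, $\beta:=(1/(2\eta))^{p/(p-1)}>1$, and $\zeta_0>0$ depends only on $p,\eta,\lambda$.

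Telescoping for $k=0,\dots,K-1$ and sending $K\to\infty$, the remainder $\int_{A_{\rho_K}^c}u^\lambda(\cdot,t_K)$ vanishes because $u_0\equiv 0$ on $A^c\supset A_{\rho_K}^c$ and $u(\cdot,t)\to u_0$ in $L^1(M)$ as $t\to 0^+$, which follows from $u\in C([0,\infty);L^1_{loc})$, boundedness of $u$, and the monotonicity of $\|u(\cdot,t)\|_{L^1(M)}$ from Lemma \ref{monl1} combined with Scheff\'e's theorem. Hence
\[
\int_{A_\rho^c}u^\lambda(\cdot,t)\le \sum_{k=0}^\infty\frac{\exp(-\zeta_0\beta^k D)}{\gamma(t_{k+1})}.
\]

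Finally, the polynomial growth of $\gamma$ gives $1/\gamma(t_{k+1})\le \Theta\cdot C^{k+1}/\gamma(t)$ with $C:=\Theta^{p\log(1/\eta)/\log\theta}>1$. Bernoulli's inequality $\beta^k\ge 1+k(\beta-1)$ then yields
\[
\sum_{k=0}^\infty C^k e^{-\zeta_0\beta^k D}\le e^{-\zeta_0 D}\sum_{k=0}^\infty\left(Ce^{-\zeta_0(\beta-1)D}\right)^k,
\]
a geometric series bounded by a constant multiple of $e^{-\zeta_0 D}$ provided $Ce^{-\zeta_0(\beta-1)D}\le 1/2$, equivalently $D\ge D_0(\Theta,\theta,p,\lambda)$. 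This threshold on $D$ is precisely the condition $t^{1/p}\lesssim\rho$ in the lemma statement, and it gives the claimed estimate with $\varepsilon=\zeta_0$. The main obstacle is finding a scaling that makes this summation converge while preserving the exponential factor: a linear or slowly-shrinking iteration produces a constant decay rate per step, which cannot absorb the geometrically growing $C^k$ coming from the polynomial growth of $\gamma$; choosing $\eta<1/2$ is what forces $\beta>1$ and produces the super-geometric decay $e^{-\zeta_0\beta^k D}$ needed for the series to converge to the right form.
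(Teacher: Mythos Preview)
Your argument is correct and follows the same strategy as the paper: iterate the time-shifted Davies--Gaffney inequality of Lemma \ref{choicofxi} over nested sets $A_{\rho_k}$ at times $t_k\downarrow 0$, telescope, kill the remainder using $u_0|_{A^c}=0$ and continuity in $L^1$, and then sum the resulting series. The only difference is cosmetic: the paper takes $\rho_k=\rho/(k+1)$ and $t_k=t/\theta^k$ (tying the time scale directly to the regularity parameter $\theta$, so that $1/\gamma(t_k)\le \Theta^k/\gamma(t)$ without an extra conversion), whereas you take both sequences geometric with an auxiliary parameter $\eta<1/2$; either choice produces an exponent in the Gaussian term growing fast enough in $k$ to beat the polynomial factor $\Theta^k$ (or your $C^k$) once $\rho/t^{1/p}$ exceeds a threshold.
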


\begin{proof}
For any $\rho\geq\rho'>0$ and $t>0$, let us set $$J_{\rho'}(t)=\int_{A_{\rho'}^{c}}u^{\lambda}(\cdot, t).$$ Then it follows from (\ref{upperintarc}) that for all $0<\rho'<\rho$ and $0<t^{\prime}<t$, $$J_{\rho}(t)\leq J_{\rho'}(t^{\prime})+\exp\left(-\zeta\left(\frac{\rho-\rho'}{(t-t^{\prime})^{1/p}}\right)^{\frac{p}{p-1}}\right)\int_{A_{\rho'}}u^{\lambda}(\cdot, t^{\prime}).$$ Together with our assumption (\ref{condfordecay}), this yields \begin{equation}\label{JRandJr}J_{\rho}(t)\leq J_{\rho'}(t^{\prime})+\frac{1}{\gamma(t^{\prime})}\exp\left(-\zeta\left(\frac{\rho-\rho'}{(t-t^{\prime})^{1/p}}\right)^{\frac{p}{p-1}}\right).\end{equation}

Let $\{\rho_{k}\}_{k=0}^{\infty}$ and $\{t_{k}\}_{k=0}^{\infty}$ be strictly decreasing sequences of reals such that $$\rho_{0}=\rho,~ \rho_{k}\downarrow 0 \quad \textnormal{and}\quad t_{0}=t,~ t_{k}\downarrow 0.$$
Then (\ref{JRandJr}) implies that for any $k\geq1$, 
\begin{equation}\label{Jrk-1Jrk}J_{\rho_{k-1}}(t_{k-1})\leq J_{\rho_{k}}(t_{k})+\frac{1}{\gamma(t_{k})}\exp\left(-\zeta\left(\frac{\rho_{k-1}-\rho_{k}}{(t_{k-1}-t_{k})^{1/p}}\right)^{\frac{p}{p-1}}\right).\end{equation}
For $k\to \infty$, we have $$J_{\rho_{k}}(t_{k})=\int_{A_{\rho_{k}}^{c}}u^{\lambda}(\cdot, t_{k})\leq \int_{A^{c}}u^{\lambda}(\cdot, t_{k})\to \int_{A^{c}}u_{0}^{\lambda}=0,$$ where we used that $u\in C([0, \infty), L^{1}(M))$ and $u_{0}=0$ in $A$.

Hence, adding up the inequalities (\ref{Jrk-1Jrk}) for all $k\geq 1$, we deduce \begin{equation}\label{IterationJr}J_{\rho}(t)\leq \sum_{k=1}^{\infty}{\frac{1}{\gamma(t_{k})}\exp\left(-\zeta\left(\frac{\rho_{k-1}-\rho_{k}}{(t_{k-1}-t_{k})^{1/p}}\right)^{\frac{p}{p-1}}\right)}.\end{equation} Let us now specify the sequences $\{\rho_{k}\}_{k=0}^{\infty}$ and $\{t_{k}\}_{k=0}^{\infty}$ as follows: $$\rho_{k}=\frac{\rho}{k+1}\quad \textnormal{and}\quad t_{k}=\frac{t}{\theta^{k}},$$ where $\theta$ is as in (\ref{mopolgr}). In particular, we have $$\rho_{k-1}-\rho_{k}=\frac{\rho}{k(k+1)}\quad \textnormal{and}\quad t_{k-1}-t_{k}=\frac{(\theta-1)t}{\theta^{k}},$$ so that $$\zeta\left(\frac{\rho_{k-1}-\rho_{k}}{(t_{k-1}-t_{k})^{1/p}}\right)^{\frac{p}{p-1}}=\zeta \frac{\theta^{\frac{k}{p-1}}}{k^{\frac{p}{p-1}}(k+1)^{\frac{p}{p-1}}(\theta-1)^{\frac{1}{p-1}}}\left(\frac{\rho}{t^{1/p}}\right)^{\frac{p}{p-1}}\geq\varepsilon (k+1)\left(\frac{\rho}{t^{1/p}}\right)^{\frac{p}{p-1}},$$ where $$\varepsilon=\varepsilon(\theta, p, \lambda)=\inf_{k\geq 1}\zeta \frac{\theta^{\frac{k}{p-1}}}{k^{\frac{p}{p-1}}(k+1)^{\frac{p}{p-1}+1}(\theta-1)^{\frac{1}{p-1}}}>0.$$

By the regularity condition (\ref{mopolgr}) of $\gamma$, we see that $$\frac{1}{\gamma(t_{k})}\leq \frac{\Theta}{\gamma(t)}.$$
Substituting this into (\ref{IterationJr}), we obtain \begin{align*} J_{\rho}(t)&\leq\frac{\Theta}{\gamma(t)}\sum_{k=1}^{\infty}\exp\left(-\varepsilon (k+1)\left(\frac{\rho}{t^{1/p}}\right)^{\frac{p}{p-1}}\right)\\&=\frac{\Theta\exp\left(-\varepsilon\left(\frac{\rho}{t^{1/p}}\right)^{\frac{p}{p-1}}\right)}{\gamma(t)}\sum_{k=1}^{\infty}\exp\left(-\varepsilon k\left(\frac{\rho}{t^{1/p}}\right)^{\frac{p}{p-1}}\right).
\end{align*}
For $t^{1/p}\lesssim\rho$, we finally conclude \begin{align*}J_{\rho}(t) \leq \frac{C\exp\left(-\varepsilon\left(\frac{\rho}{t^{1/p}}\right)^{\frac{p}{p-1}}\right)}{\gamma(t)},\end{align*}
where $C=C(\Theta, \theta, p, \lambda)$. This implies (\ref{decayforArc}) and finishes the proof.
\end{proof}

\begin{corollary}\label{CorD}
Let $B=B(x, R)$ be a geodesic ball in $M$ and $A=\textnormal{supp}~u_{0}$. Set
\begin{equation}\label{defofrho}
\rho=c\max\left(d(x, A), CR\right),
\end{equation} where $c, C$ are positive constants chosen below and let $\gamma$ be a regular function in $[0, \infty)$. Fix some $\lambda\geq \max\left(p, \frac{p}{p-1}\right)$. Assume that for all $t>0$, \begin{equation}\label{condfordecayco}\int_{A_{\rho}}u^{\lambda}(\cdot, t)\leq \frac{1}{\gamma(t)}.\end{equation} Then, for all $t\simeq R^{p}$, \begin{equation}\label{dgtype}\int_{B}u^{\lambda}(\cdot, t)\leq \frac{C^{\prime}}{\gamma(t)}\exp\left(-c^{\prime}\left(\frac{d(x, A)}{t^{1/p}}\right)^{\frac{p}{p-1}}\right),\end{equation} where $C^{\prime}=C^{\prime}(\Theta, \theta, p, \lambda)>0$ and $c^{\prime}=c^{\prime}(\theta, p, \lambda)>0$.
\end{corollary}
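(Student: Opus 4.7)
The plan is to split the integral of $u^{\lambda}$ over $B$ into its restriction to the ``near'' set $A_{\rho}$ and the ``far'' set $A_{\rho}^{c}$, namely
\[
\int_{B} u^{\lambda}(\cdot,t) \leq \int_{B\cap A_{\rho}} u^{\lambda}(\cdot,t) + \int_{A_{\rho}^{c}} u^{\lambda}(\cdot,t),
\]
and then to choose the constants $c,C$ in (\ref{defofrho}) so that each piece separately enjoys the desired sub-Gaussian bound. The far part will come from Lemma \ref{upperArcIntLamb} directly, and the near part will be handled by a case distinction according to whether $d(x,A)$ is small or large compared with $R$.

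For the far part, the hypotheses $t\simeq R^{p}$ and $\rho\geq cCR$ immediately give $t^{1/p}\lesssim \rho$, so Lemma \ref{upperArcIntLamb} is applicable and yields
\[
\int_{A_{\rho}^{c}} u^{\lambda}(\cdot,t) \leq \frac{C_{1}}{\gamma(t)}\exp\left(-\varepsilon\left(\frac{\rho}{t^{1/p}}\right)^{\frac{p}{p-1}}\right).
\]
Since $\rho\geq c\, d(x,A)$ by construction, replacing $\rho$ by $c\, d(x,A)$ in the exponent only absorbs the factor $c^{p/(p-1)}$ into the constant $\varepsilon$, and we end up with the required decay $\exp\bigl(-c'(d(x,A)/t^{1/p})^{p/(p-1)}\bigr)$.

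For the near part $\int_{B\cap A_{\rho}} u^{\lambda}(\cdot,t)$, I would first pick $c\in(0,1)$, say $c=1/2$, and then $C\geq 1/(1-c)$, say $C=2$. Then two cases arise. If $d(x,A)\leq CR$, then $t^{1/p}\simeq R$ forces $d(x,A)/t^{1/p}$ to be bounded by a constant, so the target exponential $\exp\bigl(-c'(d(x,A)/t^{1/p})^{p/(p-1)}\bigr)$ is bounded below by a positive constant, and the crude estimate $\int_{B\cap A_{\rho}} u^{\lambda}\leq \int_{A_{\rho}} u^{\lambda}\leq 1/\gamma(t)$ given by (\ref{condfordecayco}) already suffices. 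If instead $d(x,A)>CR$, then for every $y\in B$ we have $d(y,A)\geq d(x,A)-R\geq (1-1/C)d(x,A)\geq c\, d(x,A)=\rho$, so $B\subset A_{\rho}^{c}$ and the near contribution simply vanishes.

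Combining the two pieces gives (\ref{dgtype}). I do not foresee any genuine obstacle beyond bookkeeping: the only substantive analytic input is Lemma \ref{upperArcIntLamb}, and the rest is choosing $c,C$ so that the geometric inclusion $B\subset A_{\rho}^{c}$ holds precisely in the regime where the near-part estimate would otherwise be too weak to absorb the required exponential factor.
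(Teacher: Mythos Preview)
Your proof is correct and takes essentially the same route as the paper: split into the cases $d(x,A)\leq CR$ and $d(x,A)>CR$, using hypothesis (\ref{condfordecayco}) directly in the former (where the exponential factor is bounded below) and arranging $B\subset A_\rho^c$ via a suitable choice of $c,C$ so that Lemma \ref{upperArcIntLamb} applies in the latter. Your preliminary near/far decomposition is just a convenient re-packaging of the same dichotomy; if anything, your treatment of the first case is slightly cleaner since it does not rely on the inclusion $B\subset A_\rho$.
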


\begin{proof}
Consider first the case when $d(x, A)\leq CR$. Then $\rho=cCR$ and $B\subset A_{\rho}$ if we choose $C$ large enough. Hence, (\ref{dgtype}) follows from (\ref{condfordecayco}) and the monotonicity of $\gamma$. In the case when $d(x, A)>CR$, we have $\rho=cd(x, A)$. If $\rho\leq d(A, B)$, that is \begin{equation}\label{condonc}cd(x, A)\leq d(x, A)-R,\end{equation} we get $B\subset A_{\rho}^{c}$. Indeed, we can choose $c$ small enough so that (\ref{condonc}) is satisfied, whence (\ref{decayforArc}) yields $$\int_{B}u^{\lambda}(\cdot, t)\leq \frac{C^{\prime}}{\gamma(t)}\exp\left(-\varepsilon\left(\frac{\rho}{t^{1/p}}\right)^{\frac{p}{p-1}}\right),$$ which proves (\ref{dgtype}) also in this case.
\end{proof}
\FRAME{dtbpFU}{2.5607in}{1.6588in}{0pt}{\Qcb{Sets $A$ and $B$}}{\Qlb{pic1m}}{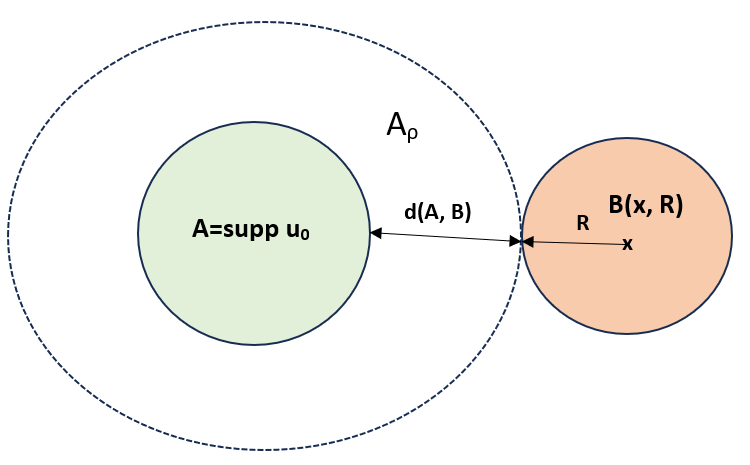}{\special{language"Scientific Word";type "GRAPHIC";maintain-aspect-ratio TRUE;display "USEDEF";valid_file "F";width 2.5607in;height 1.6588in;depth 0pt;original-width 11.078in;original-height 7.1615in;cropleft "0";croptop "1";cropright "1";cropbottom "0";filename 'pic1.png';file-properties "XNPEU";}}

\section{Mean value inequality}
\label{secmvi}

\subsection{Sobolev and Faber-Krahn inequalities}
\label{secMoser}

Let $M$ be a Riemannian manifold of dimension $n$.
Recall that on geodesically complete Riemannian manifolds, every geodesic ball $B$ is precompact. Consequently, the following \textit{Sobolev inequality} in $B$ of order $%
p\geq 1$ holds: for any non-negative function 
$w\in W_{0}^{1,p}(B)$,%
\begin{equation}
	\left( \int_{B}w^{p\kappa }\right) ^{1/\kappa }\leq S_{B}\int_{B}\left\vert
	\nabla w\right\vert ^{p},  \label{SBk}
\end{equation}%
where $\kappa >1$ is some constant and $S_{B}$ is called the \emph{Sobolev
	constant} in $B$. The value of $\kappa $ is independent of $B$ and can be
chosen as follows:%
\begin{equation}
	\kappa =\left\{ 
	\begin{array}{ll}
		\dfrac{n}{n-p}, & \text{if }n>p, \\ 
		\text{any number}>1, & \text{if }n\leq p.%
	\end{array}%
	\right.  \label{k}
\end{equation}%

We always assume that $S_{B}$ is chosen to be minimal possible. In this case
the function $B\mapsto S_{B}$
is clearly monotone increasing with respect to inclusion of balls.

We say that $M$ admits a \textit{uniform Sobolev inequality} if (\ref{SBk}) holds with $S_{B}\leq \func{const}$
for all geodesic balls $B\subset M$. This holds for example, in the case when $M$ is a Cartan-Hadamard manifold (see \cite{hoffman1974sobolev}).

Let $\kappa ^{\prime }=\frac{\kappa }{\kappa -1}$
be the H\"{o}lder conjugate of $\kappa $ and set
\begin{equation}\label{nu}
	\nu =\dfrac{1}{\kappa ^{\prime }}=\left\{ 
	\begin{array}{ll}
		\dfrac{p}{n}, & \text{if }n>p, \\ 
		\text{any number}\in (0, 1), & \text{if }n\leq p.%
	\end{array}%
	\right..
\end{equation}
Denoting by $r(B)$ the radius of $B$, let us define a new quantity%
\begin{equation}\label{defiota}
	\iota (B)=\dfrac{1}{\mu (B)}\left( \dfrac{r(B)^{p}}{S_{B}}\right) ^{\frac{1}{\nu}}
\end{equation}%
so that 
$S_{B}=\frac{r(B)^{p}}{\left( \iota (B)\mu \left( B\right) \right) ^{\nu}}.$
Then (\ref{SBk}) can be written in the form%
\begin{equation}
	\int_{B}\left\vert \nabla w\right\vert ^{p}\geq 
	\dfrac{(\iota (B)\mu(B))^{\nu}}{r(B)^{p}}\left(
	\int_{B}w^{p\kappa }\right) ^{1/\kappa }.  \label{Sp}
\end{equation}%
The constant $\iota (B)$ is called the \emph{normalized Sobolev} constant in 
$B$.
It is clear from (\ref{Sp}) that the value of $\kappa $ can be always
reduced (by modifying the value of $\iota (B)$). It is only important that $%
\kappa >1$. In fact, the exact value of $\kappa $ does not affect the
results, although various constants do depend on $\kappa $.

Letting $D=\left\{ w>0\right\}$ we deduce from (\ref{Sp}) and H\"{o}lder's inequality that
\begin{equation}
	\dint_{B}\left\vert \nabla w\right\vert ^{p}\geq \dfrac{1}{r(B)^{p}}\left( \iota (B)\dfrac{\mu (B)}{\mu (D)}\right) ^{\nu
	}\dint_{B}w^{p}.   \label{FKp}
\end{equation}
We call inequality (\ref{FKp}) \textit{Faber-Krahn inequality}
of order $p\geq 1$.
We say that $M$ satisfies a \textit{relative Faber-Krahn inequality} of order $p\geq 1$, if (\ref{FKp}) holds with $\iota(B)\geq \textnormal{const}>0$ for all geodesic balls $B\subset M$. For example, this holds if $M$ is complete and satisfies $Ricci_{M}\geq 0$ (see \cite{Buser, grigor, Saloff}).

\begin{lemma}[Moser inequality]\cite{grigor2023finite}
	\label{MoserLem}Let $w\in L^{p}\left( I;W_{0}^{1,p}(B)\right)$ be
	non-negative and $I$ be an interval in $\mathbb{R}_{+}$. Set $Q=B\times I$. 
	Then
	\begin{equation}
		\dint_{Q}w^{p\left( 1+\nu \right) }\leq S_{B}\left( \dint_{Q}\left\vert
		\nabla w\right\vert ^{p}\right) \sup\limits_{t \in I}\left( \dint_{B}w^{p}\right)
		^{\nu }.  \label{Moser}
	\end{equation}
\end{lemma}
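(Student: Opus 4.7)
The plan is to combine the Sobolev inequality in $B$ with a Hölder interpolation at each fixed time, then integrate in the time variable and pull out the sup.

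First, I would fix $t \in I$ and work with the spatial slice $w(\cdot,t) \in W_0^{1,p}(B)$. The key observation is that the exponent $p(1+\nu)$ appearing on the left-hand side is exactly the one that splits cleanly as $p + p\nu$, matching the conjugate pair $(\kappa,\kappa')$ from (\ref{nu}). Concretely, apply Hölder's inequality on $B$ with exponents $\kappa$ and $\kappa' = 1/\nu$ to the factorization $w^{p(1+\nu)} = w^{p\nu}\cdot w^{p}$:
\begin{equation*}
\int_B w^{p(1+\nu)} \;\leq\; \left(\int_B w^{p\nu\kappa'}\right)^{1/\kappa'} \left(\int_B w^{p\kappa}\right)^{1/\kappa}.
\end{equation*}
Since $\nu\kappa' = 1$ by the definition of $\nu$, the first factor is simply $\left(\int_B w^{p}\right)^{\nu}$. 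Now invoke the Sobolev inequality (\ref{SBk}) applied to the non-negative function $w(\cdot,t) \in W_0^{1,p}(B)$ to estimate the second factor:
\begin{equation*}
\left(\int_B w^{p\kappa}\right)^{1/\kappa} \;\leq\; S_B \int_B |\nabla w|^p.
\end{equation*}

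Combining these two bounds gives, at each fixed time $t$,
\begin{equation*}
\int_B w^{p(1+\nu)}(\cdot,t) \;\leq\; S_B \left(\int_B w^{p}(\cdot,t)\right)^{\nu} \int_B |\nabla w|^p(\cdot,t).
\end{equation*}
Integrating this inequality over $t \in I$ and bounding the factor $\left(\int_B w^p(\cdot,t)\right)^\nu$ by its supremum (which is valid because this factor is independent of the other integration), one obtains exactly (\ref{Moser}):
\begin{equation*}
\int_Q w^{p(1+\nu)} \;\leq\; S_B \sup_{t\in I}\left(\int_B w^p\right)^\nu \int_Q |\nabla w|^p.
\end{equation*}

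There is no real obstacle here: the proof is a one-line application of Hölder followed by the Sobolev inequality. The only point that requires care is verifying that the exponents match, i.e.\ that $\nu\kappa' = 1$ so that $w^{p\nu\kappa'} = w^p$, which is built into the definition (\ref{nu}). The role of the hypothesis $w \in L^p(I; W_0^{1,p}(B))$ is precisely to ensure that both $\int_Q |\nabla w|^p$ and the fixed-time $L^p$ norms are meaningful, so no measurability or approximation issues arise.
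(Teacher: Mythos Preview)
Your argument is correct and is the standard proof of this inequality: H\"older at each time slice with the conjugate pair $(\kappa,\kappa')$, followed by the Sobolev inequality (\ref{SBk}), then integration in $t$ with the sup pulled out. The paper does not give its own proof of this lemma but cites it from \cite{grigor2023finite}; your proof is exactly the one given there, so there is nothing to add.
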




\subsection{Comparison in two cylinders}

\begin{lemma}
\label{Lemtwo}Consider two balls $B=B\left( x,r\right) $ and $B^{\prime}=B\left( x,r^{\prime }\right) $ with $0<r^{\prime }<r$, and two cylinders%
\begin{equation*}
Q=B\times \lbrack t,T],\ \ \ Q^{\prime }=B^{\prime }\times \left[ t^{\prime },T\right],\end{equation*} where $0\leq t<t^{\prime }<T$.
Let $\lambda $ be any real such that $\lambda \geq \max\left(p, \frac{p}{p-1}\right)$.
Let $v$ be a non-negative bounded subsolution of \emph{(\ref{dtv})} in $%
B\times \lbrack t,T]$.
Then 
\begin{equation}
\int_{Q^{\prime }}v^{\lambda \left( 1+\nu \right) }\leq \frac{CS_{B}\max\left(\lambda^{p-2}, \lambda^{\left( 2-p\right) \nu }\right)}{\min\left(t^{\prime }-t, (r-r^{\prime })^{p}\right) ^{ 1+\nu }} \left( \int_{Q}v^{\lambda
}\right) ^{1+\nu },  \label{vQ'QQ}
\end{equation}%
where constant $C$ depends on $p$ and $\nu $ (given by (\ref{nu})), but it is
independent of $\lambda $.
\end{lemma}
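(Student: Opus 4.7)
The plan is to combine the Caccioppoli-type inequality (Lemma \ref{Caccioppoli}) with the Moser inequality (Lemma \ref{MoserLem}) applied to the function $w = v^{\alpha}\eta$, using a carefully chosen space-time cutoff. The final quantitative dependence on $\lambda$ will then be read off from Remarks \ref{Rempc1}--\ref{Remc2}.

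First I would choose a product cutoff $\eta(x,\tau) = \psi(x)\chi(\tau)$ as follows: $\psi$ is Lipschitz with $\psi \equiv 1$ on $B'$, $\mathrm{supp}\,\psi \subset B$, and $|\nabla \psi| \leq 2/(r-r')$; while $\chi$ is Lipschitz with $\chi(t) = 0$, $\chi \equiv 1$ on $[t',T]$, and $0 \leq \chi' \leq 2/(t'-t)$. Then $\eta \equiv 1$ on $Q'$, $\eta(\cdot,t) \equiv 0$, and
\[
|\partial_\tau \eta| \leq \frac{C}{t'-t},\qquad |\nabla \eta|^p \leq \frac{C}{(r-r')^p}.
\]
Applying (\ref{veta1}) in $B\times[t,t_2]$ for any $t_2 \in [t',T]$ and using that the boundary term at $t_1 = t$ vanishes, I obtain
\[
\int_B v^\lambda\eta^p(\cdot,t_2) + c_1 \int_{B\times[t,t_2]} |\nabla(v^\alpha\eta)|^p \leq \int_Q \bigl[p\eta^{p-1}\partial_\tau\eta + c_2|\nabla\eta|^p\bigr] v^\lambda \leq \frac{C(p + c_2)}{m} \int_Q v^\lambda,
\]
where $m := \min(t'-t, (r-r')^p)$. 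Taking the supremum over $t_2 \in [t',T]$ of the first term on the left, and setting $t_2 = T$ to bound the gradient term, gives two inequalities which combine into
\[
\sup_{\tau \in [t',T]} \int_B v^\lambda\eta^p(\cdot,\tau) + c_1 \int_Q |\nabla(v^\alpha\eta)|^p \leq \frac{C(p+c_2)}{m} \int_Q v^\lambda.
\]

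Next, I apply the Moser inequality (\ref{Moser}) to $w = v^\alpha \eta$ on $Q$. Since $\alpha p = \lambda$ and $\eta \equiv 1$ on $Q'$,
\[
\int_{Q'} v^{\lambda(1+\nu)} \leq \int_Q w^{p(1+\nu)} \leq S_B\left(\int_Q|\nabla w|^p\right)\sup_{\tau}\left(\int_B w^p\right)^\nu.
\]
Plugging in the two bounds from the previous step yields
\[
\int_{Q'} v^{\lambda(1+\nu)} \leq \frac{C\,S_B}{c_1}\left(\frac{p+c_2}{m}\right)^{1+\nu}\left(\int_Q v^\lambda\right)^{1+\nu} = \frac{C\,S_B\,(p+c_2)^{1+\nu}}{c_1\,m^{1+\nu}}\left(\int_Q v^\lambda\right)^{1+\nu}.
\]

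The main technical point, and where care is needed, is reducing the prefactor $(p+c_2)^{1+\nu}/c_1$ to the asserted form $\max(\lambda^{p-2},\lambda^{(2-p)\nu})$ in a way that is uniform in $p$. From (\ref{c1}) one sees $c_1 \asymp \lambda^{2-p}$, hence $1/c_1 \leq C_p\,\lambda^{p-2}$; from (\ref{c2<}), $c_2 \leq C_p\,\lambda^{2-p}$, so $p + c_2 \leq C_p\max(1,\lambda^{2-p})$. A short case split $p \geq 2$ versus $1 < p < 2$ then gives
\[
\frac{(p+c_2)^{1+\nu}}{c_1} \leq C_p\,\lambda^{p-2}\,\max\bigl(1,\lambda^{(2-p)(1+\nu)}\bigr) \leq C_p\,\max\bigl(\lambda^{p-2},\,\lambda^{(2-p)\nu}\bigr),
\]
which combined with the previous display yields (\ref{vQ'QQ}). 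The only subtle spot is the second inequality above: for $p \geq 2$ the maximum is $\lambda^{p-2}$ since $(2-p)(1+\nu) \leq 0$, while for $p<2$ the exponent of $\lambda$ collapses to $(p-2) + (2-p)(1+\nu) = (2-p)\nu$; everything else is a direct substitution.
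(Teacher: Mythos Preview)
Your proof is correct and follows essentially the same route as the paper's: the same product cutoff $\eta=\psi\chi$, the Caccioppoli inequality (\ref{veta1}) to control both $\sup_\tau\int_B v^\lambda\eta^p$ and $\int_Q|\nabla(v^\alpha\eta)|^p$, and then the Moser inequality (\ref{Moser}) applied to $w=v^\alpha\eta$. The only cosmetic difference is in the bookkeeping of the prefactor: the paper keeps the two Caccioppoli applications separate and arrives at $\max(p/c_1,c_2/c_1)\cdot\max(p,c_2)^\nu$, whereas you combine them into $(p+c_2)^{1+\nu}/c_1$; both reduce to $\max(\lambda^{p-2},\lambda^{(2-p)\nu})$ after the same case split $p\gtrless 2$. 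One small remark: when you apply Moser on $Q=B\times[t,T]$ you need $\sup_{\tau\in[t,T]}\int_B w^p$, not just $\sup_{\tau\in[t',T]}$; this is harmless since your Caccioppoli estimate on $B\times[t,t_2]$ in fact holds for every $t_2\in(t,T]$ (the boundary term at $t$ vanishes regardless), so the supremum over the full interval is bounded by the same right-hand side.
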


\begin{proof}
Let us consider in $Q$ the function $\eta(x, \tau)=\eta_{1}(x)\eta_{2}(\tau)$, where $\eta_{1}$ is a bump function of $B^{\prime}$ in $B$ and \begin{equation*}
\eta_{2}(\tau)=\left\{ 
\begin{array}{ll}
\frac{\tau-t}{t^{\prime}-t}, & t\leq \tau<t^{\prime}, \\ 
1, & t^{\prime}\leq \tau\leq T.
\end{array}%
\right.
\end{equation*} Set again $\alpha =\frac{\lambda }{p}$ and recall that then by (\ref{valpha}), $v^{\alpha }\eta\in L^{p}\left([t,T]; W_{0}^{1, p}(B)\right)$. Hence, applying the Moser inequality (\ref{Moser}) with $w=v^{\alpha }\eta$
and using $w^{p}=v^{\lambda }\eta ^{p},$
we obtain
\begin{equation*}
\int_{Q}v^{\lambda \left( 1+\nu \right) }\eta ^{p\left( 1+\nu \right) }\leq
S_{B}\left( \int_{Q}\left\vert \nabla \left( v^{\alpha }\eta \right)
\right\vert ^{p}\right) \sup_{\tau\in \left[ t,T\right] }\left(
\int_{B}v^{\lambda }\eta ^{p}\right) ^{\nu }.
\end{equation*}%
By (\ref{veta1}) we have%
\begin{equation*}
\int_{Q}\left\vert \nabla \left( v^{\alpha }\eta \right) \right\vert
^{p}\leq \int_{Q}\left[\frac{p}{c_{1}}\eta^{p-1}\partial_{\tau}\eta +\frac{c_{2}}{c_{1}}\left\vert \nabla \eta
\right\vert ^{p}\right]v^{\lambda }
\end{equation*}%
and 
\begin{equation*}
\sup_{\tau\in \left[ t',T\right] }\left( \int_{B}v^{\lambda }\eta ^{p}\right)
\leq \int_{Q}\left[p\eta^{p-1}\partial_{\tau}\eta +c_{2}\left\vert \nabla \eta \right\vert ^{p}\right]v^{\lambda },
\end{equation*}%
where in the latter we used that $\eta_{2}(t)=0$.
Therefore, it follows that 
\begin{equation*}
\int_{Q}v^{\lambda \left( 1+\nu \right) }\eta ^{p\left( 1+\nu \right) }\leq
S_{B}\left(\int_{Q}\left[\frac{p}{c_{1}}\eta^{p-1}\partial_{\tau}\eta +\frac{c_{2}}{c_{1}}\left\vert \nabla \eta
\right\vert ^{p}\right]v^{\lambda }\right)\left(\int_{Q}\left[p\eta^{p-1}\partial_{\tau}\eta +c_{2}\left\vert \nabla \eta
\right\vert ^{p}\right]v^{\lambda }\right)^{\nu}.
\end{equation*}%
Using that $\eta =1$ in $Q^{\prime }$, $\left\vert \nabla \eta
\right\vert \leq \frac{1}{r-r^{\prime }}$ and $\partial_{\tau}\eta\leq \frac{1}{t^{\prime }-t}$ we obtain 
\begin{align*}
\int_{Q^{\prime }}v^{\lambda \left( 1+\nu \right) }&\leq S_{B} \left[\frac{p}{c_{1}(t^{\prime }-t)} +\frac{c_{2}}{c_{1}(r-r^{\prime })^{p}}\right]\left(\int_{Q}v^{\lambda }\right)\left[\frac{p}{t^{\prime }-t}+\frac{c_{2}}{(r-r^{\prime })^{p}}\right]^{\nu}\left(\int_{Q}v^{\lambda }\right)^{\nu}\\&\leq S_{B}\frac{2^{1+\nu}\max(\frac{p}{c_{1}}, \frac{c_{2}}{c_{1}})\max(p, c_{2})^{\nu}}{\min\left(t^{\prime }-t, (r-r^{\prime })^{p}\right) ^{ 1+\nu }}\left(\int_{Q}v^{\lambda}\right)^{1+\nu}.
\end{align*}%
By Remark \ref{Rempc1} and \ref{Remc1c2}, we have $\frac{p}{c_{1}}\leq C_{p}\lambda^{p-2}$ and 
$\frac{c_{2}}{c_{1}}\leq C_{p}$.
Combining this with the estimate (\ref{c2<}) of Remark \ref{Remc2}, that is,
$c_{2}\leq C_{p}\lambda ^{2-p}$,
we conclude (\ref{vQ'QQ}).
\end{proof}

\subsection{Iterations and the mean value theorem}
\label{SecMV}

\begin{lemma}
\label{LemMoser}Let $B=B\left( x_{0},R\right) $ and $%
T>0$. Let $u$ be a non-negative bounded subsolution of \emph{(\ref{dtv})} in 
$B\times \lbrack 0,T\rbrack$.
Let us set 
\begin{equation*}\label{defQ}
Q=B\times \left[ 0,T\right] .
\end{equation*}%
Then, for the cylinder 
\begin{equation*}\label{defQp}
Q^{\prime }=\frac{1}{2}B\times \left[ 2^{-p}T,T\right] ,
\end{equation*}%
we have for all $\lambda>0$,
\begin{equation}
\left\Vert u\right\Vert _{L^{\infty }(Q^{\prime })}\leq \left( \frac{CS_{B}}{\min\left(T, R^{p}\right)^{ 1+\nu } }\right) ^{\frac{1}{\lambda \nu }}\left\Vert u\right\Vert _{L^{\lambda }(Q)},  \label{v31}\end{equation}%
where $C=C\left( p, \nu , \lambda \right) $.
\end{lemma}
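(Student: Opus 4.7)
The plan is a Moser iteration based on Lemma~\ref{Lemtwo}, followed by an interpolation argument to reach arbitrary $\lambda>0$.

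First consider $\lambda_0 = \max(p, \tfrac{p}{p-1})$. Construct a decreasing sequence of cylinders $Q_k = B(x_0, r_k) \times [t_k, T]$ shrinking from $Q_0 = Q$ to $\bigcap_k Q_k = Q'$ by setting $r_k = \tfrac{R}{2}(1+2^{-k})$ and $t_k = 2^{-p}T(1-2^{-k})$; then both $t_{k+1} - t_k$ and $(r_k - r_{k+1})^p$ are bounded below by $c_p \cdot 2^{-pk}\min(T,R^p)$. With exponents $\lambda_k = \lambda_0 (1+\nu)^k$ and $I_k = \int_{Q_k} u^{\lambda_k}$, applying Lemma~\ref{Lemtwo} to each pair $(Q_{k+1}, Q_k)$ and taking $\lambda_{k+1}$-th roots yields
\begin{equation*}
I_{k+1}^{1/\lambda_{k+1}} \leq \left(\frac{C S_B\, A_k\, 2^{pk(1+\nu)}}{\min(T,R^p)^{1+\nu}}\right)^{1/\lambda_{k+1}} I_k^{1/\lambda_k},
\end{equation*}
where $A_k = \max(\lambda_k^{p-2}, \lambda_k^{(2-p)\nu})$ has at most polynomial growth in $\lambda_k$. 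Iterating and letting $k \to \infty$, the telescope $\sum_{k \geq 0} 1/\lambda_{k+1} = 1/(\lambda_0 \nu)$, together with convergence of $\sum_k k/\lambda_{k+1}$ and $\sum_k (\log A_k)/\lambda_{k+1}$ (due to the geometric growth of $\lambda_k$), collapses the correction factors into a single constant depending only on $p$, $\nu$, and $\lambda_0$. Since $u$ is bounded, $\liminf_k I_k^{1/\lambda_k} \geq \|u\|_{L^\infty(Q')}$, giving (\ref{v31}) for $\lambda = \lambda_0$; starting the iteration directly at any $\lambda \geq \lambda_0$ handles those cases as well.

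For $0 < \lambda < \lambda_0$, parametrize intermediate cylinders $\widehat Q_s = B(x_0, sR) \times [T(1-s)^p, T]$ for $s \in [1/2, 1]$, so that (up to harmless rescaling) $\widehat Q_{1/2} = Q'$ and $\widehat Q_1 = Q$. Apply the $\lambda_0$-estimate inside each pair $\widehat Q_s \subset \widehat Q_{s'}$ with $s < s'$, and combine it with the trivial interpolation $\int u^{\lambda_0} \leq \|u\|_{L^\infty(\widehat Q_{s'})}^{\lambda_0 - \lambda} \int u^\lambda$ and Young's inequality with small parameter to obtain
\begin{equation*}
\|u\|_{L^\infty(\widehat Q_s)} \leq \tfrac{1}{2}\|u\|_{L^\infty(\widehat Q_{s'})} + \frac{C(\lambda)}{(s'-s)^\gamma}\left(\frac{S_B}{\min(T,R^p)^{1+\nu}}\right)^{1/(\lambda\nu)}\|u\|_{L^\lambda(Q)}.
\end{equation*}
The classical iteration lemma for bounded monotone functions with contraction factor $\tfrac{1}{2}$ then absorbs the first term on the right and yields (\ref{v31}).

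The main technical obstacle is the careful bookkeeping of the $\lambda$-dependence of $c_1$ and $c_2$ from Lemma~\ref{Caccioppoli}, which behaves differently for $p \geq 2$ and $p < 2$ (cf.\ Remarks~\ref{Rempc1}--\ref{Remc2}): one must verify that the infinite product of correction factors in the telescoping step remains finite, and that the resulting constant in (\ref{v31}) has precisely the displayed form with the $\lambda$-dependence confined to the prefactor $C(p,\nu,\lambda)$ while the exponent $1/(\lambda\nu)$ is preserved exactly.
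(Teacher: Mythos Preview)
Your proof is correct. The first step (Moser iteration for $\lambda\ge\lambda_0=\max(p,\tfrac{p}{p-1})$) is essentially identical to the paper's: the paper uses the same nested cylinders and the same exponents $\lambda_k=\lambda(1+\nu)^k$, packages the recursion as $J_{k+1}\le A^{k}\Theta^{-1}J_k^{1+\nu}$, and invokes the iteration Lemma~\ref{LemJk} instead of telescoping $\lambda_{k+1}$-th roots, which amounts to the same computation.

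The passage to small $\lambda$ is where you and the paper diverge. You interpolate $\int u^{\lambda_0}\le\|u\|_{L^\infty}^{\lambda_0-\lambda}\int u^\lambda$, split via Young's inequality into $\tfrac12\|u\|_{L^\infty(\widehat Q_{s'})}$ plus a remainder, and close with the Giaquinta--Giusti absorption lemma. The paper instead covers each intermediate cylinder $\widetilde Q_k$ by small cylinders $B(x,\rho_k)\times[s-t_k,s)$ of the \emph{exact} proportions needed to invoke the already--proved $\lambda_0$-estimate, obtains a recursion $J_{k+1}\le A^k\Theta^{-1}J_k^{1+\omega}$ for $J_k=\|u\|_{L^\infty(\widetilde Q_k)}^{-(\lambda_0-\lambda)}$, and reuses Lemma~\ref{LemJk}. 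The paper's route has the advantage that it applies the $\lambda_0$-estimate verbatim (inner radius equal to half the outer radius), whereas your route needs a version of that estimate with an explicit dependence on the gap $s'-s$ between concentric cylinders; your remark ``up to harmless rescaling'' hides this, and it is indeed a routine rerun of the Moser iteration with shifted sequences, but it is worth stating. Conversely, your route avoids the covering step and the somewhat opaque substitution $J_k=\|u\|_{L^\infty}^{-(\lambda_0-\lambda)}$, and the exponent $1/(\lambda\nu)$ drops out cleanly from Young's inequality.
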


\FRAME{ftbpFU}{2.1398in}{1.644in}{0pt}{\Qcb{Cylinders $Q$ and $Q^{\prime }$}%
}{\Qlb{pic2m}}{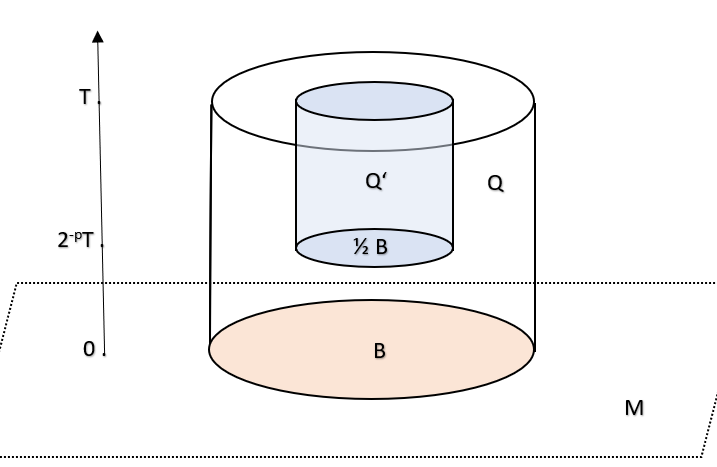}{\special{language "Scientific Word";type		"GRAPHIC";maintain-aspect-ratio TRUE;display "USEDEF";valid_file "F";width	2.1398in;height 1.644in;depth 0pt;original-width 10.0838in;original-height	7.7406in;cropleft "0";croptop "1";cropright "1";cropbottom "0";filename	'pic2.png';file-properties "XNPEU";}}

\begin{proof}
Let us first prove (\ref{v31}) for $\lambda \geq \max(p, \frac{p}{p-1})$.
Consider, for $k\geq 0$, sequences 
$$r_{k}=\left( \frac{1}{2}+2^{-(k+1)}\right) R\quad \textnormal{and} \quad t_{k}=\left(\left(\frac{1}{2}\right)^{p}-2^{-p(k+1)}\right)T$$ and set 
\begin{equation*}
B_{k}=B\left( x_{0},r_{k}\right) ,\ \ \ \ Q_{k}=B_{k}\times \left[ t_{k},T\right]
\end{equation*}%
so that 
\begin{equation*}
\,B_{0}=B,\ \ \ \ Q_{0}=Q\ \ \ \ \text{and\ \ \ }Q_{\infty}:=\lim_{k\rightarrow \infty }Q_{k}=Q^{\prime }
\end{equation*}%
(see Fig. \ref{pic3m}).\FRAME{dtbpFU}{2.5607in}{1.6588in}{0pt}{\Qcb{%
Cylinders $Q_{k}$}}{\Qlb{pic3m}}{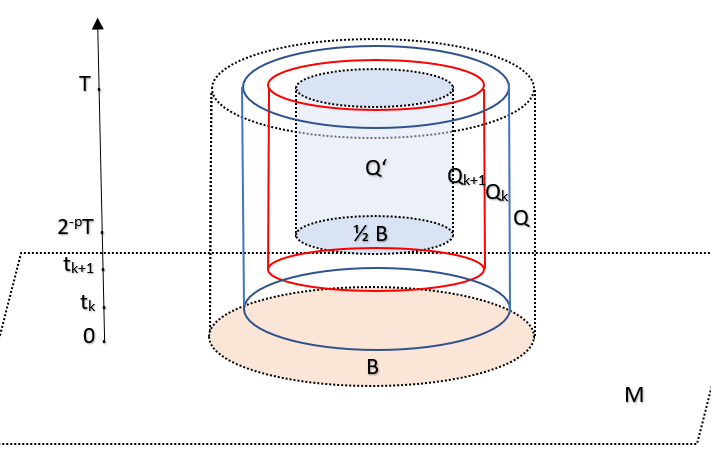}{\special{language"Scientific Word";type "GRAPHIC";maintain-aspect-ratio TRUE;display "USEDEF";valid_file "F";width 2.5607in;height 1.6588in;depth 0pt;original-width 11.078in;original-height 7.1615in;cropleft "0";croptop "1";cropright "1";cropbottom "0";filename 'pic3.png';file-properties "XNPEU";}}

Set also $\lambda _{k}=\lambda \left( 1+\nu \right) ^{k}$
and%
\begin{equation*}
J_{k}=\int_{Q_{k}}u^{\lambda _{k}}.
\end{equation*}%
By (\ref{vQ'QQ}) and using $r_{k}-r_{k+1}=2^{-\left( k+2\right) }R$ and $t_{k+1}-t_{k}=(2^{p}-1)2^{-p(k+2)}T$, we get
\begin{align*}
J_{k+1}& \leq \frac{CS_{B_{k}}\max\left(\lambda_{k}^{p-2}, \lambda _{k}^{\left( 2-p\right) \nu }\right)}{\min\left(t_{k+1}-t_{k}, \left(r_{k}-r_{k+1}\right)^{p}\right)^{ 1+\nu  }}J_{k}^{1+\nu } \\
& = \frac{CS_{B_{k}}2^{kp(1+\nu )}\max\left(\left( 1+\nu \right) ^{k\left( p-2\right)}\lambda^{p-2}, \left( 1+\nu \right) ^{k\left( 2-p\right) \nu}\lambda ^{\left( 2-p\right) \nu }\right)}{\min\left(T,R^{p} \right)^{1+\nu}}J_{k}^{1+\nu } \\
& \leq A^{k}\Theta ^{-1}J_{k}^{1+\nu },
\end{align*}%
where 
$A=2^{p\left( 1+\nu \right) }\max\left(\left( 1+\nu \right) ^{\left( p-2\right) _{+}}, \left( 1+\nu \right) ^{\nu\left( 2-p\right) _{+}}\right)$
and
\begin{equation*}
\Theta ^{-1}=\frac{CS_{B}}{\min\left(T,R^{p} \right)^{1+\nu}},
\end{equation*}%
where we have absorbed $\max\left(\lambda^{p-2}, \lambda ^{\nu\left( 2-p\right)}\right)$ into $C$. 

By Lemma \ref{LemJk} (see Appendix), we conclude that 
\begin{align*}
J_{k}& \leq \left( \left( A^{1/\nu }\Theta ^{-1}\right) ^{1/\nu
}J_{0}\right) ^{\left( 1+\nu \right) ^{k}}\left( A^{-1/\nu }\Theta \right)
^{1/\nu } \\
& =A^{\frac{(1+\nu )^{k}-1}{\nu ^{2}}}\Theta ^{-\frac{(1+\nu )^{k}-1}{\nu }%
}J_{0}^{(1+\nu )^{k}}.
\end{align*}%
It follows that%
\begin{equation*}
\left( \int_{Q_{k}}u^{\lambda _{k}}\right) ^{1/\lambda _{k}}\leq A^{\frac{%
1-(1+\nu )^{-k}}{\lambda \nu ^{2}}}\Theta ^{-\frac{1-(1+\nu )^{-k}}{\lambda
\nu }}\left( \int_{Q}u^{\lambda }\right) ^{1/\lambda }.
\end{equation*}%
As $k\rightarrow \infty $, we obtain%
\begin{align*}
\left\Vert u\right\Vert _{L^{\infty }(Q^{\prime })}& \leq A^{\frac{1}{\lambda\nu ^{2}}}\Theta ^{-\frac{1}{\lambda \nu }}\left\Vert u\right\Vert_{L^{\lambda }(Q)} \\
& = A^{\frac{1}{\lambda\nu ^{2}}}\left(\frac{CS_{B}}{\min\left(T,R^{p} \right)^{1+\nu}}\right) ^{\frac{1%
}{\lambda \nu }}\left\Vert u\right\Vert _{L^{\lambda }(Q)} \\
& =\left(\frac{CS_{B}}{\min\left(T,R^{p} \right)^{1+\nu}}\right) ^{\frac{1}{\lambda\nu }}%
\left\Vert u\right\Vert _{L^{\lambda }(Q)},
\end{align*}%
where $A^{1/\nu}$ was absorbed into $C$, which completes the proof in the case when $\lambda\geq \max(p, \frac{p}{p-1})$.

Now we prove (\ref{v31}) for any $\lambda >0$. Let $\lambda _{0}$ be such
that (\ref{v31}) is already known for $\lambda =\lambda _{0}$, for example, 
$\lambda _{0}=\max(p, pq)$, and let $\lambda <\lambda _{0}.$


Consider, for $k\geq 0$, sequences
$r_{k}=\left(1-\frac{1}{2^{k+1}}\right)R,$
and $t_{k}=2^{-p(k+1)}T$
so that $$r_{0}=\frac{1}{2}r,\quad t_{0}=2^{-p}T\quad \textnormal{and}\quad r_{k}\uparrow R,\quad t_{k}\downarrow 0.$$
Set 
$B_{k}=B(x_{0},r_{k})$
and $\widetilde{Q}_{k}=B_{k}\times [t_{k}, T]$.
Denoting also $B=B(x_{0},R),$ we see that%
\begin{equation*}
	\frac{1}{2}B\subset B_{k}\subset B\ \ \text{and\ \ }B_{k}\uparrow B
\end{equation*}%
as $k\rightarrow \infty $ and thus $$\widetilde{Q}_{0}=Q^{\prime}\ \ \text{and\ \ }\widetilde{Q}_{k}\uparrow Q.$$ Set also 
$\rho _{k}=r_{k+1}-r_{k}=\frac{1}{2^{k+2}}R$.

For any point $(x, \tau)\in \widetilde{Q}_{k}$, let $s$ be such that $$\tau<s<\min\left(\tau+(1-2^{-p})t_{k}, T\right).$$
\FRAME{dtbpFU}{2.1662in}{1.8558in}{0pt}{\Qcb{Balls $B_{k}$ and $B(x, \rho_{k})$
}}{\Qlb{pic4m}}{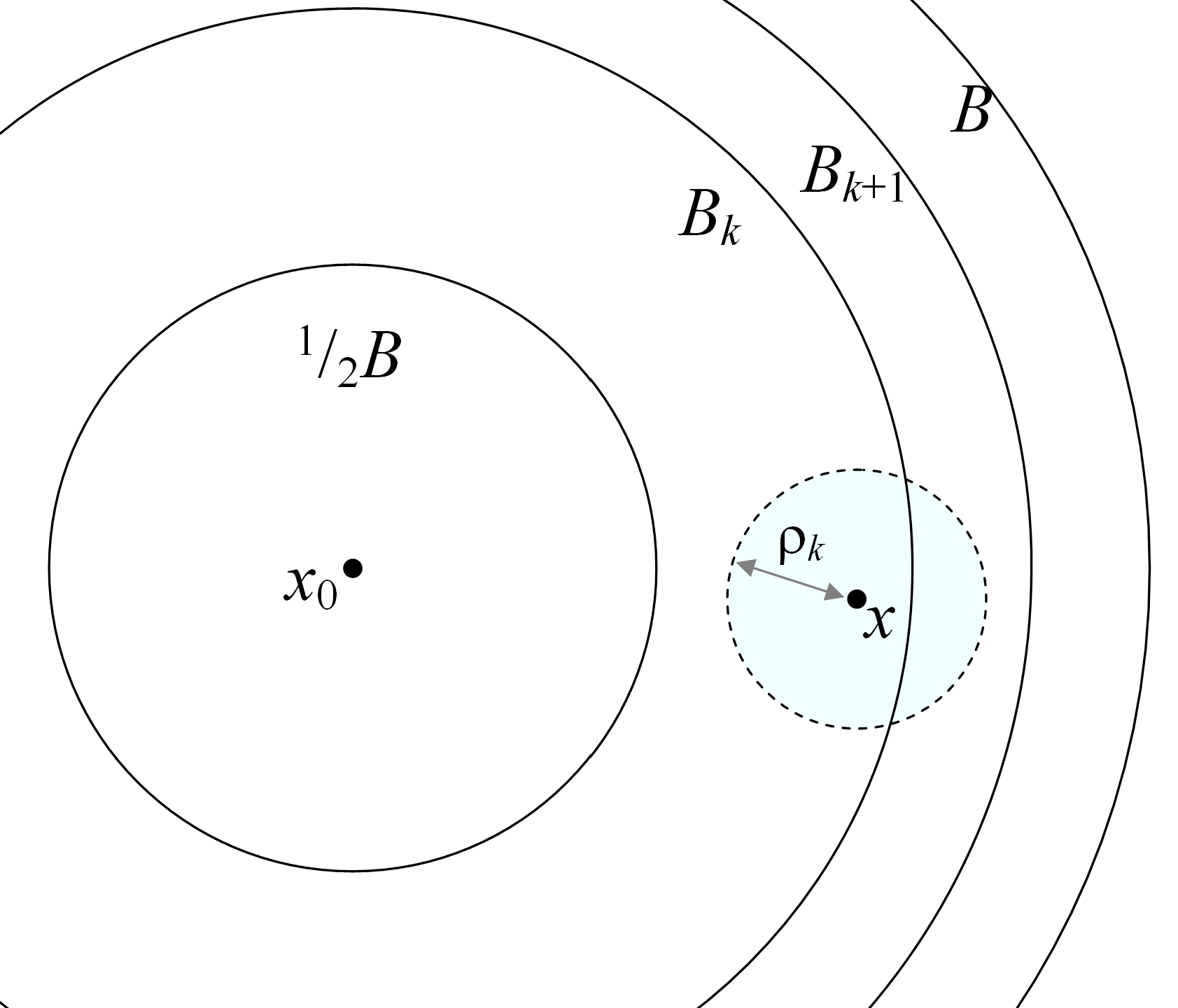}{\special{language"Scientific Word";type "GRAPHIC";maintain-aspect-ratio TRUE;display "USEDEF";valid_file "F";width 2.1662in;height 1.8558in;depth 0pt;original-width 7.8574in;original-height 6.7229in;cropleft "0";croptop "1";cropright "1";cropbottom "0";filename 'pic4.png';file-properties "XNPEU";}}
Then applying (\ref{v31}) from the first part of this proof in $%
B\left( x,\rho _{k}\right)\times [s-t_{k}, s)$, we obtain%
\begin{align*}
	\left\Vert u\right\Vert _{L^{\infty }(B(x,\frac{1}{2}\rho
		_{k})\times [s-(1-2^{-p})t_{k}, s))}^{\lambda _{0}}& \leq \left( \frac{CS_{B(x,\rho
			_{k})}}{\min\left(t_{k}, \rho_{k}^{p}\right)^{ 1+\nu } }\right) ^{\frac{1}{\nu }}\int_{B\left( x,\rho _{k}\right)\times [s-t_{k}, s)}u^{\lambda_{0}} \\
	& \leq \left( \frac{CS_{B(x,\rho
			_{k})}}{\min\left(t_{k}, \rho_{k}^{p}\right)^{ 1+\nu } }\right) ^{\frac{1}{\nu }}\left\Vert
	u\right\Vert _{L^{\infty }(B\left( x,\rho _{k}\right)\times [s-t_{k}, s))}^{\lambda _{0}-\lambda
	}\int_{B\left( x,\rho _{k}\right)\times [s-t_{k}, s)}u^{\lambda }.
\end{align*}%
Since 
$B\left( x,\rho _{k}\right) \subset B_{k+1}\subset B,$
we have by the monotonicity of $S$,
$S_{B(x,\rho_{k})}\leq S_{B}.$
Hence, we obtain 
\begin{equation*}
	\left\Vert u\right\Vert _{L^{\infty }(B(x,\frac{1}{2}\rho
		_{k})\times [s-(1-2^{-p})t_{k}, s))}^{\lambda _{0}}\leq \left( \frac{C2^{kp(1+\nu)}S_{B}}{\min\left(T, R^{p}\right)^{ 1+\nu } }\right) ^{\frac{1}{\nu }}\left\Vert u\right\Vert _{L^{\infty }(\widetilde{Q}_{k+1})}^{\lambda
		_{0}-\lambda }\int_{Q}u^{\lambda }.
\end{equation*}%
Covering $\widetilde{Q}_{k}$ by a sequence of sets $B(x,\frac{1}{2}\rho _{k})\times [s-(1-2^{-p})t_{k}, s))$ with $%
(x, \tau)\in \widetilde{Q}_{k}$, we obtain%
\begin{equation}
	\left\Vert u\right\Vert _{L^{\infty }(\widetilde{Q}_{k})}^{\lambda _{0}}\leq \left( \frac{C2^{kp(1+\nu)}S_{B}}{\min\left(T, R^{p}\right)^{ 1+\nu } }\right) ^{\frac{1}{\nu }}\left\Vert u\right\Vert
	_{L^{\infty }(\widetilde{Q}_{k+1})}^{\lambda _{0}-\lambda }\int_{Q}u^{\lambda }.
	\label{Bkt}
\end{equation}%
Setting 
$J_{k}=\left\Vert u\right\Vert _{L^{\infty }(\widetilde{Q}_{k})}^{-\left( \lambda
		_{0}-\lambda \right) },$
we rewrite (\ref{Bkt}) as follows:%
\begin{equation*}
	J_{k+1}\leq \frac{A^{k}}{\Theta }J_{k}^{\frac{\lambda _{0}}{\lambda
			_{0}-\lambda }}=\frac{A^{k}}{\Theta }J_{k}^{1+\omega },
\end{equation*}%
where $A=2^{\frac{p}{\nu}(1+\nu)}$, $\omega  =\frac{\lambda _{0}}{\lambda _{0}-\lambda }-1=\frac{\lambda }{%
	\lambda _{0}-\lambda }$ and 
\begin{align*}
	\Theta ^{-1}& =\left( \frac{CS_{B}}{\min\left(T, R^{p}\right)^{ 1+\nu } }\right) ^{\frac{1}{\nu }}\int_{Q}u^{\lambda }.
\end{align*}%
Applying Lemma \ref{LemJk}, we obtain%
\begin{equation*}
	J_{k}\leq \left( \frac{J_{0}}{\left( A^{-1/\omega }\Theta \right) ^{1/\omega
	}}\right) ^{\left( 1+\omega \right) ^{k}}\left( A^{-1/\omega }\Theta \right)
	^{1/\omega },
\end{equation*}%
that is, 
\begin{equation*}
	J_{0}\geq \left( A^{-1/\omega }\Theta \right) ^{1/\omega }\left( \left(
	A^{1/\omega }\Theta ^{-1}\right) ^{1/\omega }J_{k}\right) ^{\frac{1}{\left(
			1+\omega \right) ^{k}}}.
\end{equation*}%
Since 
$J_{k}\geq \left\Vert u\right\Vert _{L^{\infty }(\widetilde{Q}_{k})}^{-\left( \lambda
		_{0}-\lambda \right) }=:\func{const}>0,$
we see that%
\begin{equation*}
	\liminf_{k\rightarrow \infty }\left( \left( A^{1/\omega }\Theta ^{-1}\right)
	^{1/\omega }J_{k}\right) ^{\frac{1}{\left( 1+\omega \right) ^{k}}}\geq 1,
\end{equation*}%
whence%
\begin{equation*}
	J_{0}\geq \left( A^{-1/\omega }\Theta \right) ^{1/\omega }.
\end{equation*}%
It follows that $J_{0}^{-1}\leq A^{1/\omega ^{2}}\Theta ^{-1/\omega },$
that is,%
\begin{equation*}
	\left\Vert u\right\Vert _{L^{\infty }(\widetilde{Q}_{0})}^{\lambda _{0}-\lambda
	}\leq A^{1/\omega ^{2}}\left( \left( \frac{CS_{B}}{\min\left(T, R^{p}\right)^{ 1+\nu } }\right) ^{\frac{1}{\nu }}%
	\int_{Q}u^{\lambda }\right) ^{1/\omega },
\end{equation*}%
and finally
\begin{equation*}
	\left\Vert u\right\Vert _{L^{\infty }(Q^{\prime}
		)}\leq \left( \left( \frac{CS_{B}}{\min\left(T, R^{p}\right)^{ 1+\nu } }\right) ^{\frac{1}{\nu }}\int_{Q
	}u^{\lambda }\right) ^{1/\lambda },
\end{equation*}%
where $A^{1/\omega ^{2}}$ is absorbed into $C$ and this finishes the proof.
\end{proof}

\begin{remark}\normalfont
Let $\iota(B)$ be the normalized Sobolev constant defined by (\ref{defiota}), that is, $$\iota (B)=\dfrac{1}{\mu (B)}\left( \dfrac{r(B)^{p}}{S_{B}}\right) ^{\frac{1}{\nu}},$$ (see Subsection \ref{secMoser} for the meaning of this quantity).
If $R=T^{1/p}$, that is, $B=B(x_{0}, T^{1/p})$, then $$\left( \frac{CS_{B}}{T^{1+\nu}}\right)^{\frac{1}{\lambda\nu}}=\left(\frac{C\left(T^{1/p}\right)^{p}}{T^{1+\nu}\left(\iota(B)\mu(B)\right)^{\nu}}\right)^{\frac{1}{\lambda\nu}}=\left(\frac{C}{T\iota(B)\mu(B)}\right)^{\frac{1}{\lambda}}.$$ Hence, it follows from (\ref{v31}), that \begin{equation}\label{v3}\left\Vert u\right\Vert _{L^{\infty }(Q^{\prime })}\leq \left(\frac{C}{T\iota(B)\mu(B)}\right)^{\frac{1}{\lambda}}\left\Vert u\right\Vert _{L^{\lambda }(Q)}.\end{equation}
\end{remark}

\section{Sub-Gaussian upper bounds for subsolutions}
\label{secgauss}

The next lemma contains Lemma \ref{maintechlem} from the Introduction.
\begin{lemma}\label{Gaussup}
Let $u$ be a non-negative bounded subsolution of (\ref{dtv}) in $M\times [0, \infty)$ and set $A=\textnormal{supp}~u_{0}$. Fix some $\lambda \geq \max\left(p, \frac{p}{p-1}\right)$. Let $\rho>0$ be defined by (\ref{defofrho}) and let $\gamma$ be a regular function in $[0, \infty)$ satisfying for all $t>0$, \begin{equation}\label{condforgamma}\int_{A_{\rho}}u^{\lambda}(\cdot, t)\leq \frac{1}{\gamma(t)}.\end{equation} Then, for all $x\in M$ and all $T>0$, \begin{align}\nonumber||u(\cdot,T)||_{L^{\infty}\left(B(x,\frac{1}{2}T^{1/p})\right)}\leq&\left(\frac{C}{\iota(B\left( x,T^{1/p}\right))\mu(B\left( x,T^{1/p}\right))\gamma(cT)}\right)^{\frac{1}{\lambda}}\\&\label{uxTupper}\times\exp\left(-c\left(\frac{d(x,A)}{T^{1/p}}\right)^{\frac{p}{p-1}}\right),\end{align} where $\iota$ is as in (\ref{defiota}), $C=C(\Theta, \theta, p, \nu, \lambda)>0$ and $c=c(\theta, p, \lambda)>0$.
\end{lemma}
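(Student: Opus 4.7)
The plan is to combine the Davies--Gaffney type inequality from Corollary \ref{CorD} with the mean value inequality from Lemma \ref{LemMoser}, following the philosophy of Davies \cite{davies1992heat} in the linear heat-equation setting. The role of $R$ in both results will be synchronized by taking $R=T^{1/p}$, so that Corollary \ref{CorD} provides bounds on $\int_{B}u^{\lambda}(\cdot,t)$ for $t\simeq T$, and Lemma \ref{LemMoser} then converts an $L^{\lambda}$-integral in space-time into an $L^{\infty}$-bound at the apex of the cylinder.

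Concretely, fix $x\in M$ and $T>0$, set $R=T^{1/p}$ and $B=B(x,T^{1/p})$, so that $R^{p}=T$. By translation invariance in time, Lemma \ref{LemMoser} (and in particular the simplified form recorded in the remark, equation (\ref{v3})) applied on the cylinder $Q=B\times[T/2,T]$ with upper apex $Q'=\tfrac12 B\times[(1+2^{-p})T/2,T]$ yields
\begin{equation*}
\|u(\cdot,T)\|_{L^{\infty}(\tfrac12 B)}^{\lambda}
\;\leq\;\frac{C}{T\,\iota(B)\,\mu(B)}\int_{T/2}^{T}\!\!\int_{B}u^{\lambda}(\cdot,t)\,dt.
\end{equation*}
Here I use that $\min(T/2,R^{p})=T/2$, so the $S_{B}$-dependence collapses exactly as in (\ref{v3}).

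Next, the hypothesis (\ref{condforgamma}) on $\gamma$ and the choice of $\rho$ in (\ref{defofrho}) place us in the setting of Corollary \ref{CorD}, which gives, for every $t\simeq T$,
\begin{equation*}
\int_{B}u^{\lambda}(\cdot,t)\;\leq\;\frac{C'}{\gamma(t)}\exp\!\Bigl(-c'\bigl(d(x,A)/t^{1/p}\bigr)^{p/(p-1)}\Bigr).
\end{equation*}
For $t\in[T/2,T]$ we have $t\simeq T\simeq R^{p}$, so the corollary applies. Two elementary monotonicities then do the rest: since $\gamma$ is increasing, $1/\gamma(t)\leq 1/\gamma(T/2)$, and since $t^{1/p}\leq T^{1/p}$ we get $\bigl(d(x,A)/t^{1/p}\bigr)^{p/(p-1)}\geq \bigl(d(x,A)/T^{1/p}\bigr)^{p/(p-1)}$, so the exponential is bounded by $\exp\bigl(-c'(d(x,A)/T^{1/p})^{p/(p-1)}\bigr)$. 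Integrating over $[T/2,T]$ produces a factor $T/2$ which cancels the $1/T$ in the mean value step, and we arrive at
\begin{equation*}
\|u(\cdot,T)\|_{L^{\infty}(\tfrac12 B)}^{\lambda}
\;\leq\;\frac{C''}{\iota(B)\,\mu(B)\,\gamma(T/2)}\exp\!\Bigl(-c'\bigl(d(x,A)/T^{1/p}\bigr)^{p/(p-1)}\Bigr).
\end{equation*}
Taking the $\lambda$-th root yields (\ref{uxTupper}) with $\gamma(cT)=\gamma(T/2)$ and with the Gaussian exponent $c'/\lambda$, which depends only on $\theta,p,\lambda$, while $C$ inherits its dependence on $\Theta,\theta,p,\nu,\lambda$ from the constants in Corollary \ref{CorD} and Lemma \ref{LemMoser}.

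The only real subtlety lies in matching the time-scale assumption $t\simeq R^{p}$ required by Corollary \ref{CorD} with the time-interval over which the mean value inequality integrates; this is the reason for working on $[T/2,T]$ rather than $[0,T]$ and for the appearance of the argument $cT=T/2$ in $\gamma$. The assumption $\lambda\geq\max(p,p/(p-1))$ is precisely what is needed so that both Corollary \ref{CorD} and the underlying Caccioppoli estimate (Lemma \ref{Caccioppoli}) used in the proof of Lemma \ref{LemMoser} are available. Everything else is bookkeeping of constants and absorbing $\lambda$-dependent factors into $C$ and $c$.
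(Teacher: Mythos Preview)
Your proof is correct and follows essentially the same approach as the paper: apply Corollary \ref{CorD} for $t\simeq T$, integrate in time over an interval of length comparable to $T$, and combine with the mean value inequality (\ref{v3}) (applied on a time-translated cylinder) to pass to the $L^{\infty}$ bound. The only cosmetic difference is that the paper integrates over $[(1-2^{-p})T,T]$ while you integrate over $[T/2,T]$; both choices give $\min(\text{time length},R^{p})\simeq T$ with $R=T^{1/p}$, so the constants are absorbed in the same way.
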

\begin{proof}
From Corollary \ref{CorD} we get, for all $t\simeq T$, $$\int_{B\left( x,T^{1/p}\right)}u^{\lambda}(\cdot, t)\leq\frac{C^{\prime}}{\gamma(t)} \exp\left(-c^{\prime}\left(\frac{d(x, A)}{t^{1/p}}\right)^{\frac{p}{p-1}}\right).$$ Setting $B=B\left( x,T^{1/p}\right)$ and integrating this inequality over $[(1-2^{-p})T, T]$ yields \begin{align*}\int_{B\times[(1-2^{-p})T, T]}u^{\lambda}\leq C^{\prime}\frac{T}{\gamma(cT)}\exp\left(-c^{\prime}\left(\frac{d(x,A)}{T^{1/p}}\right)^{\frac{p}{p-1}}\right).
\end{align*}
Combining this with (\ref{v3}), we obtain \begin{align*}||u(\cdot,T)||_{L^{\infty}\left(\frac{1}{2}B\right)}\leq\left(\frac{C}{\iota(B)\mu(B)\gamma(cT)}\right)^{\frac{1}{\lambda}}\exp\left(-\frac{c^{\prime}}{\lambda}\left(\frac{d(x,A)}{T^{1/p}}\right)^{\frac{p}{p-1}}\right),\end{align*} 
which proves (\ref{uxTupper}).
\end{proof}

\begin{definition}
We say that the manifold $M$ satisfies the \textit{doubling condition} if there exist positive constants $\alpha$ and $C$ such that, for all $x\in M$ and for all $0<r\leq R$, 
\begin{equation}\label{volumeregularity}
\frac{\mu(B(x, R))}{\mu(B(x, r))}\leq C\left(\frac{R}{r}\right)^{\alpha}.
\end{equation}	
\end{definition}

\begin{remark}
If $M$ satisfies the relative Faber-Krahn inequality, then it can be shown as in \cite{grigor1994heat}, which uses arguments from \cite{carron1996inegalites}, that the upper bound (\ref{volumeregularity}) holds with $\alpha=p/\nu$.
\end{remark}

The following result contains Theorem \ref{mainthmint}.

\begin{theorem}\label{corcurv}
Assume that $M$ satisfies the relative Faber-Krahn inequality. Let $u$ be a non-negative bounded subsolution of (\ref{dtv}) in $M\times [0, \infty)$ and set $A=\textnormal{supp}~u_{0}$. Then, for all $x\in M$ and all $T>0$, \begin{equation}\label{curvature}||u(\cdot,T)||_{L^{\infty}\left(B(x, \frac{1}{2}T^{1/p})\right)}\leq\frac{C}{\mu(B(x, T^{1/p}))}\exp\left(-c\left(\frac{d(x,A)}{T^{1/p}}\right)^{\frac{p}{p-1}}\right),\end{equation} where $c=c(p)>0$ and $C>0$ depends on $p, n$ and $||u_{0}||_{L^{1}(M)}$.
\end{theorem}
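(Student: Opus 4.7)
The strategy is to apply Lemma \ref{Gaussup} with a tailored regular function $\gamma$. The relative Faber-Krahn hypothesis gives two ingredients: a uniform lower bound $\iota(B) \geq \iota_{0} > 0$ for every geodesic ball $B$, and (by the remark preceding the theorem) volume doubling (\ref{volumeregularity}) with exponent $\alpha = p/\nu$. As preparation, I apply the mean value inequality Lemma \ref{LemMoser} with $\lambda = 1$ on the cylinder $B(y, t^{1/p}) \times [0, t]$, use $\iota(B) \geq \iota_{0}$, and invoke the $L^{1}$-monotonicity of Lemma \ref{monl1} to control $\int_{B(y,t^{1/p})} u(\cdot, s) \leq \|u_{0}\|_{L^{1}(M)}$. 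This yields the global on-diagonal estimate $u(y, t) \leq C \|u_{0}\|_{L^{1}(M)}/\mu(B(y, t^{1/p}))$, valid for every $y \in M$ and $t > 0$.

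Now fix $\lambda \geq \max(p, p/(p-1))$ large (to be chosen later). Combining the on-diagonal bound with volume doubling (used to compare $\mu(B(y, t^{1/p}))$ for $y \in A_{\rho}$ with $\mu(B(x, t^{1/p}))$ at the cost of a polynomial factor in $\rho/t^{1/p}$) and with $L^{1}$-monotonicity, one obtains a bound of the form
\begin{equation*}
\int_{A_{\rho}} u^{\lambda}(\cdot, t) \leq \frac{C(\|u_{0}\|_{L^{1}})\,(1 + \rho/t^{1/p})^{\alpha(\lambda-1)}}{\mu(B(x, t^{1/p}))^{\lambda - 1}}.
\end{equation*}
Defining $\gamma(t)$ as the reciprocal of the right-hand side, monotonicity of $\gamma$ follows from that of $\mu(B(x, \cdot))$ and of $(1 + \rho/t^{1/p})^{-\alpha(\lambda-1)}$, while polynomial growth is a direct consequence of volume doubling; hence $\gamma$ is regular. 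Plugging $\gamma$ into Lemma \ref{Gaussup} and using $\iota(B) \geq \iota_{0}$ produces
\begin{equation*}
\|u(\cdot, T)\|_{L^{\infty}(B(x, T^{1/p}/2))} \leq \frac{C\,(1 + \rho/T^{1/p})^{\alpha(\lambda - 1)/\lambda}}{\mu(B(x, T^{1/p}))} \exp\!\left(- c\,(d(x, A)/T^{1/p})^{p/(p-1)}\right).
\end{equation*}

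To finish, since $\rho \leq C(d(x, A) + T^{1/p})$, the prefactor $(1 + \rho/T^{1/p})^{\alpha(\lambda - 1)/\lambda}$ is polynomial in $s := d(x, A)/T^{1/p}$. The elementary inequality $s^{k} \leq C_{k,\varepsilon}\exp(\varepsilon\,s^{p/(p-1)})$, valid for any $\varepsilon > 0$ and $k \geq 0$, allows this polynomial to be absorbed into the Gaussian at the cost of slightly decreasing $c$, yielding the desired estimate (\ref{curvature}). The main technical obstacle lies in the derivation of the $\gamma$-bound: because $A$, and hence $A_{\rho}$, may extend far from $x$, the quantity $\mu(B(y, t^{1/p}))^{-1}$ for $y \in A_{\rho}$ cannot be directly compared to $\mu(B(x, t^{1/p}))^{-1}$ by a naive application of doubling, and one must carefully combine the pointwise on-diagonal estimate, the $L^{1}$-monotonicity, and the specific scaling $\rho = c\max(d(x, A), C T^{1/p})$ to arrive at a regular $\gamma$ strong enough for Lemma \ref{Gaussup} to produce the sharp $\mu(B(x, T^{1/p}))^{-1}$ prefactor.
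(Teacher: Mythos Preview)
Your strategy is essentially the paper's: derive the on-diagonal bound $u(y,t)\le C/\mu(B(y,t^{1/p}))$ from Lemma~\ref{LemMoser} with $\lambda=1$, the uniform bound $\iota(B)\ge\iota_0$, and Lemma~\ref{monl1}; use volume doubling to manufacture a regular $\gamma$ satisfying (\ref{condforgamma}); apply Lemma~\ref{Gaussup}; and finally absorb the polynomial prefactor in $\rho/T^{1/p}$ into the sub-Gaussian term.

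The one place where the paper is more concrete than you is precisely the step you flag as the ``main technical obstacle.'' Rather than comparing $\mu(B(y,t^{1/p}))$ for $y\in A_\rho$ directly with $\mu(B(x,t^{1/p}))$ (which, as you note, fails naively because $A_\rho$ may extend far from $x$), the paper fixes a reference point $x_0\in A$, covers $A_\rho$ by balls $B(x_i,\tfrac12 T^{1/p})$ with $x_i\in A_\rho$, and uses the inclusion $B(x_0,\rho)\subset B(x_i,2\rho)$ together with doubling to obtain
\[
\frac{1}{\mu(B(x_i,T^{1/p}))}\le C\Bigl(\frac{\rho}{T^{1/p}}\Bigr)^{\alpha}\frac{1}{\mu(B(x_0,T^{1/p}))}.
\]
Raising the resulting uniform $L^\infty(A_\rho)$ bound to the power $\lambda$ and multiplying by $\mu(B(x_0,\rho))$ (rather than your $(L^\infty)^{\lambda-1}\times L^1$ split) gives
\[
\frac{1}{\gamma(t)}=C\Bigl(\frac{\rho}{t^{1/p}}\Bigr)^{\alpha\lambda}\frac{\mu(B(x_0,\rho))}{\mu(B(x_0,t^{1/p}))^{\lambda}}.
\]
Only \emph{after} Lemma~\ref{Gaussup} has been applied does the paper convert $\mu(B(x_0,T^{1/p}))$ to $\mu(B(x,T^{1/p}))$, again by doubling and the relation $d(x,x_0)\lesssim\rho$. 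Routing the comparison through $x_0\in A$ rather than through $x$ is exactly what resolves the obstacle you identify.
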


\begin{proof}
Fix some $\lambda\geq \max\left(p, \frac{p}{p-1}\right)$, $x\in M$ and $T>0$. Let us first consider the case when $d(x, A)>C^{\prime}T^{1/p}$ for some large constant $C^{\prime}>0$. Then it follows from (\ref{defofrho}) that $\rho=c^{\prime}d(x, A)$. Let us cover $A_{\rho}$ by a countable sequence of balls $B(x_{i}, \frac{1}{2}T^{1/p})$, with $x_{i}\in A_{\rho}$ and fix some $x_{0}\in A$. Then we obtain $d(x_{i}, A)\leq c^{\prime}d(x, A)$ and thus, \begin{equation}\label{ballincludedrho}B(x_{0}, \rho)\subset B(x_{i}, 2\rho).\end{equation}

From (\ref{v3}) we obtain $$||u(\cdot,T)||_{L^{\infty}\left(B(x_{i}, \frac{1}{2}T^{1/p})\right)}\leq \frac{C}{T\iota(B(x_{i}, T^{1/p}))\mu(B(x_{i}, T^{1/p}))}\int_{0}^{T}\int_{B(x_{i}, T^{1/p})}u,$$ where $C=C(p, \nu)$.
By assumption, we have that $\iota (B(x_{i}, T^{1/p}))\geq \func{const}>0$.
Hence, using Lemma \ref{monl1}, we obtain \begin{equation}\label{appof1mean}||u(\cdot,T)||_{L^{\infty}\left(B(x_{i}, \frac{1}{2}T^{1/p})\right)}\leq \frac{C}{\mu(B(x_{i}, T^{1/p}))}||u_{0}||_{L^{1}(M)}.\end{equation}
Let $\alpha=p/\nu$. Then it follows from (\ref{ballincludedrho}) and property (\ref{volumeregularity}), that \begin{align}
\frac{1}{\mu(B(x_{i}, T^{1/p}))}\leq \frac{\mu(B(x_{0}, \rho))}{\mu(B(x_{i}, T^{1/p}))\mu(B(x_{0}, T^{1/p}))}&\leq \frac{\mu(B(x_{i}, 2\rho))}{\mu(B(x_{i}, T^{1/p}))\mu(B(x_{0}, T^{1/p}))}\nonumber\\&\leq C\left(\frac{\rho}{T^{1/p}}\right)^{\alpha}\frac{1}{\mu(B(x_{0}, T^{1/p}))}\label{upperball}.
\end{align}
Combining (\ref{upperball}) with (\ref{appof1mean}), we deduce, for all $i$, $$||u(\cdot,T)||_{L^{\infty}\left(B(x_{i}, \frac{1}{2}T^{1/p})\right)}\leq C\left(\frac{\rho}{T^{1/p}}\right)^{\alpha}\frac{1}{\mu(B(x_{0}, T^{1/p}))},$$ and thus, $$||u(\cdot,T)||_{L^{\infty}\left(A_{\rho}\right)}\leq C\left(\frac{\rho}{T^{1/p}}\right)^{\alpha}\frac{1}{\mu(B(x_{0}, T^{1/p}))}.$$
It follows that \begin{equation}\label{decayofußlamb}\int_{A_{\rho}}u^{\lambda}(\cdot, T)\leq C\left(\frac{\rho}{T^{1/p}}\right)^{\alpha\lambda}\frac{\mu(B(x_{0}, \rho))}{\mu(B(x_{0}, T^{1/p}))^{\lambda}}.\end{equation}
In particular, this implies that the condition (\ref{condforgamma}) is satisfied with $\gamma$ defined by $$\frac{1}{\gamma(t)}=C\left(\frac{\rho}{t^{1/p}}\right)^{\alpha\lambda}\frac{\mu(B(x_{0}, \rho))}{\mu(B(x_{0}, t^{1/p}))^{\lambda}}.$$ Note that $\gamma$ defined in this way is a regular function by the doubling condition of $\mu$.

Substituting this into (\ref{uxTupper}), we get \begin{align*}||u(\cdot,T)||_{L^{\infty}\left(B(x, \frac{1}{2}T^{1/p})\right)}&\leq C\left(\frac{\rho}{T^{1/p}}\right)^{\alpha}\left(\frac{\mu(B(x_{0}, \rho))}{\mu(B(x_{0}, T^{1/p}))^{\lambda}\mu(B(x, T^{1/p}))}\right)^{1/\lambda}\\&~\times\exp\left(-c\left(\frac{d(x,A)}{T^{1/p}}\right)^{\frac{p}{p-1}}\right).\end{align*}
Using again the doubling condition of $\mu$, we obtain \begin{align*}
\left(\frac{\mu(B(x, C_{0}\rho))}{\mu(B(x_{0}, T^{1/p}))^{\lambda}\mu(B(x, T^{1/p}))}\right)^{1/\lambda}&\leq C\left(\frac{\rho}{T^{1/p}}\right)^{\alpha/\lambda}\frac{1}{\mu(B(x_{0}, T^{1/p}))}\\&\leq C\left(\frac{\rho}{T^{1/p}}\right)^{\alpha+\alpha/\lambda}\frac{1}{\mu(B(x, T^{1/p}))}.\end{align*}
Therefore, $$||u(\cdot,T)||_{L^{\infty}\left(B(x, \frac{1}{2}T^{1/p})\right)}\leq C\left(\frac{\rho}{T^{1/p}}\right)^{2\alpha+\alpha/\lambda}\frac{1}{\mu(B(x, T^{1/p}))}\exp\left(-c\left(\frac{d(x,A)}{T^{1/p}}\right)^{\frac{p}{p-1}}\right).$$
Since $\rho=c^{\prime}d(x, A)$, the term $\left(\frac{\rho}{T^{1/p}}\right)^{2\alpha+\alpha/\lambda}$ cancels with the term $\exp\left(-c\left(\frac{d(x,A)}{T^{1/p}}\right)^{\frac{p}{p-1}}\right)$ so that (\ref{curvature}) holds in the case $d(x, A)>C^{\prime}T^{1/p}$. In the case $d(x, A)\leq C^{\prime}T^{1/p}$, we have $\rho=c^{\prime}C^{\prime}T^{1/p}$ and (\ref{curvature}) follows from using the same arguments. This finishes the proof.
\end{proof}

\begin{remark}
Let $M=\mathbb{R}^{n}$. Then $\mu(B(x, T^{1/p}))\geq c T^{n/p}$ and (\ref{curvature}) gives the same upper bound (up to constants) as in (\ref{barenborder}). Hence, (\ref{curvature}) gives a sharp upper bound in $\mathbb{R}^{n}$. Moreover, the model manifold constructed in Proposition \ref{supersolmodel} satisfies the volume doubling property and the Poincar\'{e} inequality, and in particular, also the relative Faber-Krahn inequality (see Proposition 4.10 in \cite{grigor2005stability}). Hence, we do not only obtain a sharp upper bound in $\mathbb{R}^{n}$, but also in the class of manifolds constructed in Proposition \ref{supersolmodel}.
\end{remark}

The next result contains Theorem \ref{upperCHint}.

\begin{theorem}\label{upperCH}
Let $M$ satisfy the uniform Sobolev inequality and $\textnormal{dim}_{M}=n$. Let $u$ be a non-negative bounded subsolution of (\ref{dtv}) in $M\times [0, \infty)$ and set $A=\textnormal{supp}~u_{0}$. Then, for all $x\in M$ and all large enough $T>0$, \begin{equation}\label{carthar}||u(\cdot,T)||_{L^{\infty}\left(B(x,\frac{1}{2}T^{1/p})\right)}\leq\frac{C}{T^{n/p}}\exp\left(-c\left(\frac{d(x,A)}{T^{1/p}}\right)^{\frac{p}{p-1}}\right),\end{equation} where $c=c(p)>0$ and $C$ depends on $p, n$ and $||u_{0}||_{L^{1}(M)}$.
\end{theorem}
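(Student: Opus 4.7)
The strategy is to follow the blueprint of Theorem \ref{corcurv}: verify the integral hypothesis (\ref{condforgamma}) of Lemma \ref{Gaussup} with a concrete regular function $\gamma$, and then combine the resulting estimate (\ref{uxTupper}) with a suitable lower bound on $\iota(B)\mu(B)$. Under a uniform Sobolev inequality the argument is in fact cleaner than in the relative Faber--Krahn case, because one obtains a universal on-diagonal $L^{\infty}$ estimate directly from the mean value inequality, bypassing any volume-doubling machinery.

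The first substantive step would be to apply the mean value inequality (\ref{v31}) with $\lambda=1$ and $R=T^{1/p}$ to $B=B(x,T^{1/p})$. Using the uniform bound $S_{B}\leq \textnormal{const}$ and $\int_{0}^{T}\!\int_{B}u \leq T\,||u_{0}||_{L^{1}(M)}$ (by Lemma \ref{monl1}), this produces the universal on-diagonal bound $||u(\cdot,t)||_{L^{\infty}(M)} \leq C\,||u_{0}||_{L^{1}(M)}/t^{1/\nu}$. For any fixed $\lambda \geq \max(p,p/(p-1))$, interpolating this $L^{\infty}$ bound with the $L^{1}$ bound then gives $\int_{A_{\rho}}u^{\lambda}(\cdot,t) \leq C\,||u_{0}||_{L^{1}(M)}^{\lambda}/t^{(\lambda-1)/\nu}$, so the hypothesis (\ref{condforgamma}) holds with the regular function $\gamma(t)=c\,t^{(\lambda-1)/\nu}$. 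The uniform Sobolev bound also yields $\iota(B(x,T^{1/p}))\,\mu(B(x,T^{1/p})) = (T/S_{B})^{1/\nu} \gtrsim T^{1/\nu}$. Substituting both into (\ref{uxTupper}) collapses the pre-exponential factor to $\left(C/(T^{1/\nu}\cdot T^{(\lambda-1)/\nu})\right)^{1/\lambda} = C^{1/\lambda}/T^{1/\nu}$, so a sub-Gaussian upper bound with prefactor $T^{-1/\nu}$ holds.

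The main obstacle, and the reason the statement restricts to ``large enough $T$'', is matching the exponent $1/\nu$ produced by this scheme to the claimed $n/p$ in (\ref{carthar}). When $n>p$, definition (\ref{nu}) permits the sharp choice $\nu=p/n$, so $1/\nu=n/p$ exactly and the theorem follows for all $T>0$. When $n\leq p$, only $\nu \in (0,1)$ is available, giving $1/\nu>1\geq n/p$; consequently $T^{-1/\nu}\leq T^{-n/p}$ only once $T \geq 1$. One therefore restricts to $T$ bounded below by a constant, in which regime the sharper bound $T^{-1/\nu}$ automatically implies the stated $T^{-n/p}$ after absorbing the difference into the prefactor constant and keeping the exponential constant $c=c(p)$ unchanged.
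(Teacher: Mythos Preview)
Your proposal is correct and reaches the same endpoint as the paper, but the route to the key integral decay $\int_{M}u^{\lambda}(\cdot,t)\lesssim t^{-(\lambda-1)/\nu}$ is genuinely different. The paper does \emph{not} re-invoke the mean value inequality here; instead it derives a Nash-type inequality from the uniform Sobolev inequality, feeds it into the Caccioppoli inequality to obtain a differential inequality $\Phi'(t)\leq -c\,\Phi(t)^{1+\nu/(\lambda-1)}\|u_{0}\|_{L^{1}}^{-\nu\lambda/(\lambda-1)}$ for $\Phi(t)=\int_{M}u^{\lambda}(\cdot,t)$, and integrates that ODE. Your argument bypasses this entirely: you use Lemma~\ref{LemMoser} with $\lambda=1$ to get the global on-diagonal bound $\|u(\cdot,t)\|_{L^{\infty}(M)}\lesssim t^{-1/\nu}\|u_{0}\|_{L^{1}}$, then interpolate with $\|u(\cdot,t)\|_{L^{1}}\leq\|u_{0}\|_{L^{1}}$ to obtain the same $\gamma$. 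Both approaches then finish identically via the bound $\iota(B)\mu(B)\gtrsim T^{1/\nu}$ and Lemma~\ref{Gaussup}, and your discussion of the exponent matching ($1/\nu$ versus $n/p$) and the role of ``large enough $T$'' coincides with the paper's. Your route is slightly more economical, since it reuses the Moser machinery already in place; the paper's Nash--ODE argument is more self-contained and gives the $L^{\lambda}$ decay directly without passing through $L^{\infty}$.
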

\begin{proof}
Fix $\lambda\geq \max\left(p, \frac{p}{p-1}\right)$. Let us first show the following \textit{Nash type inequality}. For non-negative $w\in W^{1, p}(M)\cap L^{p/\lambda}(M)\setminus\{0\}$, we have \begin{equation}\label{Nashtype}\int_{M}\left|\nabla w\right|^{p}\geq c\left(\int_{M}w^{p}\right)^{1+\frac{\nu}{(\lambda-1)}}\left(\int_{M}w^{p/\lambda}\right)^{-\frac{\nu\lambda}{(\lambda-1)}},\end{equation} where $c=c(p, n)>0$ and $\nu$ is given by (\ref{nu}).

By assumption we have for any non-negative $w\in W^{1, p}(M)$, \begin{equation}\label{sobolevM}\left( \int_{M}w^{p\kappa }\right) ^{1/\kappa }\leq C\int_{M}\left\vert
\nabla w\right\vert ^{p},\end{equation} where $C=C(p, n)>0$ and $\kappa$ is given by (\ref{k}). Note that by Hölder's inequality with exponent $\kappa\left(1+\frac{\nu}{(\lambda-1)}\right)$ and its Hölder conjugate $\omega=\frac{(\lambda-1)+\nu}{\lambda \nu}$, we have \begin{align*}\int_{M}w^{p}&=\int_{M}\left(w^{p\kappa}\right)^{\frac{1}{\kappa\left(1+\frac{\nu}{(\lambda-1)}\right)}}w^{p-\frac{p}{1+\frac{\nu}{(\lambda-1)}}}\\&\leq\left(\int_{M}w^{p\kappa}\right)^{\frac{1}{\kappa\left(1+\frac{\nu}{(\lambda-1)}\right)}}\left(\int_{M}\left(w^{p-\frac{p}{1+\frac{\nu}{(\lambda-1)}}}\right)^{\omega}\right)^{1/\omega},
\end{align*}
whence (\ref{sobolevM}) yields \begin{align*} \left(\int_{M}w^{p}\right)^{1+\frac{\nu}{(\lambda-1)}}&\leq \left(\int_{M}w^{p\kappa}\right)^{\frac{1}{\kappa}}\left(\int_{M}w^{p/\lambda}\right)^{\frac{\nu\lambda}{(\lambda-1)}}\\&\leq C\int_{M}\left\vert
\nabla w\right\vert^{p}\left(\int_{M}w^{p/\lambda}\right)^{\frac{\nu\lambda}{(\lambda-1)}},\end{align*} which proves (\ref{Nashtype}).

Setting $$\Phi(t)=\int_{M }u^{\lambda }(\cdot, t)$$ we see that by the Caccioppoli type inequality (\ref{veta1}), by choosing $\eta$ as a bump function with support in some geodesic ball $B$ and then sending $B\to M$, for $t_{1}<t_{2}$, $$\Phi(t_{2})-\Phi(t_{1})\leq -c_{1}%
\int_{M\times[t_{1}, t_{2}]}\left\vert \nabla \left( u^{\lambda/p } \right) \right\vert
^{p},$$ so that, in particular, $\Phi$ is monotone decreasing. Therefore, $\Phi(t)$ is differentiable for almost every $t>0$. Applying (\ref{Nashtype}) to $w(\cdot, t)=u^{\lambda/p}(\cdot, t)\in W^{1, p}(M)\cap L^{p/\lambda}(M)$ (see (\ref{valpha})) and Lemma \ref{monl1}, we get \begin{align*}
\Phi^{\prime}(t)&\leq -c_{1}%
\int_{M}\left\vert \nabla \left( u^{\lambda/p }(\cdot, t) \right) \right\vert^{p}\\&\leq -c\left(\int_{M}u^{\lambda}(\cdot, t)\right)^{1+\frac{\nu}{(\lambda-1)}}\left(\int_{M}u(\cdot, t)\right)^{-\frac{\nu\lambda}{(\lambda-1)}}\\&\leq-c\Phi(t)^{1+\frac{\nu}{(\lambda-1)}}||u_{0}||_{L^{1}(M)}^{-\frac{\nu\lambda}{(\lambda-1)}}.
\end{align*}
Solving this inequality we get, for almost every $t>0$, $$\Phi(t)\leq \frac{C}{t^{\frac{\lambda-1}{\nu}}}.$$ Noticing that the left hand side of (\ref{condforgamma}) is continuous in $t$, (\ref{condforgamma}) holds true for all $t>0$ with \begin{equation}\label{uppergamma}\gamma(t)=C^{-1}t^{\frac{\lambda-1}{\nu}}.\end{equation}
By (\ref{defiota}), we have $$\iota(B(x, T^{1/p}))=\frac{1}{\mu(B(x, T^{1/p}))}\left(\frac{T}{S_{B}}\right)^{1/\nu}\geq \frac{cT^{1/\nu}}{\mu(B(x, T^{1/p}))}.$$
Therefore, substituting this and (\ref{uppergamma}) into (\ref{uxTupper}), we deduce $$||u(\cdot,T)||_{L^{\infty}\left(B(x,\frac{1}{2}T^{1/p})\right)}\leq\left(\frac{C}{T^{ \lambda/\nu}}\right)^{\frac{1}{\lambda}}\exp\left(-c\left(\frac{d(x,A)}{T^{1/p}}\right)^{\frac{p}{p-1}}\right),$$ which proves (\ref{carthar}), since $\nu\leq \frac{p}{n}$ by (\ref{nu}).
\end{proof}

\section{Appendix}

\label{appendixla}

\subsection{Radial solution on polynomial models}

Let $M$ be a model manifold, that is $M=(0, +\infty)\times \mathbb{S}^{n-1}$ as topological spaces and $M$ is equipped with the Riemannian metric $ds^{2}$ given by \begin{equation*}ds^{2}=dr^{2}+\psi^{2}(r)d\theta^{2},\end{equation*} where $\psi(r)$ is a smooth positive function on $(0, +\infty)$ and $d\theta^{2}$ is the standard Riemannian metric on $\mathbb{S}^{n-1}$. We define $S(r)=\psi^{n-1}(r)$, which is called the profile of the model manifold.

We search for solutions $u$ of the Trudinger equation (\ref{evoeq1}).
Let $u(x, t)=u(r, t)$, that is, function $u$ depends only on the polar radius $r$ and time $t$. Assume also that $\partial_{r}u\leq 0$, then $$\Delta_{p}u=-\frac{1}{S}\partial _{r}\left( S\left( -\partial
_{r}u\right) ^{p-1}\right)$$ so that (\ref{dtv}) becomes \begin{equation}\label{evoeqmodel}\partial _{t}u=-\frac{1}{S}\partial _{r}\left( S\left( -\partial
	_{r}u^{\frac{1}{p-1}}\right) ^{p-1}\right).\end{equation}

\begin{proposition}\label{supersolmodel}
	Assume that, for some $\alpha \in (0,n]$ and all $r\geq r_{0}$,
	\begin{equation*}
		S\left( r\right) =Cr^{\alpha -1}.
	\end{equation*}
	Then the following function is a non-negative
	solution of (\ref{dtv}) in $M\setminus B_{r_{0}}\times \mathbb{R}_{+}$: 
	\begin{equation}
		u\left( x,t\right) =\frac{1}{t^{\alpha /p }}\exp\left( -\zeta \left( \frac{%
			r}{t^{1/p }}\right) ^{\frac{p}{p-1}}\right), \label{ua}
	\end{equation}%
	where $\zeta=(p-1)^{2}p^{-\frac{p}{p-1}}$.
\end{proposition}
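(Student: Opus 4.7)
The plan is a direct substitution into the reduced radial equation (\ref{evoeqmodel}). Writing $\beta = \frac{p}{p-1}$ and $y = r/t^{1/p}$, the ansatz becomes $u = t^{-\alpha/p} e^{-\zeta y^{\beta}}$, and a routine differentiation in $t$ yields
$$\partial_t u = \frac{u}{t}\left(-\frac{\alpha}{p} + \frac{\zeta \beta}{p}\, y^{\beta}\right).$$
That handles the left-hand side. For the right-hand side I would compute $u^{1/(p-1)} = t^{-\alpha/(p(p-1))} e^{-\tilde\zeta y^{\beta}}$ with $\tilde\zeta = \zeta/(p-1) = (p-1) p^{-p/(p-1)}$, and then
$$-\partial_r u^{1/(p-1)} = \tilde\zeta\, \beta\, t^{-1/p}\, y^{\beta - 1}\, u^{1/(p-1)},$$
which is non-negative (so raising it to the $(p-1)$st power is unambiguous and preserves sign, justifying the form of $\Delta_p$ used in (\ref{evoeqmodel})).

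Next I would raise to the $(p-1)$st power, using the crucial exponent identity $(\beta - 1)(p-1) = 1$ to collapse $y^{(\beta-1)(p-1)}$ into just $y$. This gives
$$(-\partial_r u^{1/(p-1)})^{p-1} = (\tilde\zeta \beta)^{p-1}\, t^{-(p-1)/p}\, y\, u.$$
At this step the choice $\zeta = (p-1)^{2} p^{-p/(p-1)}$ is precisely calibrated so that $\tilde\zeta \beta = p^{-1/(p-1)}$ and hence $(\tilde\zeta \beta)^{p-1} = 1/p$. Multiplying by $S(r) = C r^{\alpha - 1}$ and using $y = r/t^{1/p}$ then collapses the expression to $\frac{C}{p\, t}\, r^{\alpha}\, u$, with the $C$ still present so it can cancel later.

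Differentiating this in $r$ produces two terms: one from $\partial_r r^{\alpha}$ contributing $\alpha$, and one from $\partial_r u = -\zeta \beta\, t^{-1/p}\, y^{\beta - 1}\, u$ contributing $-\zeta \beta\, y^{\beta}$. Dividing by $-S = -Cr^{\alpha-1}$, the factor $Cr^{\alpha - 1}$ cancels exactly, and one obtains
$$-\frac{1}{S}\partial_r\bigl(S(-\partial_r u^{1/(p-1)})^{p-1}\bigr) = \frac{u}{t}\left(-\frac{\alpha}{p} + \frac{\zeta \beta}{p}\, y^{\beta}\right),$$
which matches $\partial_t u$ term by term.

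The main obstacle is not conceptual but bookkeeping: tracking all the powers of $t$, $r$, and $y$ and confirming the two small algebraic identities $(\beta - 1)(p-1) = 1$ and $(\tilde\zeta \beta)^{p-1} = 1/p$, as these are exactly what select the constant $\zeta$. I would also point out that the computation only uses the explicit form $S(r) = C r^{\alpha - 1}$ on the set $\{r \geq r_{0}\}$, which is why the conclusion is stated on $M \setminus B_{r_{0}} \times \mathbb{R}_{+}$, and that the multiplicative constant $C$ in $S$ drops out at the final division, so its value is immaterial.
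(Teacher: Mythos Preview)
Your proof is correct. The computation checks out cleanly, including the two key identities $(\beta-1)(p-1)=1$ and $(\tilde\zeta\beta)^{p-1}=1/p$.

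The paper takes a different route: instead of verifying the given formula, it \emph{derives} it via a self-similar ansatz $u(x,t)=t^{a}f(rt^{b})$. Requiring bounded $L^{1}$-mass forces $a=\alpha b$, and matching the powers of $t$ on both sides of the reduced equation forces $b=-1/p$; the PDE then collapses to the first-order ODE $f'/f=-(p-1)(s/p)^{1/(p-1)}$, whose integration yields exactly $f(s)=\exp(-\zeta s^{p/(p-1)})$ with the stated $\zeta$. Your direct substitution is shorter and entirely sufficient for the proposition as stated, since the formula is already given; the paper's ansatz-and-ODE argument has the complementary virtue of explaining where the exponents $-\alpha/p$, $-1/p$ and the constant $\zeta$ come from, rather than verifying them after the fact.
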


\begin{proof}
	By (\ref{evoeqmodel}) the equation (\ref{dtv}) for $u$ becomes for $r>r_{0}$, 
	\begin{equation}\label{uS'S}
		\partial _{t}u=-\frac{1}{r^{\alpha -1}}\partial _{r}\left( r^{\alpha
			-1}\left( -\partial _{r}u^{\frac{1}{p-1}}\right) ^{p-1}\right).
	\end{equation}%
	We search for a solution of the form%
	\begin{equation*}
		u\left( x,t\right) =t^{a}f\left( rt^{b}\right) \ \ \text{for large }r\text{,}
	\end{equation*}%
	where $f$ is a decreasing function. Let us require in addition that the
	solution $u\left( \cdot ,t\right) $ has bounded $L^{1}$-norm. One can show that for that we need to require that $a=\alpha b$.
	Using the variable $s=rt^{b}$, we obtain that 
	(\ref{uS'S}) is equivalent to%
	\begin{align*}
		\frac{bt^{a-1}}{s^{\alpha -1}}\left( s^{\alpha }f\left( s\right) \right)
		^{\prime }=-\frac{t^{\left( \frac{a}{p-1}+b\right) \left( p-1\right) }}{(p-1)^{p-1}s^{\alpha -1}}%
		t^{b}\partial _{s}\left( s^{\alpha -1}\left( -f(s)^{\frac{1}{p-1}-1}f^{\prime
		}(s)\right) ^{p-1}\right) .
	\end{align*}%
	Let us also require that%
	\begin{equation*}
		\left( \frac{a}{p-1}+b\right) \left( p-1\right) +b=a-1,
	\end{equation*}%
	which yields $b=-\dfrac{1}{p}<0$.
	Under the above choice of $a$ and $b$, the powers of $t$ and $s$ in the
	above equation cancel out, and we obtain since $b<0$,
	\begin{equation}
		\frac{f^{\prime}}{f}=-(p-1)\left( \frac{1}{p} s\right) ^{\frac{1}{p-1}}.  \label{ODE1}
	\end{equation}%
	Integration of (\ref{ODE1}) yields
	\begin{equation*}
		f\left( s\right) =\exp\left( -\zeta s^{\frac{p}{p-1}}\right)
	\end{equation*}%
	where
	$\zeta =\frac{(p-1)^{2}}{p^{\frac{p}{p-1}}}$ which proves the claim.
\end{proof}

\subsection{An auxiliary lemma}

\begin{lemma}[\cite{grigor2023finite}]
\label{LemJk}Let a sequence $\left\{ J_{k}\right\} _{k=0}^{\infty }$ of
non-negative reals satisfy%
\begin{equation}
J_{k+1}\leq \frac{A^{k}}{\Theta }J_{k}^{1+\omega }\ \ \text{for all }k\geq 0.
\label{Jk+1}
\end{equation}%
where $A,\Theta ,\omega >0.$ Then, for all $k\geq 0$,%
\begin{equation}
J_{k}\leq \left( \frac{J_{0}}{\left( A^{-1/\omega }\Theta \right) ^{1/\omega
}}\right) ^{\left( 1+\omega \right) ^{k}}\left( A^{-k-1/\omega }\Theta
\right) ^{1/\omega }.  \label{Jk<}
\end{equation}
In particular, if 
\begin{equation}
\Theta \geq A^{1/\omega }J_{0}^{\omega },  \label{T>}
\end{equation}%
then, for all $k\geq 0,$%
\begin{equation}
J_{k}\leq A^{-k/\omega }J_{0}.  \label{JkJ0}
\end{equation}
\end{lemma}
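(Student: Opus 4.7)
My plan is to renormalize the sequence by a $k$-dependent geometric factor so that the recursion collapses to the pure power form $K_{k+1} \leq K_k^{1+\omega}$, which then iterates trivially. Concretely, I would set $K_k = \alpha^k \beta J_k$ for positive constants $\alpha, \beta$ to be chosen, and substitute into (\ref{Jk+1}) to obtain
$$K_{k+1} \leq \alpha^{1-k\omega}\,\beta^{-\omega}\,A^k\,\Theta^{-1}\,K_k^{1+\omega}.$$
For the prefactor to equal $1$ identically in $k$, the coefficient of $k$ in the exponent must vanish, which forces $\alpha^{\omega} = A$, i.e.\ $\alpha = A^{1/\omega}$; the remaining constant part then forces $\beta^{\omega} = \alpha/\Theta$, i.e.\ $\beta = A^{1/\omega^2}\Theta^{-1/\omega}$. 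With these choices the recursion reads simply $K_{k+1} \leq K_k^{1+\omega}$.

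An immediate induction on $k$ yields $K_k \leq K_0^{(1+\omega)^k}$ for every $k\geq 0$. Unwinding the substitution gives $J_k \leq \alpha^{-k}\beta^{-1}\,(\beta J_0)^{(1+\omega)^k}$. Since $\beta^{-1} = A^{-1/\omega^2}\Theta^{1/\omega} = (A^{-1/\omega}\Theta)^{1/\omega}$ and $\alpha^{-k}\beta^{-1} = A^{-k/\omega-1/\omega^2}\Theta^{1/\omega} = (A^{-k-1/\omega}\Theta)^{1/\omega}$, the bound rewrites as exactly (\ref{Jk<}).

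For the \emph{in particular} statement, I would observe that the hypothesis (\ref{T>}), namely $\Theta \geq A^{1/\omega}J_0^{\omega}$, is equivalent to $\beta J_0 \leq 1$, i.e.\ $K_0 \leq 1$. Since $(1+\omega)^k \geq 1$ and $K_0 \in [0,1]$, raising to this power only makes it smaller, so $K_k \leq K_0^{(1+\omega)^k} \leq K_0$ for all $k$; unwinding yields $J_k \leq \alpha^{-k}\beta^{-1} K_0 = \alpha^{-k} J_0 = A^{-k/\omega}J_0$, which is (\ref{JkJ0}). There is no genuine obstacle: the argument is a two-line induction once the right substitution is made. The only care required is bookkeeping the interplay between the exponents $1/\omega$ and $1/\omega^2$ so that the prefactors in (\ref{Jk<}) emerge in precisely the stated form.
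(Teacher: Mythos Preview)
Your proof is correct. The substitution $K_k=\alpha^k\beta J_k$ with $\alpha=A^{1/\omega}$, $\beta=A^{1/\omega^2}\Theta^{-1/\omega}$ reduces the recursion to $K_{k+1}\le K_k^{1+\omega}$, the induction $K_k\le K_0^{(1+\omega)^k}$ is valid for all $K_0\ge 0$ since $x\mapsto x^{1+\omega}$ is increasing on $[0,\infty)$, and your unwinding of the exponents to recover \eqref{Jk<} and \eqref{JkJ0} is accurate. Note that the paper does not supply its own proof of this lemma but simply cites it from \cite{grigor2023finite}; your argument is the standard renormalization for this type of iteration and would serve as a complete proof.
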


\bibliographystyle{abbrv}
\bibliography{librarycacc}

\begin{thebibliography}{10}

\bibitem{andreucci2021asymptotic}
D.~Andreucci and A.~F. Tedeev.
\newblock Asymptotic properties of solutions to the {C}auchy problem for
  degenerate parabolic equations with inhomogeneous density on manifolds.
\newblock {\em Milan Journal of Mathematics}, 89(2):295--327, 2021.

\bibitem{andreucci2021extinction}
D.~Andreucci and A.~F. Tedeev.
\newblock Extinction in a finite time for parabolic equations of fast diffusion
  type on manifolds.
\newblock In {\em Operator theory and differential equations}, pages 1--6.
  Springer, 2021.

\bibitem{Aronson1967BoundsFT}
D.~G. Aronson.
\newblock Bounds for the fundamental solution of a parabolic equation.
\newblock {\em Bulletin of the American Mathematical Society}, 73:890--896,
  1967.

\bibitem{aronson1968non}
D.~G. Aronson.
\newblock Non-negative solutions of linear parabolic equations.
\newblock {\em Annali della Scuola Normale Superiore di Pisa-Classe di
  Scienze}, 22(4):607--694, 1968.

\bibitem{barenblatt1952self}
G.~I. Barenblatt.
\newblock On self-similar motions of a compressible fluid in a porous medium.
\newblock {\em Akad. Nauk SSSR. Prikl. Mat. Meh}, 16(6):679--698, 1952.

\bibitem{barlow2006diffusions}
M.~T. Barlow.
\newblock Diffusions on fractals.
\newblock {\em Lectures on Probability Theory and Statistics: Ecole d'Et{\'e}
  de Probabilit{\'e}s de Saint-Flour XXV—1995}, pages 1--121, 2006.

\bibitem{bonforte2005asymptotics}
M.~Bonforte and G.~Grillo.
\newblock Asymptotics of the porous media equation via {S}obolev inequalities.
\newblock {\em Journal of Functional Analysis}, 225(1):33--62, 2005.

\bibitem{bonforte2008fast}
M.~Bonforte, G.~Grillo, and J.~L. Vazquez.
\newblock Fast diffusion flow on manifolds of nonpositive curvature.
\newblock {\em Journal of Evolution Equations}, 8:99--128, 2008.

\bibitem{Buser}
P.~Buser.
\newblock A note on the isoperimetric constant.
\newblock {\em Ann. Sci. Ecole Norm. Sup.}, 15:213--230, 1982.

\bibitem{carron1996inegalites}
G.~Carron.
\newblock {In{\'e}galit{\'e}s isop{\'e}rim{\'e}triques de {F}aber-{K}rahn et
  cons{\'e}quences}.
\newblock In {\em Actes de la table ronde de g{\'e}om{\'e}trie
  diff{\'e}rentielle (Luminy, 1992)”, Collection SMF S{\'e}minaires et
  Congres}, volume~1, pages 205--232, 1996.

\bibitem{Coulhon1997}
T.~Coulhon and A.~Grigor'yan.
\newblock On-diagonal lower bounds for heat kernels and {M}arkov chains.
\newblock {\em Duke Math. J.}, 89:133--199, 1997.

\bibitem{coulhon1998random}
T.~Coulhon and A.~Grigoryan.
\newblock Random walks on graphs with regular volume growth.
\newblock {\em Geometric and Functional Analysis}, 8(4):656--701, 1998.

\bibitem{davies1992heat}
E.~B. Davies.
\newblock Heat kernel bounds, conservation of probability and the feller
  property.
\newblock {\em Journal d’Analyse Math{\'e}matique}, 58(1):99--119, 1992.

\bibitem{de2022wasserstein}
N.~De~Ponti, M.~Muratori, and C.~Orrieri.
\newblock Wasserstein stability of porous medium-type equations on manifolds
  with {R}icci curvature bounded below.
\newblock {\em Journal of Functional Analysis}, 283(9):109661, 2022.

\bibitem{dekkers2005finite}
S.~Dekkers.
\newblock Finite propagation speed for solutions of the parabolic $p$-laplace
  equation on manifolds.
\newblock {\em Communications in Analysis and Geometry}, 13(4):741--768, 2005.

\bibitem{del2004nonlinear}
M.~Del~Pino, J.~Dolbeault, and I.~Gentil.
\newblock Nonlinear diffusions, hypercontractivity and the optimal lp-euclidean
  logarithmic sobolev inequality.
\newblock {\em Journal of Mathematical Analysis and Applications},
  293(2):375--388, 2004.

\bibitem{dibenedetto2011harnack}
E.~DiBenedetto, U.~P. Gianazza, and V.~Vespri.
\newblock {\em Harnack's inequality for degenerate and singular parabolic
  equations}.
\newblock Springer Science \& Business Media, 2011.

\bibitem{grigor}
A.~Grigor'yan.
\newblock The heat equation on non-compact {R}iemannian manifolds.
\newblock {\em Math. USSR Sb.}, 72:47--77, 1992.

\bibitem{grigor1994heat}
A.~Grigor'yan.
\newblock Heat kernel upper bounds on a complete non-compact manifold.
\newblock {\em Revista Matem{\'a}tica Iberoamericana}, 10(2):395--452, 1994.

\bibitem{Grigoryan2012}
A.~Grigor'yan.
\newblock {\em Heat Kernel and Analysis on Manifolds}.
\newblock American Mathematical Society, nov 2012.

\bibitem{grigor2005stability}
A.~Grigor'yan and L.~Saloff-Coste.
\newblock Stability results for harnack inequalities.
\newblock In {\em Annales de l'institut Fourier}, volume~55, pages 825--890,
  2005.

\bibitem{grigor2009heat}
A.~Grigor'yan and L.~Saloff-Coste.
\newblock Heat kernel on manifolds with ends.
\newblock In {\em Annales de l'institut Fourier}, volume~59, pages 1917--1997,
  2009.

\bibitem{grigor2016surgery}
A.~Grigor'yan and L.~Saloff-Coste.
\newblock Surgery of the {F}aber--{K}rahn inequality and applications to heat
  kernel bounds.
\newblock {\em Nonlinear Analysis}, 131:243--272, 2016.

\bibitem{grigor1997gaussian}
A.~Grigor’yan.
\newblock Gaussian upper bounds for the heat kernel on arbitrary manifolds.
\newblock {\em J. Diff. Geom}, 45(1):33--52, 1997.

\bibitem{grigor1999estimates}
A.~Grigor’yan.
\newblock Estimates of heat kernels on {R}iemannian manifolds.
\newblock {\em London Math. Soc. Lecture Note Ser}, 273:140--225, 1999.

\bibitem{grigor2022volume}
A.~Grigor’yan and P.~S{\"u}rig.
\newblock Volume growth and on-diagonal heat kernel bounds on {R}iemannian
  manifolds with an end.
\newblock {\em Potential Analysis}, pages 1--33, 2022.

\bibitem{grigor2023finite}
A.~Grigor’yan and P.~S{\"u}rig.
\newblock Finite propagation speed for {L}eibenson’s equation on {R}iemannian
  manifolds.
\newblock {\em to appear in Comm. Anal. Geom.}, 2023.

\bibitem{Grigor’yan2024}
A.~Grigor’yan and P.~S{\"u}rig.
\newblock Sharp propagation rate for {L}eibenson’s equation on {R}iemannian
  manifolds.
\newblock {\em preprint}, 2024.

\bibitem{grillo2006super}
G.~Grillo and M.~Bonforte.
\newblock Super and ultracontractive bounds for doubly nonlinear evolution
  equations.
\newblock {\em Revista matem{\'a}tica iberoamericana}, 22(1):111--129, 2006.

\bibitem{grillo2016smoothing}
G.~Grillo and M.~Muratori.
\newblock Smoothing effects for the porous medium equation on
  {C}artan--{H}adamard manifolds.
\newblock {\em Nonlinear Analysis}, 131:346--362, 2016.

\bibitem{grillo2018porous}
G.~Grillo, M.~Muratori, and F.~Punzo.
\newblock The porous medium equation with measure data on negatively curved
  {R}iemannian manifolds.
\newblock {\em Journal of the European Mathematical Society},
  20(11):2769--2812, 2018.

\bibitem{hoffman1974sobolev}
D.~Hoffman and J.~Spruck.
\newblock Sobolev and isoperimetric inequalities for {R}iemannian submanifolds.
\newblock {\em Communications on Pure and Applied Mathematics}, 27(6):715--727,
  1974.

\bibitem{ishige1996existence}
K.~Ishige.
\newblock On the existence of solutions of the cauchy problem for a doubly
  nonlinear parabolic equation.
\newblock {\em SIAM Journal on Mathematical Analysis}, 27(5):1235--1260, 1996.

\bibitem{ivanov1997regularity}
A.~V. Ivanov.
\newblock Regularity for doubly nonlinear parabolic equations.
\newblock {\em Journal of Mathematical Sciences}, 83(1):22--37, 1997.

\bibitem{ladyzhenskaya1968linear}
O.~Ladyzhenskaya, V.~Solonnikov, and N.~Ural’tseva.
\newblock Linear and quasilinear equations of parabolic type, transl. math.
\newblock {\em Monographs, Amer. Math. Soc}, 23, 1968.

\bibitem{Li1986OnTP}
P.~Li and S.-T. Yau.
\newblock On the parabolic kernel of the {S}chr{\"o}dinger operator.
\newblock {\em Acta Mathematica}, 156:153--201, 1986.

\bibitem{Moser}
J.~Moser.
\newblock Harnack inequality for parabolic differential equations.
\newblock {\em Comm. Pure Appl. Math.}, 17:101--134, 1964.

\bibitem{raviart1970resolution}
P.-A. Raviart.
\newblock Sur la r{\'e}solution de certaines {\'e}quations paraboliques non
  lin{\'e}aires.
\newblock {\em Journal of Functional Analysis}, 5(2):299--328, 1970.

\bibitem{Saloff}
L.~Saloff-Coste.
\newblock {\em Aspects of Sobolev-type inequalities}.
\newblock LMS Lecture Notes Series, vol. 289. Cambridge Univ. Press, 2002.

\bibitem{sturm2017existence}
S.~Sturm.
\newblock Existence of weak solutions of doubly nonlinear parabolic equations.
\newblock {\em Journal of Mathematical Analysis and Applications},
  455(1):842--863, 2017.

\bibitem{trudinger1968pointwise}
N.~S. Trudinger.
\newblock Pointwise estimates and quasilinear parabolic equations.
\newblock {\em Communications on Pure and Applied Mathematics}, 21(3):205--226,
  1968.

\bibitem{vazquez2015fundamental}
J.~L. V{\'a}zquez.
\newblock Fundamental solution and long time behavior of the porous medium
  equation in {H}yperbolic space.
\newblock {\em Journal de Math{\'e}matiques Pures et Appliqu{\'e}es},
  104(3):454--484, 2015.

\end{thebibliography}

\emph{Universit\"{a}t Bielefeld, Fakult\"{a}t f\"{u}r Mathematik, Postfach
	100131, D-33501, Bielefeld, Germany}

\texttt{philipp.suerig@uni-bielefeld.de}

\end{document}